\documentclass[article]{elsarticle}
\usepackage{hyperref}
\usepackage{amsmath,amsxtra,amssymb,latexsym,epsfig,amscd,amsthm,fancybox,epsfig}
\usepackage[mathscr]{eucal}
\usepackage{graphicx}
\usepackage{epsfig} 
\usepackage{epstopdf}
\usepackage{cases}
\usepackage{subfig}
\usepackage{color}


%


\newtheorem{thm}{Theorem}[section]
\newtheorem {asp}{Assumption}[section]
\newtheorem{lm}{Lemma}[section]

\newtheorem{deff}{Definition}[section]

\newtheorem{prop}{Proposition}[section]
\theoremstyle{definition}

\theoremstyle{remark}
\newtheorem{rem}{Remark}

\numberwithin{equation}{section}


\newcommand{\eps}{\varepsilon}

\newcommand{\A}{\mathcal{A}}

\newcommand{\C}{\mathcal{C}}

\newcommand{\M}{\mathcal{M}}
\newcommand{\F}{\mathcal{F}}

\newcommand{\E}{\mathbb{E}}

\newcommand{\N}{\mathbb{N}}

\newcommand{\PP}{\mathbb{P}}

\newcommand{\K}{\mathcal{K}}
\newcommand{\U}{\mathcal{U}}

\newcommand{\R}{\mathbb{R}}

\newcommand{\Lom}{\mathcal{L}}

\newcommand{\abs}[1]{\left\vert#1\right\vert}

\newcommand{\norm}[1]{\left\Vert#1\right\Vert}

\renewcommand{\vec}[1]{\mathbf{#1}}

\newcommand{\bdelta}{\boldsymbol{\delta}}
\newcommand{\Vphi}{\boldsymbol\varphi}
\newcommand{\Vrho}{\boldsymbol\rho}
\newcommand{\Bphi}{\boldsymbol\phi}
\numberwithin{equation}{section}


\newcommand{\bed}{\begin{displaymath}}
\newcommand{\eed}{\end{displaymath}}
\newcommand{\bea}{\bed\begin{array}{rl}}
\newcommand{\eea}{\end{array}\eed}

\newcommand{\barray}{\begin{array}{ll}}
\newcommand{\earray}{\end{array}}
\newcommand{\diag}{{\rm diag}}
\newcommand{\conv}{{\rm Conv}}

\newcommand{\1}{\boldsymbol{1}}

\def\bar{\overline}
\def\hat{\widehat}
\def\wdt{\widetilde}

\def\a.s{\text{\;a.s.\;}}

\def\bdelta{\boldsymbol{\delta}}
\def\Ephi{\E_{\Bphi}}
\def\PPphi{\PP_{\Bphi}}
\newcommand{\ko}{Kolmogorov }

\newcommand{\DD}{\mathbb{D}}


\begin{document}
\begin{frontmatter}
	
	\title{Stochastic Functional Kolmogorov Equations II: Extinction\tnoteref{mytitlenote}}
	\tnotetext[mytitlenote]{The research of D. H. Nguyen was supported in part by the National Science Foundation under grant under grant DMS-1853467. 
		The research of N. N. Nguyen and G. Yin was supported in part by the National Science Foundation under grant under grant DMS-2114649.}
	\author[myaddress1]{Dang H. Nguyen}
	\ead{dangnh.maths@gmail.com.}
	\author[myaddress]{Nhu N. Nguyen}
	\ead{nguyen.nhu@uconn.edu}
	\author[myaddress]{George Yin\corref{mycorrespondingauthor}}
	\cortext[mycorrespondingauthor]{Corresponding author}
	\ead{gyin@uconn.edu}
	
	\address[myaddress1]{Department of Mathematics, University of Alabama, Tuscaloosa, AL
		35487, USA}
	\address[myaddress]{Department of Mathematics, University of Connecticut, Storrs, CT
		06269, USA}
\begin{abstract}
This work, Part II, together with its companion Part I
develops a new framework
for stochastic functional Kolmogorov equations, which are nonlinear stochastic differential equations depending on the current as well as the past states. Because of the complexity of the problems, it is natural to divide our contributions into two parts to answer a long-standing question in biology and ecology. What are the minimal conditions for long-term persistence and extinction of a population?
Part I of our work provides characterization of persistence, whereas in this part, extinction is the main focus.
The main techniques used in this paper are combination of the newly developed functional It\^o formula and a
dynamical system approach.
Compared to the study of stochastic Kolmogorov systems without delays, the main difficulty is that infinite dimensional systems have to be treated. The extinction is characterized after investigating random occupation measures and examining behavior of functional systems around boundaries. Our characterizations of long-term behavior of the systems reduces to that of Kolmogorov systems without delay when there is no past dependence. A number of applications are also examined.

 \bigskip
 \noindent {\bf Keywords.} Stochastic functional equation, Kolmogorov system, ecological and biological application, invariant measure, extinction, persistence.

 \bigskip
 \noindent {\bf  2010 Mathematics Subject Classification.} 34K50,
	60J60, 60J70, 92B99.

 \bigskip
 \noindent {\bf Running Title.} Stochastic Functional Kolmogorov Equations

\end{abstract}
\end{frontmatter}
\tableofcontents

\section{Introduction}
Motivated by a wide variety of applications in ecology and biology, we aim to develop a new framework
 of stochastic functional \ko equations.
To keep our work with a manageable length, we divide the contributions to two parts.
 We aim to answer the long-standing
question of fundamental importance pertaining to biology and ecology: What are the minimal (necessary and sufficient) conditions for long-term persistence and extinction of a population?
This work, Part II, focuses on extinction, whereas its companion Part I concentrates on persistence.
  The extinction and persistence are phenomena
go far beyond biological and ecological systems.
  Such long-term properties are shared by all processes of \ko type.
 One of the main difficulties is that we need to deal with
  infinite dimensional processes.

Taking random fluctuations of the environment into consideration,
 a stochastic \ko differential equation is given by
\begin{equation}\label{kol-2d-ran}
dx_i(t)=x_i(t) f_i(x_1(t),\dots,x_n(t))dt+x_i(t)g_i(x_1(t),\dots x_n(t))dB_i(t),\quad i=1,\dots,n,
\end{equation}
where $B_1(t),\dots,B_n(t)$ are $n$ real-valued Brownian motions
(independent or not).
Such
stochastic \ko system are used extensively in the modeling and analysis of ecological and biological systems such as Lotka-Volterra
predator-prey models, Lotka-Volterra competitive model, replicator dynamic systems, stochastic epidemic models, stochastic tumor-immune system, and stochastic chemostat models, among others.
Apart from ecological and biological systems,
 numerous problems arising in mathematical physics, statistical mechanics, and many related fields, use \ko nonlinear stochastic differential equations. For example, the
  generalized Ginzburg-Landau equation
 is used for
bistable systems, chemical turbulence, phase transitions in non-equilibrium systems,
 among other.
Because of its prevalence in applications, \ko equations \eqref{kol-2d-ran} have  attracted much attention in the past decades;
substantial progress has been made.
To proceed, let us briefly recall some important works of the developments to date.
Some
early mathematical formulations
were introduced by
Verhulst \cite{Ver38} for logistic models,
by Lotka and Volterra \cite{Lot25,Vol26} for Lotka-Volterra systems, and
by Kermack and McKendrick \cite{Ker27,Ker32} for infectious diseases.
 By now, \ko
  stochastic population systems (using stochastic differential equations or difference equations)
 together with their longtime behavior have been relatively well understood; see
\cite{BL16, RS14, SBA11} for  \ko stochastic systems in compact domains and
\cite{Ben14, HN18} for  \ko systems in non-compact domains,
\cite{Die16, DN18,Lou,NNY19,DYZ20,TNY20} for variants of \ko systems such as epidemic models, migration and spatial heterogeneity on single and multiple species,
chemostat models,  tumor-immune system,
and \cite{Imh05,Imh09} for replicator dynamics.
For the most recent
 developments and substantial progress, we refer to
Bena\"im \cite{Ben14},
Henning and Nguyen
\cite{HN18}, Schreiber and Bena\" im \cite{SBA11}, and references therein.

In contrast to existing works, our work in this paper and its companion \cite{NNY21} aim to
substantially advance the existing literature for
  a class of $n$-dimensional stochastic functional \ko systems allowing
 delay and past dependence, so as to provide essential utility to a wide range of applications.
 Clearly, the delays or past dependence are unavoidable natural phenomena in dynamical systems;  the framework of
 stochastic functional differential equations
 is more realistic, more effective, and more general
for the population dynamics in real life
than a stochastic differential equation counterpart.
In population dynamics, some  delay mechanisms studied in the literature include age structure, feeding times, replenishment or regeneration time for resources  \cite{JC13}.
Although there are many excellent treatises of \ko stochastic differential equations, the work on \ko stochastic differential equations with delay is relatively scarce.
In addition, other than the specific models and applications treated, there has not been a unified framework
and a systematic treatment for \ko stochastic functional differential systems yet, and
most of the existing results involving delay are not as sharp as desired.
New methods and techniques need to be developed to carry out the analysis.
There is a strong motivation and pressing need
to develop a unified framework for stochastic functional \ko systems.

 While the models with functional stochastic differential equations are more realistic and more general, the analysis of such systems is far more difficult.
Perhaps, part of the difficulties in studying stochastic delay systems is that there had been virtually no bona fide
operators and functional
It\^o formulas except some general setup in a Banach space such as \cite{SE86} before 2009.
In \cite{Dup09}, Dupire generalized the It\^o formula to a functional setting by using  pathwise functional derivatives.
The It\^o formula  developed has substantially eased the difficulties and encouraged subsequent development with numerous applications.
His work was developed further by Cont and Fourni\'e \cite{CF10, CF13}.
Using the newly developed functional It\^o formula
enables us to analyze effectively the segment processes in
the stochastic functional \ko equations.
Moreover, while the solutions of stochastic differential equations (without delays) are Markovian processes, the solutions of stochastic differential equations with delay is non-Markov. One  uses the so-called segment processes for the delay equations. However, such segment processes live in an infinite dimensional space. Many of the known results in the usual stochastic differential equation setup  are no longer applicable.  Because \ko is highly nonlinear, analyzing such systems with delay becomes even more difficult.

Our goal in this paper,
 is to characterize  the long-term behavior
focusing on extinction.
The results of this paper combined with the persistence presented in the first part \cite{NNY21} provides a complete long-term characterization for the stochastic functional \ko system.
We show that our results will cover, improve, and outperform
 many existing results of \ko systems (with and without delays) such
as the study on stochastic delay Lotka-Volterra competitive models
\cite{Mao04,Liu17}, the work on
stochastic delay Lotka-Volterra predator-prey models \cite{Gen17,Liu13,Liu17-dcds,Liu17-1,Wu19}, the treatment of
stochastic delay epidemic SIR models \cite{Che09,QLiu16-1,QLiu16,QLiu16-2,Mah17},
and the study on
stochastic delay chemostat models \cite{Sun18,Sun18-1,Zha19}
and the delayed replicator models.
It should be mentioned that
 for replicator dynamics,  it seems to be no investigation of stochastic delay systems to date to the best of our knowledge.
It is also worth noting that our sufficient conditions for extinction in this part and the conditions of persistence in \cite{NNY21}) are almost necessary in the sense that excluding critical cases, if the system is not persistent, the extinction will happen and vice versa.

By combining the newly developed functional It\^o formula and the dynamical system point of view,
we advance knowledge to treat infinite dimensional systems.
Characterization of the extinction is obtained after investigating the random occupation measures and examining carefully the behavior of a functional system around the boundary.
It is worth noting that
handling random occupation measures in an infinite dimensional space to obtain the tightness and its limit is far more challenging.
Examining the
behavior of solutions near the boundary for functional \ko systems requires more delicate analysis than
that for systems without delay
because
even if
the state is close to the boundary, its history may not be.

The rest of the paper is organized as follows.
Section \ref{sec:res} presents the formulation of our problem and
main results of
stochastic functional \ko systems.
Section \ref{sec:key} recalls basic properties of \ko equations with delays, including
 well-posedness of the system,
 positivity of solutions,
 which have been proved in  \cite{NNY21}.
We then study
the tightness of families of random occupation measures and the convergence properties.
Next, Section \ref{sec:ext} studies the conditions for extinction
 of \ko systems.
Finally, Section \ref{sec:app} provides
several
applications involving \ko dynamical systems and detailed accounts on how to use our results on stochastic functional \ko equations to treat each of the application examples.

\section{Formulation and Main Results}\label{sec:res}
We first provide a glossary of
 symbols and notation to be used
in this paper to help the reading.

\begin{tabbing}
\hspace*{0.7in}\mbox{ }\=  \kill
$r$\> a fixed positive number\\
$\abs{\cdot}$\>  Euclidean norm\\
$\C([a; b];\R^n)$\> set of $\R^n$-valued continuous
functions defined on $[a; b]$\\
$\C$\> $:= \C([-r; 0];\R^n)$\\
$\Vphi$\> $=(\varphi_1,\dots,\varphi_n)\in \C$\\
$\vec x$\>$=(x_1,\dots,x_n):=\Vphi(0)\in\R^n$\\
$\norm{\Vphi}$\> $:=
\sup\{\abs{\Vphi(t)}: t\in [-r,0]\}$\\
$\vec X_t$\> $:=\vec X_t(s):= \{\vec X(t + s) : -r\leq s\leq 0\}$ (segment function)\\
$X_{i,t}$\> $:=X_{i,t}(s):= \{X_i(t + s) : -r\leq s\leq 0\}$\\
$\C_+$\> $:=\left\{\Vphi=(\varphi_1,\dots,\varphi_n)\in\C: \varphi_i(s)\geq 0\;\forall s\in[-r,0],i=1,\dots,n\right\}$\\
$\partial\C_+$\> $:=\left\{\Vphi=(\varphi_1,\dots,\varphi_n)\in\C: \norm{\varphi_i}=0\text{ for some }i=1,\dots,n\right\}$\\
$\C^{\circ}_+$\> $:=\left\{\Vphi\in\C_+: \varphi_i(s)> 0,\forall s\in[-r,0], i=1,\dots,n\right\}\neq \C_+\setminus\partial\C_+$\\
$\norm{\Vphi}_\alpha$\> $:=\norm{\Vphi}+\sup_{-r\leq s< t\leq 0}\frac{\abs{\Vphi(t)-\Vphi(s)}}{(t-s)^\alpha},$ for some $0<\alpha<1$\\
$\C^\alpha$\>  space of  H\"older continuous functions
endowed with the norm $\norm{\cdot}_\alpha$\\
$\Gamma$\> $n\times n$ matrix\\
$\Gamma^\top$\> transpose of $\Gamma$\\
$\vec B(t)$\> $=(B_1(t),\dots, B_n(t))^\top$, a $n$-dimensional standard Brownian motion \\
$\vec E(t)$\> $=(E_1(t),\dots, E_n(t))^\top:=\Gamma^\top\vec B(t)$\\
$\Sigma$\> $=(\sigma_{ij})_{n\times n}:= \Gamma^\top \Gamma$\\
$\M$\>set of ergodic invariant probability measures of $\vec X_t$ supported on $\partial \C_+$\\
$\conv(\M)$\>convex hull of $\M$\\
$\vec 0$\> the zero constant function in $\C$\\
$\bdelta^*$\>the Dirac measure concentrated at $\vec 0$\\
$\1_A$\> the indicator function of set $A$\\
$D_{\eps,R}$\>$:=\left\{\Vphi\in\C_+:\norm{\Vphi}\leq R, x_i\geq \eps\;\forall i;\vec x:=\Vphi(0)\right\}, \eps, R>0$\\
$\DD$\> space of Cadlag functions mapping $[-r,0]$ to $\R^n$\\
$A_0,A_1,A_2$\>constants satisfying Assumption \ref{asp1}\\
$\gamma_0,\gamma_b,M$\>constants satisfying Assumption \ref{asp1}\\
$\vec c,h(\cdot),\mu$\>vector, function and probability measure satisfying Assumption \ref{asp1}\\
$\wdt K,b_1,b_2$\> constants satisfying Assumption \ref{asp-2}\\
$h_1(\cdot),\mu_1$\>function and probability measure satisfying Assumption \ref{asp-2}\\
$B_0,B_1,B_2$\> constants satisfying Assumption \ref{a.extn2}\\
$B_3,p_2$\> constants satisfying Assumption \ref{a.extn2}\\
$I$\> a subset of $\{1,\dots,n\}$\\
$I^c$\>:=$\{1,\dots,n\}\setminus I$\\
$
\C_+^{I}$\>$:=\left\{\Vphi\in\C_+: \norm{\varphi_i}=0\text{ if }i\in I^c \right\}$\\
$
\C_+^{I,\circ}$\>$:=\left\{\Vphi\in\C_+: \norm{\varphi_i}=0\text{ if }i\in I^c \text{ and } \varphi_i(s)>0\;\forall s\in [-r,0]\text{ if }i\in I\right\}
$\\
$\partial \C_+^I$\>$:=\left\{\Vphi\in\C_+: \norm{\varphi_i}=0\text{ if }i\in I^c\text{ and }\norm{\varphi_i}=0\text{ for some }i\in I\right\}$\\
$\M^I$\> sets of ergodic invariant probability measure on $\C^I_+$\\
$\M^{I,\circ}$\> sets of ergodic invariant probability measure on $\C^{I,\circ}_+$\\
$\partial\M^{I}$\> sets of ergodic invariant probability measure on $\partial\C^I_+$\\
$I_\pi$\> the subset of $\{1,\dots,n\}$ such that $\pi(\C_+^{I_\pi,\circ})=1, \pi\in\M$\\
$\gamma,p_0,A$\>constants satisfying the condition in Lemma \ref{LV}\\
$\Vrho$\> $=(\rho_1,\dots,\rho_n)$ vector satisfying the condition in Lemma \ref{LV}\\
$ V_{\Vrho}(\Vphi)$\>$:=\Big(1+\vec c^\top\vec x\Big)\prod_{i=1}^nx_i^{\rho_i}\exp\left\{A_2\int_{-r}^0\mu(ds)\int_s^0 e^{\gamma(u-s)}h\big(\Vphi(u)\big)du\right\}$\\[0.5ex]
$ V_{\vec 0}(\Vphi)$\>$:=\Big(1+\vec c^\top\vec x\Big)\exp\left\{A_2\int_{-r}^0\mu(ds)\int_s^0 e^{\gamma(u-s)}h\big(\Vphi(u)\big)du\right\}$\\[0.5ex]
$\C_{V, M}$\>$:=\{\Vphi\in\C_+: A_2\gamma \int_{-r}^0\mu(ds)\int_s^0e^{\gamma(u-s)}h\big(\Vphi(u)\big)du\leq A_0 \text{ and } |\vec x|\leq M\}$\\
$n^*$\> constant satisfying $\gamma_0 (n^*-1)- A_0>0$\\
$p_1$\> constant satisfying condition \eqref{cd-p0} and $p_1>p_0$\\
$\C_V(\hat\delta)$\>$:=\{\Vphi\in\C^{\circ}_+\cap\C_{V,M} \text{ and }|\varphi_i(0)|\leq\hat\delta \text{ for some }i\}$\\
$\hat \alpha_i,\alpha_*,\kappa_e$\>constants satisfying \eqref{e3.3}\\
$T_e,\delta_e$\>constants determined in Lemma \ref{lm4.2}
\end{tabbing}

In this paper, we
treat a stochastic delay Kolmogorov system of equations
\begin{equation}\label{Kol-Eq}
\begin{cases}
dX_i(t)=X_i(t)f_i(\vec X_t)dt+X_i(t)g_i(\vec X_t)dE_i(t), \quad i=1,\dots,n,\\
\vec X_0=\Bphi\in \C_+.\end{cases}
\end{equation}
Denote the solution of \eqref{Kol-Eq} by $\vec X^{\Bphi}(t)$.
 For convenience, we  usually suppress the superscript ``$\Bphi$'' and
use $\PPphi$ and $\Ephi$ to denote the probability and expectation given the initial value $\Bphi$, respectively. We also assume that the initial value is non-random. Denote by $\{\F_t\}_{t\geq 0}$
the filtration satisfying the usual conditions and assume that the $n$-dimensional Brownian motion $\vec B(t)$ is adapted to $\{\F_t\}_{t\geq 0}$.
Note that a segment process is also referred to as a memory
segment function.
We assume the following assumptions hold in the rest of the paper.

\begin{asp}\label{asp1}
{\rm
The coefficients of \eqref{Kol-Eq} satisfy
\begin{itemize}
\item [\rm(1)] $\diag(g_1(\Vphi),\dots,g_n(\Vphi))\Gamma^\top\Gamma\diag(g_1(\Vphi),\dots,g_n(\Vphi))=(g_i(\Vphi)g_j(\Vphi)\sigma_{ij})_{n\times n}$ is a positive definite matrix for any $\Vphi\in\C_+$.
\item [\rm(2)] $f_i(\cdot),g_i(\cdot): \C_+\to\R$ are Lipschitz continuous in each bounded set of $\C_+$ for any $i=1,\dots,n.$
\item [\rm(3)] There exist $\vec c=(c_1,\dots,c_n)\in \R^{n}$, $c_i>0,\forall i$ and $\gamma_b,\gamma_0>0$, $A_0>0$, $A_1>A_2>0$, $M>0$, a continuous function $h:\R^n\to\R_+$ and a probability measure $\mu$ concentrated on $[-r,0]$ such that for any $\Vphi\in\C_+$
\begin{equation}\label{asp1-3}
\begin{aligned}
\dfrac{\sum_{i=1}^n c_ix_if_i(\Vphi)}{1+\vec c^\top \vec x}&-\dfrac12\dfrac{\sum_{i,j=1}^n\sigma_{ij}c_ic_jx_ix_jg_i(\Vphi)g_j(\Vphi)}{(1+\vec c^\top \vec x)^2}+\gamma_b\sum_{i=1}^n\Big(\abs{f_i(\Vphi)}+g_i^2(\Vphi)\Big)
\\&\leq A_0 \1_{\left\{\abs{\vec x}< M\right\}}-\gamma_0-A_1h(\vec x)+A_2\int_{-r}^0h\big(\Vphi(s)\big)\mu(ds),
\end{aligned}
\end{equation}
where $\vec x:=\Vphi(0).$ We assume without loss of generality that $h:\R^n\to[1,\infty)$,
otherwise, we can always change $\gamma_0$ and $A_1, A_2$ to fulfill this requirement.
\end{itemize}
}
\end{asp}

\begin{asp}\label{asp-2}
{\rm
One of following conditions holds:
\begin{itemize}
\item [\rm(a)] There is a constant $\wdt K$ such that for any $\Vphi\in \C_+$, $\vec x=\Vphi(0)$
\begin{equation}\label{A1}
\sum_{i=1}^n\abs{f_i(\Vphi)}+\sum_{i=1}^n g_i^2(\Vphi)\leq \wdt K \Big[h(\vec x)+\int_{-r}^0 h(\Vphi(s))\mu(ds)\Big].
\end{equation}
\item [\rm(b)] There exist $b_1,b_2>0$,
a function $h_1:\R^n\to[1,\infty]$, and
a probability measure $\mu_1$ on $[-r,0]$ such that for any $\Vphi\in \C_+$, $\vec x=\Vphi(0)$
\begin{equation}\label{A2}
b_1h_1(\vec x)\leq \sum_{i=1}^n\abs{f_i(\Vphi)}+\sum_{i=1}^n g_i^2(\Vphi)\leq b_2\Big[h_1(\vec x)+\int_{-r}^0h_1(\Vphi(s))\mu_1(ds)\Big].
\end{equation}
\end{itemize}
}
\end{asp}

\begin{rem}

\begin{itemize}
\item
The assumptions above
and additional assumptions to follow
are not restrictive
and are easily verifiable.
Such conditions are common in
population dynamics
 in the literature; see Section \ref{sec:app}.

\item
Parts (2) and (3) of Assumption \ref{asp1} guarantee the existence and uniqueness of a strong solution to \eqref{Kol-Eq}.
We need part (1) of Assumption \ref{asp1} to ensure that the solution to \eqref{Kol-Eq} is a non-degenerate diffusion.
Moreover,  as will be shown later
that (3) implies the tightness of the family of transition probabilities associated with
 the solution to \eqref{Kol-Eq}.
 One difficulty stems from the positive term $A_2\int_{-r}^0h\big(\Vphi(s)\big)\mu(ds)$ on the right-hand side of \eqref{asp1-3}, which cannot be relaxed in practice.
\item
Assumption \ref{asp-2} plays an important role in ensuring the $\pi$-uniform integrability of $\sum_i \big(\abs{f_i(\cdot)}+g_i^2(\cdot)\big)$, for any invariant measure $\pi$.
\end{itemize}
\end{rem}

As was mentioned, persistence and extinction are concepts of vital importance in biology and ecology. It turns out that such concepts are features in all stochastic functional \ko systems.
The termination of a species in biology is referred to as extinction, the moment of extinction is generally considered to be the death of the last individual of the species.
In contrast
 to extinction, we have the persistence of a species.
We first define persistence and extinction
similar to \cite{HN18, SS12, SBA11}.

\begin{deff} \label{def-persist} {\rm
Let $\vec X(t)= (X_1(t),\dots,X_n(t))^\top$ be the solution of \eqref{Kol-Eq}.
	The process $\vec X$ is strongly stochastically persistent if for any $\eps>0$, there exists an $R>0$ such that for any $\Bphi\in \C_+^\circ$
	\begin{equation}
	\liminf_{t\to\infty}\PPphi\left\{R^{-1}\leq \abs{X_i(t)}\leq R\right\}\geq 1-\eps\text{ for all }i=1,\dots,n.
	\end{equation}
}\end{deff}

\begin{deff} {\rm With $\vec X(t)$ given in Definition \ref{def-persist}, for  $\Bphi\in\C_+^\circ $
and some  $i \in \{1,\dots,n\}$, we say
$X_i$ goes extinct with probability $p_{\Bphi}>0$ if
	\[
	\PPphi\left\{\lim_{t\to\infty}X_i(t)=0\right\}=p_{\Bphi}.
	\] }
\end{deff}

Let $\M$ be the set of ergodic invariant probability measures of $\vec X_t$ supported on the boundary $\partial \C_+$.  Letting $\bdelta^*$ be the Dirac measure concentrated at $\vec 0$, then $\bdelta^*\in\M$ so that $\M\neq\emptyset$. For a subset $\wdt\M\subset \M$, denote by $\conv(\wdt\M)$ the convex hull of $\wdt\M$ (the set of probability measures $\pi$ of the form
$\pi(\cdot)=\sum_{\nu\in\wdt\M}p_\nu\nu(\cdot)$
with $p_\nu\geq 0$  and $\sum_{\nu\in\wdt\M}p_\nu=1$).
{\color{blue} 
	For any $\pi\in\conv(\M)$, we define
\begin{equation*}
\lambda_i(\pi):=\int_{\partial \C_+}\left(f_i(\Vphi)-
\frac{\sigma_{ii}g_i^2(\Vphi)}{2}\right)\pi(d\Vphi).
\end{equation*}
}

For a subset $I$ of $\{1,\dots,n\}$, denote
$I^c:=\{1,\dots,n\}\setminus I$,
$$
\C_+^{I}:=\left\{\Vphi\in\C_+: \norm{\varphi_i}=0\text{ if }i\in I^c \right\},
$$
$$
\C_+^{I,\circ}:=\left\{\Vphi\in\C_+: \norm{\varphi_i}=0\text{ if }i\in I^c \text{ and } \varphi_i(s)>0\text{ for all }s\in [-r,0]\text{ if }i\in I\right\},
$$
and
$$\partial \C_+^I:=\left\{\Vphi=(\varphi_1,\dots,\varphi_n)\in\C: \norm{\varphi_i}=0\text{ if }i\in I^c\text{ and }\norm{\varphi_i}=0\text{ for some }i\in I\right\}.$$
In case $I=\emptyset$,
$\C_+^I=\C_+^{I,\circ}=\{\vec 0\}.$
Denote by
$\M^I, \M^{I,\circ}, \partial M^I$  the sets of ergodic measures on $\C_+^I,\C_+^{I,\circ}$
and $\partial C_+^I$, respectively.

Consider $\pi\in\M\setminus\{\bdelta^*\}$.
Since the diffusion $\vec X_t$ is non-degenerate in each subspace,
there exists a subset of $\{1,\dots,n\}$, denoted by $I_\pi$ such that
$\pi(\C_+^{I_\pi,\circ})=1$.
The following conditions will imply persistence cannot happen.

\begin{asp}\label{a.extn}{\rm
There exists a subset $I\subset\{1,\dots,n\}$ such that
\begin{equation}\label{ae3.1}
\max_{i\in I_\pi^c,\; \pi\in\M^{I,\circ}}\{\lambda_i(\pi)\}<0.
\end{equation}
If $I\neq \emptyset$, we assume further that
\begin{equation}\label{ae3.2}
\max_{i\in I}\{\lambda_i(\nu)\}>0,
\end{equation}
for any $\nu\in \conv(\partial \M^I)$.
}\end{asp}

\begin{asp}\label{asp-ginverse}{\rm
The inverse of the matrix $(x_ix_j\sigma_{ij}g_i(\Vphi)g_j(\Vphi))_{n\times n}$ is uniformly bounded in $D_{\eps,R}$ for each $\eps, R>0$,
where
$$D_{\eps,R}:=\left\{\Vphi\in\C_+:\norm{\Vphi}\leq R, x_i\geq \eps\;\forall i;\vec x:=\Vphi(0)\right\}.$$
}\end{asp}

\begin{thm}\label{thm4.1}
Assume  Assumptions  {\rm\ref{asp1}}, {\rm\ref{asp-2}},
{\rm\ref{a.extn}}, and {\rm\ref{asp-ginverse}} hold.
For any $p<p_0$ with $p_0$ being a
sufficiently small constant   $($as given in Lemma {\rm\ref{LV}}$)$,
and any initial value $\Bphi\in\C^{\circ}_+$, we have
\begin{equation}
\lim_{T\to\infty}\frac 1T\int_0^T\Ephi \bigwedge_{i=1}^n \norm{X_{i,t}}^{p}dt=0,
\end{equation}
where $\bigwedge_{i=1}^n x_i:=\min_{i=1,\dots,n}\{x_i\}$ and $\vec X_t=:(X_{1,t},\dots,X_{n,t})$.
\end{thm}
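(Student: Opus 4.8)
The plan is to read the target as a statement about the long-run occupation of the segment process and to prove it by showing that every limit of the time-averaged laws is carried by the boundary $\partial\C_+$, where the functional $\Vphi\mapsto\bigwedge_{i=1}^n\norm{\varphi_i}^p$ vanishes identically. For $T>0$ and $\Bphi\in\C^{\circ}_+$ set $\Pi_T^{\Bphi}(\cdot)=\frac1T\int_0^T\PPphi\{\vec X_t\in\cdot\}\,dt$, a probability measure on $\C$. First I would record the consequence of the functional It\^o formula that for each $i$,
\[
\log X_i(t)=\log\varphi_i(0)+\int_0^t\Big(f_i(\vec X_s)-\tfrac12\sigma_{ii}g_i^2(\vec X_s)\Big)\,ds+\int_0^t g_i(\vec X_s)\,dE_i(s),
\]
so that the per-capita growth rates are exactly the integrands whose averages against boundary measures define $\lambda_i$; this is the bridge between pathwise averages and the invasion-rate hypotheses in Assumption \ref{a.extn}.

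Next I would assemble the two soft ingredients. Using the Lyapunov bound of Assumption \ref{asp1}(3) together with the regularity and tightness estimates of Section \ref{sec:key}, the family $\{\Pi_T^{\Bphi}\}_{T\ge1}$ is tight in $\C$ (compactness entering through the H\"older space $\C^\alpha$), and every weak limit $\pi^\ast$ is an invariant probability measure of the Markov segment process $\vec X_t$. The crucial observation is that, because we work on path space, $\int_{\C}\bigwedge_i\norm{\varphi_i}^p\,d\Pi_T^{\Bphi}=\frac1T\int_0^T\Ephi\bigwedge_i\norm{X_{i,t}}^p\,dt$ is exactly the quantity to be controlled, and $\Vphi\mapsto\bigwedge_i\norm{\varphi_i}^p$ is continuous on $\C$. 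To exchange limit and integral I would invoke Assumption \ref{asp-2} (the $\pi$-uniform integrability of $\sum_i(\abs{f_i}+g_i^2)$) together with the finite $V_{\Vrho}^{p_0}$-moment furnished by Lemma \ref{LV}; since $p<p_0$, this yields a uniform bound on a $(1+\delta)$-moment of $\bigwedge_i\norm{\varphi_i}^p$ against $\{\Pi_T^{\Bphi}\}$, hence uniform integrability. Consequently, along any $T_k\to\infty$ with $\Pi_{T_k}^{\Bphi}\Rightarrow\pi^\ast$ one has $\frac1{T_k}\int_0^{T_k}\Ephi\bigwedge_i\norm{X_{i,t}}^p\,dt\to\int_{\C}\bigwedge_i\norm{\varphi_i}^p\,d\pi^\ast$.

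The heart of the argument is to prove that every such $\pi^\ast$ satisfies $\pi^\ast(\C^{\circ}_+)=0$, for then $\bigwedge_i\norm{\varphi_i}=0$ holds $\pi^\ast$-a.s.\ and the above limit is $0$; as every subsequence then has a sub-subsequence with limit $0$, the full Ces\`aro average vanishes and the theorem follows. I would decompose $\pi^\ast$ into ergodic components, using Assumption \ref{asp-ginverse} (non-degeneracy on each $D_{\eps,R}$) to ensure each component is carried by a well-defined face $\C_+^{I,\circ}$, and rule out any component charging the full interior. This is exactly where Assumption \ref{a.extn} and the function $V_{\Vrho}$ of Lemma \ref{LV} enter: the exponent vector $\Vrho$ is selected, via the minimax/linear-programming structure of the invasion rates on $\conv(\M)$ and $\conv(\partial\M^I)$, so that the expected value $\sum_i\rho_i\lambda_i(\pi)$ is strictly negative on every admissible boundary measure while, by stationarity, it would be forced to vanish on an interior ergodic measure. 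I expect Lemma \ref{LV} to package this into a generator bound of the form $\Lom V_{\Vrho}^{p}\le-\kappa\,V_{\Vrho}^{p}$ off a fixed neighborhood of the boundary for all small $p$; integrating it against an interior ergodic component and combining with the stationarity identity $\int\Lom V_{\Vrho}^{p}\,d\pi=0$ gives the desired contradiction.

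I expect this last step to be the main obstacle in the infinite-dimensional setting, for three compounding reasons. First, one must verify that the limiting occupation measures are genuinely invariant for the segment flow and that the identity $\int\Lom V_{\Vrho}^{p}\,d\pi=0$ is licit, which requires controlling the unbounded functionals $f_i,g_i$ through Assumption \ref{asp-2}. Second, the quantity to be made small is the \emph{segment} supremum $\norm{X_{i,t}}=\sup_{-r\le s\le0}\abs{X_i(t+s)}$ rather than the instantaneous value $\abs{X_i(t)}$; smallness of the current state does not suffice, which is precisely why the argument must live on path space $\C$ and rely on the H\"older tightness, reflecting the remark that the history need not be near the boundary even when the state is. Third, the selection of $\Vrho$ must be compatible with the delay correction built into $V_{\Vrho}$, namely the factor $\exp\{A_2\int_{-r}^0\mu(ds)\int_s^0 e^{\gamma(u-s)}h\,du\}$, whose functional derivative has to be shown to offset the positive history term $A_2\int_{-r}^0 h(\Vphi(s))\mu(ds)$ of Assumption \ref{asp1}(3). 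Managing these three points simultaneously is the crux; the occupation-measure bookkeeping and the moment estimates are then routine.
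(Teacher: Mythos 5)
Your overall skeleton --- pass to the time-averaged laws $\Pi_T^{\Bphi}$, establish tightness and that every weak$^*$ limit $\pi^*$ is invariant, use Assumption \ref{asp-2} and the moment bounds from Lemma \ref{LV} to get uniform integrability so that the Ces\`aro average converges to $\int\bigwedge_i\norm{\varphi_i}^p\,d\pi^*$, and reduce the theorem to showing $\pi^*(\C_+^\circ)=0$ --- is exactly the paper's reduction (which delegates this bookkeeping to \cite[Lemma 3.5]{NNY21}). The gap is in the one step that carries all the weight: your mechanism for ruling out an interior invariant measure does not work. You propose to choose $\Vrho$ so that $\sum_i\rho_i\lambda_i(\cdot)$ is strictly negative on the relevant boundary measures and then contradict the stationarity identity $\int\Lom V_{\Vrho}^{p}\,d\pi=0$ for an interior ergodic $\pi$, via a generator bound $\Lom V_{\Vrho}^{p}\le-\kappa V_{\Vrho}^{p}$ ``off a fixed neighborhood of the boundary.'' No such bound is available: Lemma \ref{LV} yields negativity of $\Lom V_{\Vrho}^{p_0}$ only outside the compact region $\C_{V,M}$ (it is a dissipativity estimate at infinity), and the genuine extinction Lyapunov function $\hat U_\theta$ of Lemma \ref{lm4.3} has guaranteed contraction only for $\Bphi\in\C_+^\circ\cap\C_{V,M}$ with $\phi_i(0)<\delta_e$ for $i\in I^c$, i.e.\ only near the face $\C_+^{I}$. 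Assumption \ref{a.extn} constrains nothing about the drift in the interior away from the boundary, and for an interior ergodic $\pi$ one has $\lambda_i(\pi)=0$ for every $i$ (the analogue of Lemma \ref{lm4.1}), which is perfectly consistent with $\int\Lom V_{\Vrho}^{p}\,d\pi=0$. So no contradiction materializes, and a priori an interior invariant measure supported away from the boundary could coexist with your hypotheses on boundary measures.

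What actually closes the argument, and what your proposal omits, is accessibility. The paper first proves (Lemmas \ref{lm4.2}--\ref{lm4.3} and Proposition \ref{prop4.1}) that $\Ephi\hat U_\theta(\vec X_T)$ contracts geometrically when the state starts near the face $\C_+^{I}$, whence $\PPphi\{\lim_{t\to\infty}\hat U_\theta(\vec X_t)=0\}\ge1-\eps$ whenever $\hat U_\theta(\Bphi)$ is small; this is where the minimax exponents $\hat\alpha_i,\alpha_*$ extracted from Assumption \ref{a.extn} enter, as a \emph{local} attraction statement rather than a global stationarity identity. It then uses the nondegeneracy Assumption \ref{asp-ginverse}, through Lemma \ref{e-l4}, to show that from every $\Bphi\in\C_+^\circ$ the segment process reaches a set $B_{\vec y^*,\delta_1}$ on which $\hat U_\theta$ is small, with positive probability. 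Combining the two gives a positive probability of convergence to $\partial\C_+$ from every interior initial condition, which is what forbids an invariant probability measure on $\C_+^\circ$ and forces every limit of the occupation measures onto the boundary, where $\bigwedge_i\norm{\varphi_i}^p$ vanishes. In your write-up Assumption \ref{asp-ginverse} is invoked only to label ergodic components by faces; its real role --- making the attracting neighborhood of the boundary reachable from the whole interior --- is the missing idea, and without it the proof cannot be completed along the lines you describe.
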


With additional technical conditions, we can determine which species goes extinct, and which persists.
First,
we define
  random normalized occupation measures
\begin{equation}\label{rcm}
\wdt\Pi_t(\cdot):=\dfrac1t\int_0^t\1_{\{\vec X_s\in\cdot\}}ds,\,t>0.
\end{equation}
For any initial condition $\Bphi\in\C_+$,
denote the weak$^*$-limit set of the family $\left\{\wdt \Pi_t(\cdot), t\geq 1\right\}$ by $\U=\U(\omega)$.

\begin{asp}\label{a.extn2}
{\rm
	Assume one
of the following conditions hold.
	\begin{itemize}
		\item Assumption \ref{asp-2}(a) holds and	
there exist  constants $p_2>0$ and $B_1>B_2>0, B_0>0$, $B_3>0$ such that for any $\Vphi\in \C_+,\vec x:=\Vphi(0)$
\begin{equation}\label{asp4-3a}
\begin{aligned}
(1+\vec c^\top\vec x)^{p_2}&\left(\dfrac{\sum_{i=1}^n c_ix_if_i(\Vphi)}{1+\vec c^\top \vec x}-\dfrac12\dfrac{\sum_{i,j=1}^n\sigma_{ij}c_ic_jx_ix_jg_i(\Vphi)g_j(\Vphi)}{(1+\vec c^\top \vec x)^2}
\right)
\\&\leq B_0-B_1(1+\vec c^\top\vec x)^{p_2}h(\vec x)+B_2\int_{-r}^0(1+\vec c^\top\Vphi(s))^{p_2}h\big(\Vphi(s)\big)\mu(ds),
\end{aligned}
\end{equation}
and
\begin{equation}\label{asp4-3b}
(1+\vec c^\top\vec x)^{2p_2}\sum_{i=1}^n g_i^2(\Vphi)\leq B_3(1+\vec c^\top\vec x)^{p_2}h(\vec x)+B_3\int_{-r}^0(1+\vec c^\top\Vphi(s))^{p_2}h\big(\Vphi(s)\big)\mu(ds).
\end{equation}
\item Assumption {\rm\ref{asp-2}(b)} is satisfied, and \eqref{asp4-3a} and \eqref{asp4-3b} hold with $h,\mu$ replaced by $h_1,\mu_1$.
\end{itemize}
}
\end{asp}

\begin{asp}\label{a.extn3}
{\rm
Let $S$ be a family of subsets $I$ satisfying Assumption \ref{a.extn}. We assume either that $S^c:=2^{\{1,\dots,n\}}\setminus S$ is empty, where $2^{\{1,\dots,n\}}$ denotes the family of all subsets of $\{1,\dots,n\}$,
or
$$\max_{i=1,\dots,n}\left\{\lambda_i(\nu)\right\}>0\text{ for any }\nu\in\conv(\displaystyle\cup_{J\notin S}\M^{J,\circ}).$$
}
\end{asp}

\begin{thm}\label{thm4.2}
Suppose that Assumptions {\rm\ref{asp1}}, {\rm\ref{a.extn}}, {\rm\ref{asp-ginverse}}, {\rm\ref{a.extn2}}, and {\rm\ref{a.extn3}} are satisfied.
Then for any $\Bphi\in\C^{\circ}_+$,
\begin{equation}\label{e0-thm4.2}
\sum_{I\in S} P_{\Bphi}^I=1,\quad P_{\Bphi}^I>0,
\end{equation}
where for $\Bphi\in\C^{\circ}_+$,
$$
\begin{aligned} 
P_{\Bphi}^I:=\PPphi\bigg\{\U(\omega)&\subset\conv(\M^{I,\circ})\,\text{ and }
\\&\lim_{t\to\infty}\dfrac{\ln X_i(t)}t\subset\left\{\lambda_i(\pi):\pi\in\conv(\M^{I,\circ})\right\}, i\in I^c\bigg\}.
\end{aligned}
$$
In the above, $\lim_{t\to\infty} x(t)$ can be understood as the set of limit points of $x(\cdot)$ as $t\to\infty$.
\end{thm}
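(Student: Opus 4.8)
The plan is to combine the weak convergence of the random occupation measures with the extinction result of Theorem~\ref{thm4.1} and the persistence machinery inherited from the companion Part~I \cite{NNY21}, organizing the argument around the classification of which coordinates survive. The first step is to show that, almost surely, every element $\pi$ of the limit set $\U(\omega)$ is an invariant probability measure for the segment process $\vec X_t$; this follows from the tightness of $\{\wdt\Pi_t\}$ established in Section~\ref{sec:key} together with a Krylov--Bogolyubov-type argument applied pathwise. Once the limit points are known to be invariant, I would argue that Theorem~\ref{thm4.1}, which forces $\frac1T\int_0^T\Ephi\bigwedge_i\norm{X_{i,t}}^p\,dt\to 0$, prevents any mass of $\U(\omega)$ from sitting on the interior: any invariant measure in $\U$ must charge one of the faces $\C_+^{I,\circ}$, so $\U(\omega)\subset\bigcup_I\conv(\M^{I,\circ})$.

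\emph{The second step} is to pin down a single index set $I=I(\omega)$. The key is an exclusion principle: Assumption~\ref{a.extn} (specifically \eqref{ae3.1}) provides that for the relevant $I$ the external Lyapunov exponents $\lambda_i(\pi)$, $i\in I_\pi^c$, are strictly negative, while \eqref{ae3.2} ensures the boundary faces of $\C_+^I$ are repelling; Assumption~\ref{a.extn3} handles the sets $J\notin S$ by guaranteeing $\max_i\lambda_i(\nu)>0$, so that no limiting measure can concentrate on such a $J$. I would show that if $\U(\omega)$ intersected $\conv(\M^{J,\circ})$ for some $J\notin S$, the positivity of some $\lambda_i(\nu)$ would push the corresponding coordinate away from zero along a subsequence, contradicting the structure of the occupation measures; hence $\U(\omega)\subset\conv(\M^{I,\circ})$ for some $I\in S$. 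The strong Feller and irreducibility properties in $D_{\eps,R}$ (from Assumption~\ref{asp-ginverse}) give that $\U(\omega)$ cannot straddle two different faces, so $I(\omega)$ is well defined and measurable, yielding the partition $\sum_{I\in S}P_{\Bphi}^I=1$.

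\emph{The third step} is to identify the exponents. On the event $\{\U(\omega)\subset\conv(\M^{I,\circ})\}$, for each $i\in I^c$ I would write $\frac{\ln X_i(t)}{t}=\frac1t\int_0^t(f_i-\tfrac{\sigma_{ii}}{2}g_i^2)(\vec X_s)\,ds + \frac1t M_i(t)$, where $M_i$ is the martingale part of $\ln X_i$. Assumption~\ref{a.extn2} (with \eqref{asp4-3a}--\eqref{asp4-3b}) supplies the moment bound needed for $\pi$-uniform integrability of $f_i-\tfrac{\sigma_{ii}}{2}g_i^2$, so the time average of the drift converges to $\int(f_i-\tfrac{\sigma_{ii}}{2}g_i^2)\,d\pi=\lambda_i(\pi)$ for the limiting $\pi$, while the strong law for local martingales kills $\frac1t M_i(t)$. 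This identifies the set of limit points of $\frac{\ln X_i(t)}{t}$ with $\{\lambda_i(\pi):\pi\in\conv(\M^{I,\circ})\}$.

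\emph{The main obstacle} I anticipate is the strict positivity $P_{\Bphi}^I>0$ for every $I\in S$, rather than merely $\sum_I P_{\Bphi}^I=1$. Establishing that each admissible face is reached with positive probability requires a lower bound that the solution, started in the interior, spends enough time near $\C_+^{I,\circ}$ and then gets trapped there; this is where the delay genuinely complicates matters, since closeness of the state $\vec X(t)$ to a face does not imply closeness of the whole segment $\vec X_t$, so the usual one-point nondegeneracy arguments must be upgraded to control the history. I expect to handle this via the repelling estimates of Assumption~\ref{a.extn} combined with a stopping-time and small-ball argument using the uniform nondegeneracy of Assumption~\ref{asp-ginverse}, showing that once the occupation measure is close to $\conv(\M^{I,\circ})$ the external coordinates decay geometrically with positive probability and the segment cannot escape. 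Carefully quantifying this trapping in the infinite-dimensional segment space, uniformly over initial conditions, is the delicate core of the proof.
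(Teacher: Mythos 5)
Your outline follows the same overall architecture as the paper, which obtains the theorem by ``slightly modifying'' the proof of \cite[Theorem 5.2]{HN18} using the lemmas of Sections \ref{sec:key}--\ref{sec:ext}: Lemma \ref{lm4.6} for the invariance of weak$^*$ limits of $\wdt\Pi_t$, Lemma \ref{lm4.9} for trapping near a face and identifying the exponents, and Lemma \ref{e-l4} for accessibility. Your steps 1 and 3, and your account of why $P^I_{\Bphi}>0$ is the delicate part, match this. However, one step is misattributed and would fail as written: the claim that ``strong Feller and irreducibility properties in $D_{\eps,R}$ give that $\U(\omega)$ cannot straddle two different faces.'' Nondegeneracy enters the paper only through Lemma \ref{e-l4}, to show the process reaches the basin $\Delta^{m,\delta,R}_I$ with positive probability (hence $P^I_{\Bphi}>0$); if anything, irreducibility works \emph{against} confinement, since it allows transitions between neighborhoods of different faces. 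The mechanism that actually selects a single $I$ is the Lyapunov trapping of Lemma \ref{lm4.9}: for $\Bphi\in\Delta^{m,\delta,R}_I$ the quantity $U_e=d(\delta_e)\wedge\hat U_\theta(\vec X_{kn^*T_e})$ stays below the threshold in \eqref{e3-lm4.8} for all $k$ with probability at least $1-\eps/2$, which forces $\max_{i\in I^c}X_i(kn^*T_e)<k_0$ forever and, through the contradiction argument with the set $\A_I$ and \eqref{e:nu_supp}, rules out any limit measure charging $\M\setminus\M^{I}$; the dichotomy over $I\in S$ then follows by combining this with Assumption \ref{a.extn3} and upgrading $1-\eps$ to the exact partition as in \cite[Theorem 5.2]{HN18}. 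You describe precisely this trapping in your last paragraph for the positivity claim, so the repair is to invoke that same argument, not Feller/irreducibility, for the non-straddling claim.

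A secondary point: Theorem \ref{thm4.1} controls time averages of \emph{expectations}, so it does not by itself prevent $\U(\omega)$ from charging $\C^{\circ}_+$ pathwise. The paper instead concludes that no invariant probability measure exists on $\C^{\circ}_+$ (via Proposition \ref{prop4.1} and Lemma \ref{e-l4}) and then uses the almost sure invariance of the weak$^*$ limits from Lemma \ref{lm4.6} to place $\U(\omega)$ on the boundary. Your Krylov--Bogolyubov step is the right idea, but note that the genuinely hard part there is the tightness of $\{\wdt\Pi_t\}$ in the infinite-dimensional space $\C_+$, which the paper obtains from the H\"older estimates of Lemma \ref{Holder} rather than from a routine compactness argument.
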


\begin{rem}
From a dynamic system point of view, we have the following observations; see also \cite{HN18}.
\begin{itemize}
\item Assumption \ref{a.extn} states the existence of an attracting subspace on the boundary which normally results in extinction.
\item Assumption \ref{a.extn3} is a technical condition ensuring that the interior of the attracting subspace in Assumption \ref{a.extn} is an attractor in that subspace.
\item Assumption \ref{a.extn2} is a condition to control the volatility of the diffusion part while the nondegenaracy of the diffusion part due to Assumption \ref{asp-ginverse} leads to the accessibility to the boundary from any interior point.
\end{itemize}

\end{rem}

\section{Technical Results}\label{sec:key}

\subsection{Well-posedness of the problem}
The well-posedness of the problem \eqref{Kol-Eq} and some basic properties have been studied in the first part \cite{NNY21}.
We restate some results, while the proofs are referred to \cite{NNY21}.
The following series of results provide the estimating of the infinitesimal operator $\Lom V_{\Vrho}$, the well-posedness of the problem, the ``local" compactness of the solution, the regularity of the solution and the continuity on the initial data, respectively.
The formula of the infinitesimal operator and the functional It\^o formula are referred to the first part \cite{NNY21}.

\begin{lm}\label{LV}
	For any $\gamma<\gamma_b$ and $p_0>0$, $\Vrho=(\rho_1,\dots,\rho_n)\in\R^n$ satisfying
	\begin{equation}\label{cd-p0}
	\abs{\Vrho}<\min\left\{\dfrac{\gamma_b}2, \dfrac1n, \frac{\gamma_b}{4\sigma^*}\right\}\;
	\text{and}\;p_0<\min\left\{1,\dfrac{\gamma_b}{8n\sigma^*}\right\},
	\end{equation}
	where $\sigma^*:=\max\{\sigma_{ij}: 1\leq i,j\leq n\}$,
	let
	$$\displaystyle V_{\Vrho}(\Vphi):=\Big(1+\vec c^\top\vec x\Big)\prod_{i=1}^nx_i^{\rho_i}\exp\left\{A_2\int_{-r}^0\mu(ds)\int_s^0 e^{\gamma(u-s)}h\big(\Vphi(u)\big)du\right\}.$$
	Then,
	we have
	\begin{equation}\label{est-LV}
	\begin{aligned}
	\Lom V_{\Vrho}^{p_0}(\Vphi)\leq &p_0V_{\Vrho}^{p_0}(\Vphi)\Bigg[A_0 \1_{\{|\vec x|<M\}}-\gamma_0-Ah(\vec x)
	\\&-A_2\gamma \int_{-r}^0\mu(ds)\int_s^0e^{\gamma(u-s)}h\big(\Vphi(u)\big)du-\dfrac{\gamma_b}2\sum_{i=1}^n\Big(\abs{f_i(\Vphi)}+g_i^2(\Vphi)\Big)
	\Bigg],
	\end{aligned}
	\end{equation}
	where $\vec x:=\Vphi(0)$ and $A$ is a positive number satisfying $A<A_1-A_2\int_{-r}^0 e^{-\gamma s}\mu(ds)$.
	Recall that $\vec c$, $M$, $A_0$, $A_1$, $A_2$, $\gamma_0$, $\gamma_b$, $h(\cdot)$, $\mu(\cdot)$ are defined in Assumption {\rm\ref{asp1}(3)}.
\end{lm}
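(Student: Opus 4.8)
The plan is to pass to the logarithm and exploit the product structure of $V_{\Vrho}$. Writing $V_{\Vrho}=e^{W}$ with
\[
W(\Vphi)=\log\big(1+\vec c^\top\vec x\big)+\sum_{i=1}^n\rho_i\log x_i+G(\Vphi),\qquad G(\Vphi):=A_2\int_{-r}^0\mu(ds)\int_s^0e^{\gamma(u-s)}h\big(\Vphi(u)\big)du,
\]
the functional It\^o calculus of \cite{NNY21} yields the chain rule
\[
\Lom V_{\Vrho}^{p_0}(\Vphi)=p_0V_{\Vrho}^{p_0}(\Vphi)\Big[\Lom W(\Vphi)+\tfrac{p_0}2\sum_{i,j=1}^n\sigma_{ij}\,x_ix_jg_i(\Vphi)g_j(\Vphi)\,\partial_{x_i}W\,\partial_{x_j}W\Big],
\]
so it suffices to dominate the bracket. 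First I would record the vertical derivatives: since the endpoint $\vec x=\Vphi(0)$ enters $G$ only through the single time $u=0$ of an absolutely continuous integral, $G$ has vanishing first and second vertical derivatives in the Dupire sense, whence $\partial_{x_i}W=\frac{c_i}{1+\vec c^\top\vec x}+\frac{\rho_i}{x_i}$ and $\partial^2_{x_ix_j}W=-\frac{c_ic_j}{(1+\vec c^\top\vec x)^2}-\frac{\rho_i}{x_i^2}\delta_{ij}$.

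The crux is the horizontal (time) derivative $\mathcal D G$. The plan is to set $Y(t):=G(\vec X_t)$, substitute $v=t+u$ to obtain $Y(t)=A_2\int_{-r}^0\mu(ds)\int_{t+s}^t e^{\gamma(v-t-s)}h(\vec X(v))\,dv$, and differentiate in $t$ by Leibniz's rule. The endpoint $v=t$ produces $A_2h(\vec x)\int_{-r}^0e^{-\gamma s}\mu(ds)$, the endpoint $v=t+s$ produces $-A_2\int_{-r}^0h(\Vphi(s))\mu(ds)$, and the explicit $t$-dependence of the exponent produces $-\gamma G(\Vphi)$, i.e.
\[
\mathcal D G(\Vphi)=A_2h(\vec x)\int_{-r}^0e^{-\gamma s}\mu(ds)-A_2\int_{-r}^0h\big(\Vphi(s)\big)\mu(ds)-\gamma G(\Vphi).
\]
The $-\gamma G(\Vphi)$ is exactly the memory term $-A_2\gamma\int_{-r}^0\mu(ds)\int_s^0e^{\gamma(u-s)}h(\Vphi(u))du$ in \eqref{est-LV}; the kernel $e^{\gamma(u-s)}$ is engineered precisely so that this term appears with the favorable sign.

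Assembling $\Lom W=\mathcal D G+\sum_i\partial_{x_i}W\,x_if_i+\tfrac12\sum_{ij}\sigma_{ij}x_ix_jg_ig_j\partial^2_{x_ix_j}W$ and inserting the derivatives, the contributions of $\log(1+\vec c^\top\vec x)$ reproduce verbatim the first two terms on the left of \eqref{asp1-3}, namely $\mathcal M(\Vphi):=\frac{\sum_ic_ix_if_i}{1+\vec c^\top\vec x}-\frac12\frac{\sum_{ij}\sigma_{ij}c_ic_jx_ix_jg_ig_j}{(1+\vec c^\top\vec x)^2}$, while the $\rho_i$-terms add $\sum_i\rho_if_i-\tfrac12\sum_i\rho_i\sigma_{ii}g_i^2$. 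The decisive point is a cancellation: the $+A_2\int_{-r}^0h(\Vphi(s))\mu(ds)$ furnished by Assumption \ref{asp1}(3) annihilates the $-A_2\int_{-r}^0h(\Vphi(s))\mu(ds)$ coming from $\mathcal D G$, and the surviving $h(\vec x)$-terms merge into $-\big(A_1-A_2\int_{-r}^0e^{-\gamma s}\mu(ds)\big)h(\vec x)\le-Ah(\vec x)$, the coefficient being positive as required in the statement (it tends to $A_1-A_2>0$ as $\gamma\to0$). Applying \eqref{asp1-3} then gives
\[
\mathcal M(\Vphi)+\mathcal D G\le A_0\1_{\{\abs{\vec x}<M\}}-\gamma_0-Ah(\vec x)-\gamma G(\Vphi)-\gamma_b\sum_{i=1}^n\big(\abs{f_i}+g_i^2\big).
\]

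It remains to absorb the leftover $\rho$- and $p_0$-terms into the reserve $-\gamma_b\sum_i(\abs{f_i}+g_i^2)$ at the cost of a factor $\tfrac12$. Using $\abs{\rho_i}\le\abs{\Vrho}$ together with $\abs{\Vrho}<\min\{\gamma_b/2,\gamma_b/(4\sigma^*)\}$ bounds $\abs{\sum_i\rho_if_i}$ and $\tfrac12\abs{\sum_i\rho_i\sigma_{ii}g_i^2}$ by small multiples of $\sum_i\abs{f_i}$ and $\sum_ig_i^2$; for the quadratic correction one writes $x_i\partial_{x_i}W=a_i+\rho_i$ with $a_i:=\frac{c_ix_i}{1+\vec c^\top\vec x}\in[0,1]$, $\sum_ia_i<1$, and bounds the positive-semidefinite quadratic form by $2p_0\sigma^*\sum_ig_i^2$ via Cauchy--Schwarz, which is $\le\tfrac{\gamma_b}{4}\sum_ig_i^2$ once $p_0<\frac{\gamma_b}{8n\sigma^*}$. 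Collecting everything, the bracket is at most $A_0\1_{\{\abs{\vec x}<M\}}-\gamma_0-Ah(\vec x)-\gamma G(\Vphi)-\tfrac{\gamma_b}2\sum_i(\abs{f_i}+g_i^2)$, which is \eqref{est-LV}. I expect the main obstacle to be the horizontal-derivative computation of the double delay integral $G$, in particular keeping the two Leibniz boundary terms straight and confirming that the vertical derivative of $G$ genuinely vanishes; the subsequent absorption, though routine, must respect the smallness thresholds on $\abs{\Vrho}$ and $p_0$ in \eqref{cd-p0} exactly.
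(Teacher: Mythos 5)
Your argument is correct and follows essentially the same route as the paper's: note that Part II does not actually prove Lemma \ref{LV} but cites the companion paper \cite{NNY21}, and your decomposition into the horizontal derivative of the memory functional $G$ plus the vertical derivatives of $\log(1+\vec c^\top\vec x)+\sum_i\rho_i\log x_i$ reproduces exactly the quantity $Q_{\vec 0}$ that the paper itself defines in Section \ref{sec:ext}, with the same cancellation of $A_2\int_{-r}^0h(\Vphi(s))\mu(ds)$ against Assumption \ref{asp1}(3) and the same absorption of the $\Vrho$- and $p_0$-terms into $-\gamma_b\sum_i(\abs{f_i}+g_i^2)$. The only blemish is a dropped factor of $n$ in your Cauchy--Schwarz bound for the quadratic correction (it should read $2p_0n\sigma^*\sum_ig_i^2$, which is what the threshold $p_0<\gamma_b/(8n\sigma^*)$ is calibrated for), but this does not affect the conclusion.
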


\begin{thm}\label{existence}
	For any initial condition $\Bphi\in\C_+$, there exists a unique global solution of \eqref{Kol-Eq}. It remains in $\C_+$ $($resp. $\C_+^{\circ})$, provided $\Bphi\in\C_+$ $($resp. $\Bphi\in\C_+^{\circ})$.
	Moreover, for any $p_0,\Vrho$ satisfying
	condition \eqref{cd-p0}, we have
	\begin{equation}\label{EV-bounded-ne}
	\Ephi V_{\Vrho}^{p_0}(\vec X_t)\leq V_{\Vrho}^{p_0}(\Bphi)e^{A_0p_0t}.
	\end{equation}
	In addition, if $\rho_i\geq 0,\forall i$, then
	\begin{equation}\label{EV-bounded}
	\Ephi V_{\Vrho}^{p_0}(\vec X_t)\leq V_{\Vrho}^{p_0}(\Bphi)e^{-\gamma_0p_0t}+\bar M_{p_0,\Vrho},
	\end{equation}
	where $$\bar M_{p_0,\Vrho}:=\dfrac {A_0}{\gamma_0}\sup_{\Vphi\in \C_{V,M}}V_{\Vrho}^{p_0}(\Vphi)<\infty\text{ provided }\rho_i\geq 0\;\forall i,$$
	and
	$\C_{V, M}=\{\Vphi\in\C_+: A_2\gamma \int_{-r}^0\mu(ds)\int_s^0e^{\gamma(u-s)}h\big(\Vphi(u)\big)du\leq A_0 \text{ and } |\vec x|\leq M\}$.
\end{thm}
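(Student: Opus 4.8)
The plan is to read off all three conclusions from the single generator estimate in Lemma \ref{LV}, the positivity being a separate consequence of the multiplicative structure of \eqref{Kol-Eq}. First I would invoke the standard local theory for stochastic functional differential equations: since Assumption \ref{asp1}(2) makes $f_i,g_i$ locally Lipschitz on bounded sets of $\C_+$, there is a unique local strong solution up to an explosion time $\tau_e$. Positivity and interior-invariance are then immediate from the Kolmogorov form, since the $i$th coordinate can be written multiplicatively as $X_i(t)=\varphi_i(0)\exp\{\int_0^t(f_i(\vec X_s)-\tfrac12\sigma_{ii}g_i^2(\vec X_s))ds+\int_0^t g_i(\vec X_s)dE_i(s)\}$; hence $X_i(t)>0$ for all $t<\tau_e$ whenever $\varphi_i(0)>0$, and $X_i(t)=0$ for $t\geq0$ when $\varphi_i(0)=0$, so $\vec X_t$ stays in $\C_+$ (resp.\ $\C_+^\circ$) up to $\tau_e$. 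In particular $V_{\Vrho}(\vec X_t)$ is finite for $t<\tau_e$ even when some $\rho_i<0$, which is what makes \eqref{EV-bounded-ne} meaningful for those $\Vrho$.

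Next I would establish global existence together with \eqref{EV-bounded-ne}. Fix $p_0,\Vrho$ as in \eqref{cd-p0} and set $\tau_k:=\inf\{t\geq0:|\vec X(t)|\geq k\text{ or }\min_iX_i(t)\leq1/k\}$, so that $\tau_k\uparrow\tau_e$. Applying the functional It\^o/Dynkin formula of \cite{NNY21} to $V_{\Vrho}^{p_0}(\vec X_{t\wedge\tau_k})$ and discarding from the right-hand side of \eqref{est-LV} the three manifestly nonnegative quantities $Ah(\vec x)$, $A_2\gamma\int_{-r}^0\mu(ds)\int_s^0e^{\gamma(u-s)}h(\Vphi(u))du$ and $\tfrac{\gamma_b}2\sum_i(|f_i|+g_i^2)$, Lemma \ref{LV} gives the crude bound $\Lom V_{\Vrho}^{p_0}\leq p_0A_0V_{\Vrho}^{p_0}$. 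Taking expectations and applying Gronwall's inequality yields $\Ephi V_{\Vrho}^{p_0}(\vec X_{t\wedge\tau_k})\leq V_{\Vrho}^{p_0}(\Bphi)e^{A_0p_0t}$, uniformly in $k$. Choosing for instance $\Vrho=\vec 0$, so that $V_{\vec 0}\to\infty$ as $|\vec x|\to\infty$, this uniform bound forces $\PPphi(\tau_e\leq t)=0$ for every $t$, i.e.\ $\tau_e=\infty$ and the solution is global; Fatou's lemma as $k\to\infty$ then promotes the estimate to \eqref{EV-bounded-ne} for every admissible $\Vrho$.

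For the sharper bound \eqref{EV-bounded} under $\rho_i\geq0$, I would retain the negative exponential term in \eqref{est-LV} and argue against the set $\C_{V,M}$. Keeping $-\gamma_0$ and $-A_2\gamma\int_{-r}^0\mu(ds)\int_s^0e^{\gamma(u-s)}h(\Vphi(u))du$, one checks that if $\Vphi\notin\C_{V,M}$ then either $|\vec x|\geq M$, so the indicator $\1_{\{|\vec x|<M\}}$ vanishes, or $A_2\gamma\int\!\!\int e^{\gamma(u-s)}h>A_0$, so $A_0\1_{\{|\vec x|<M\}}$ is absorbed by the negative integral term; in both cases $\Lom V_{\Vrho}^{p_0}\leq-\gamma_0p_0V_{\Vrho}^{p_0}$. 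On $\C_{V,M}$ the polynomial factor $(1+\vec c^\top\vec x)\prod_ix_i^{\rho_i}$ is bounded because $|\vec x|\leq M$ and $\rho_i\geq0$, while the exponential factor is bounded by $e^{A_0/\gamma}$ because the defining constraint $A_2\gamma\int\!\!\int\leq A_0$ forces the exponent $A_2\int\!\!\int\leq A_0/\gamma$; hence $\sup_{\C_{V,M}}V_{\Vrho}^{p_0}<\infty$, equal to $\tfrac{\gamma_0}{A_0}\bar M_{p_0,\Vrho}$. Combining the two cases gives $\Lom V_{\Vrho}^{p_0}\leq-\gamma_0p_0V_{\Vrho}^{p_0}+A_0p_0\sup_{\C_{V,M}}V_{\Vrho}^{p_0}$, and integrating the linear differential inequality $\tfrac{d}{dt}\Ephi V_{\Vrho}^{p_0}(\vec X_t)\leq-\gamma_0p_0\Ephi V_{\Vrho}^{p_0}(\vec X_t)+A_0p_0\sup_{\C_{V,M}}V_{\Vrho}^{p_0}$ produces exactly \eqref{EV-bounded}.

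The main obstacle I anticipate is not the Gronwall bookkeeping but the rigorous justification of the Dynkin step in the infinite-dimensional, path-dependent setting: one must verify that the functional It\^o formula of \cite{NNY21} genuinely applies to the non-anticipative functional $V_{\Vrho}$, whose horizontal and vertical (Dupire) derivatives are precisely what produce the operator $\Lom$ already estimated in Lemma \ref{LV}, and one must carry out the localization carefully so that passing from the stopped identity to the differential inequality remains valid while the negative exponents $\rho_i<0$, used to keep $V_{\Vrho}$ finite along interior trajectories, do not spoil the integrability at any stage.
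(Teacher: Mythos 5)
Your proposal is correct and follows exactly the route the paper intends: the theorem is quoted from Part I \cite{NNY21} with its proof deferred there, and the standard argument is precisely your combination of local Lipschitz well-posedness, the multiplicative representation for positivity, and the functional It\^o/Gronwall machinery applied to the Lyapunov estimate of Lemma \ref{LV}, with the crude bound $\Lom V_{\Vrho}^{p_0}\leq A_0p_0V_{\Vrho}^{p_0}$ giving \eqref{EV-bounded-ne} and non-explosion via $V_{\vec 0}$, and the dichotomy on $\C_{V,M}$ giving \eqref{EV-bounded} with the stated constant $\bar M_{p_0,\Vrho}$. The only cosmetic point is your localization time: including the event $\min_iX_i(t)\leq 1/k$ makes $\tau_k=0$ for boundary initial data, so one should either localize only on $\|\vec X_t\|\geq k$ (the coefficients are Lipschitz on bounded sets of all of $\C_+$) or observe that for $\rho_i<0$ and $\varphi_i(0)=0$ the bound \eqref{EV-bounded-ne} is vacuous.
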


\begin{lm}\label{X-bounded}
	For any $R_1>0$, $T>r$, $\eps>0$, there exists an $R_2>0$ such that
	\begin{equation*}
	\PPphi\Big\{\norm{\vec X_t}\leq R_2,\;\forall t\in [r,T]\Big\}>1-\eps,
	\end{equation*}
	for any initial point $\Bphi$ satisfying $V_{\vec 0}(\Bphi)<R_1$, where $V_{\vec 0}$ is defined as in Lemma {\rm\ref{LV}} corresponding to $\Vrho=\vec 0=(0,\dots,0).$
\end{lm}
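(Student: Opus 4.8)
The plan is to reduce the supremum over the sliding segment window to a supremum of a scalar Lyapunov functional along the path, and then control that supremum with a supermartingale maximal inequality.

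First I would reduce the claim to a path supremum. For $t\in[r,T]$ the segment $\vec X_t$ only probes the solution on $[t-r,t]\subset[0,T]$, so the event $\set{\norm{\vec X_t}\le R_2\ \text{for all}\ t\in[r,T]}$ coincides with $\set{\sup_{0\le u\le T}\abs{\vec X(u)}\le R_2}$; in particular the initial data on $[-r,0]$ never enters. Since the solution stays in $\C_+$ by Theorem \ref{existence}, we have $\vec c^\top\Vphi(0)=\sum_i c_i\varphi_i(0)\ge c_*\abs{\Vphi(0)}$ with $c_*:=\min_i c_i>0$, and because the exponential factor in $V_{\vec 0}$ is $\ge 1$ (as $h\ge 1$), we obtain the coercivity bound $V_{\vec 0}(\Vphi)\ge 1+\vec c^\top\Vphi(0)\ge c_*\abs{\Vphi(0)}$. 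Applying this with $\Vphi=\vec X_u$ gives $\abs{\vec X(u)}\le V_{\vec 0}(\vec X_u)/c_*$, so it suffices to find $R_2$ with $\PPphi\set{\sup_{0\le u\le T}V_{\vec 0}^{p_0}(\vec X_u)> (c_* R_2)^{p_0}}<\eps$, where $p_0$ is fixed as in Lemma \ref{LV}.

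Next I would exhibit a supermartingale. Set $Z_t:=e^{-A_0 p_0 t}V_{\vec 0}^{p_0}(\vec X_t)$. By the functional It\^o formula and estimate \eqref{est-LV} of Lemma \ref{LV} with $\Vrho=\vec 0$, every term on the right-hand side other than $A_0\1_{\{|\vec x|<M\}}$ is nonpositive, whence $\Lom V_{\vec 0}^{p_0}\le A_0 p_0 V_{\vec 0}^{p_0}$; consequently the finite-variation part of $Z_t$ has nonpositive density $e^{-A_0 p_0 t}\big(\Lom V_{\vec 0}^{p_0}(\vec X_t)-A_0 p_0 V_{\vec 0}^{p_0}(\vec X_t)\big)\le 0$. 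Thus $Z_t$ is the sum of a nonnegative local martingale and a nonincreasing process, hence a nonnegative supermartingale; its integrability (and the genuine, not merely local, supermartingale property) follows from the moment bound $\Ephi V_{\vec 0}^{p_0}(\vec X_t)\le V_{\vec 0}^{p_0}(\Bphi)e^{A_0 p_0 t}$ of Theorem \ref{existence}, if necessary after localizing along $\tau_N:=\inf\set{t\ge 0:\norm{\vec X_t}\ge N}$ and letting $N\to\infty$. Note $Z_0=V_{\vec 0}^{p_0}(\Bphi)<R_1^{p_0}$ for every admissible initial datum. I would then invoke Doob's maximal inequality for nonnegative supermartingales, $\lambda\,\PPphi\set{\sup_{0\le t\le T}Z_t\ge\lambda}\le\Ephi Z_0\le R_1^{p_0}$, and combine it with $\sup_{0\le t\le T}V_{\vec 0}^{p_0}(\vec X_t)\le e^{A_0 p_0 T}\sup_{0\le t\le T}Z_t$: choosing $\lambda$ marginally larger than $R_1^{p_0}/\eps$ and setting $R_2:=c_*^{-1}\big(e^{A_0 p_0 T}\lambda\big)^{1/p_0}$ yields $\PPphi\set{\sup_{0\le u\le T}\abs{\vec X(u)}> R_2}<\eps$. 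Crucially $R_2$ depends only on $R_1,T,\eps$ and the structural constants, not on $\Bphi$, which is the required uniformity.

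The delicate point is rigorously justifying the supermartingale property and the passage to the maximal inequality in this delay setting: the functional It\^o formula produces a priori only a local martingale whose diffusion coefficient is unbounded, so the passage from local to true supermartingale (and the interchange of limits as $\tau_N\to\infty$) must be handled through the uniform moment bound of Theorem \ref{existence} together with Fatou's lemma. A secondary check is that the history-dependent exponential factor of $V_{\vec 0}$, while harmless for the coercivity lower bound since it is $\ge 1$, must remain nonpositive when differentiated in the drift estimate, which is exactly why one keeps $\gamma>0$ in Lemma \ref{LV} so that the term $-A_2\gamma\int_{-r}^0\mu(ds)\int_s^0 e^{\gamma(u-s)}h(\Vphi(u))\,du$ is indeed nonpositive.
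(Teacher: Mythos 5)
Your proof is correct, and it follows the route the paper intends: the paper itself defers the proof of this lemma to Part I \cite{NNY21}, where the argument is precisely the Lyapunov-function/exit-time estimate built on Lemma \ref{LV} and the moment bound \eqref{EV-bounded-ne} of Theorem \ref{existence} — equivalent to your supermartingale maximal inequality for $e^{-A_0p_0t}V_{\vec 0}^{p_0}(\vec X_t)$, combined with the coercivity $V_{\vec 0}(\Vphi)\geq 1+\vec c^\top\Vphi(0)$. Your reduction of $\sup_{t\in[r,T]}\norm{\vec X_t}$ to $\sup_{u\in[0,T]}\abs{\vec X(u)}$ and your handling of the localization are both sound, so there is nothing to correct.
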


\begin{lm}\label{Holder}
	There is a
	sufficiently small $\alpha>0$
	such that
	for any $R>0$ and $\eps>0$, there exists $R_3=R_3(R,\eps)>0$ satisfying
	\begin{equation}
	\text {if }\norm{\Bphi}\leq R\text{ then }\PPphi\left\{\|\vec X_{t}\|_{2\alpha}\leq R_3\;\forall t\in[r,3r]\right\}\geq1-\frac{\eps}2.
	\end{equation}
	As a consequence,
	for any $R>0$ and $\eps>0$, there exists an $R_4=R_4(\eps,R)>0$  satisfying that
	\begin{equation}
	\text {if }V_{\vec 0}(\Bphi)\leq R\text{ then }\PPphi\left\{\|\vec X_{t}\|_{2\alpha}\leq R_4\;\forall t\in[2r,3r]\right\}\geq1-\eps.
	\end{equation}
\end{lm}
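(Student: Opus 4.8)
The plan is to prove that the path $t\mapsto\vec X(t)$ is $2\alpha$-H\"older continuous on $[0,3r]$ with a quantitatively controlled H\"older norm, and then to read off the segment bound. The restriction to $t\in[r,3r]$ in the first assertion is precisely what makes each segment $\vec X_t$ live over $[t-r,t]\subseteq[0,3r]$, so that it never sees the merely continuous initial datum $\Bphi$ on $[-r,0]$; hence controlling the H\"older norm of the path on $[0,3r]$ suffices. First I would localize. Since $V_{\vec 0}$ is continuous and finite on bounded subsets of $\C_+$ and $h\geq 1$, the hypothesis $\norm{\Bphi}\leq R$ gives $V_{\vec 0}(\Bphi)\leq R_1$ for some $R_1=R_1(R)$, and Lemma~\ref{X-bounded} with $T=3r$ yields $R_2>0$ with $\PPphi\{\norm{\vec X_t}\leq R_2\ \forall t\in[r,3r]\}>1-\eps/4$; on this event, equivalently, $|\vec X(v)|\leq R_2$ for all $v\in[0,3r]$. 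Introduce $\tau:=\inf\{t\geq 0:|\vec X(t)|>R_2\}$. On $[0,\tau]$ one has $\norm{\vec X_v}\leq\max(R_2,R)$ (the segment may reach back into $\Bphi$), so by Assumption~\ref{asp1}(2) the coefficients $f_i,g_i$ are bounded there, and the integrands $X_if_i(\vec X_\cdot),X_ig_i(\vec X_\cdot)$ are bounded by a constant $K=K(R_2,R)$.

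Next I would estimate the increments of the stopped process. From
\[
X_i(t\wedge\tau)-X_i(s\wedge\tau)=\int_{s\wedge\tau}^{t\wedge\tau}X_i(v)f_i(\vec X_v)\,dv+\int_{s\wedge\tau}^{t\wedge\tau}X_i(v)g_i(\vec X_v)\,dE_i(v),
\]
H\"older's inequality on the drift and the Burkholder--Davis--Gundy inequality on the martingale part (the quadratic variation of $E_i$ having rate $\sigma_{ii}$) give, for every integer $m\geq1$ and all $0\leq s<t\leq 3r$,
\[
\Ephi\big|X_i(t\wedge\tau)-X_i(s\wedge\tau)\big|^{2m}\leq C\,|t-s|^{m},
\]
with $C=C(m,K,\Sigma,r)$, since the drift contributes $|t-s|^{2m}\leq(3r)^m|t-s|^m$ and the martingale part $|t-s|^m$. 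I fix $\alpha<1/4$ once and for all and choose $m$ so large that $2\alpha<\frac{m-1}{2m}$; the quantitative Kolmogorov--Chentsov criterion applied to $\vec X(\cdot\wedge\tau)$ on $[0,3r]$ then bounds $\Ephi\big[\big(\sup_{0\leq s<t\leq 3r}|\vec X(t\wedge\tau)-\vec X(s\wedge\tau)|/|t-s|^{2\alpha}\big)^{2m}\big]$ by a finite constant, and Chebyshev's inequality yields $R_3$ keeping this supremum below $R_3$ off an event of probability $\leq\eps/4$. On the Lemma~\ref{X-bounded} event one has $\tau\geq 3r$, so $\vec X(\cdot\wedge\tau)$ agrees with $\vec X$ on $[0,3r]$; intersecting the two events of probability $\geq1-\eps/4$, and splitting $\norm{\vec X_t}_{2\alpha}$ for $t\in[r,3r]$ into its supremum part ($\leq R_2$) and its H\"older seminorm over $[t-r,t]\subseteq[0,3r]$ ($\leq R_3$), gives the first assertion after relabelling the constant.

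For the consequence I would restart at time $r$. Given $V_{\vec 0}(\Bphi)\leq R$, Lemma~\ref{X-bounded} with $T=3r$ and $R_1=R+1$ gives $R_2$ with $\PPphi\{\norm{\vec X_r}\leq R_2\}\geq1-\eps/2$. Conditioning on $\F_r$ and using that the segment process is a time-homogeneous Markov process, the first assertion applied to the solution started from $\vec X_r$ (of norm $\leq R_2$ on the good event), with tolerance $\eps/2$, controls $\norm{\vec X_{r+t}}_{2\alpha}$ for $t\in[r,3r]$, hence $\norm{\vec X_s}_{2\alpha}$ for $s\in[2r,4r]\supseteq[2r,3r]$, by $R_4:=R_3(R_2,\eps/2)$, off a conditional event of probability $\leq\eps/2$. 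Multiplying the two probabilities gives at least $(1-\eps/2)^2\geq1-\eps$, which is exactly the second assertion.

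The main obstacle is the increment estimate combined with the localization: because $f_i,g_i$ are only locally Lipschitz, the raw increments of $\vec X$ need not have uniformly bounded moments, so the stopping time $\tau$ (equivalently, the high-probability confinement event of Lemma~\ref{X-bounded}) is indispensable before the Burkholder--Davis--Gundy and Kolmogorov--Chentsov machinery can be invoked. One must also verify that the attainable H\"older exponent $\frac{m-1}{2m}$ can be pushed above the target $2\alpha$, which is what forces $\alpha<1/4$ together with $m$ large.
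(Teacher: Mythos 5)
Your argument is correct, and it is the standard route one would expect here: localize via Lemma \ref{X-bounded} so that the coefficients (locally Lipschitz by Assumption \ref{asp1}(2)) and hence the integrands are bounded up to the exit time $\tau$, derive the increment bound $\Ephi\abs{X_i(t\wedge\tau)-X_i(s\wedge\tau)}^{2m}\leq C\abs{t-s}^m$ from H\"older and Burkholder--Davis--Gundy, invoke the quantitative Kolmogorov--Chentsov criterion to control the $2\alpha$-H\"older seminorm on $[0,3r]$ for $\alpha<1/4$, and restart at time $r$ via the Markov property for the second claim. The present paper does not reprove this lemma but refers to Part I \cite{NNY21}, and your proof supplies exactly the missing details, including the two points that genuinely require care: the restriction $t\in[r,3r]$ so the segment never reads the merely continuous initial datum, and the indispensability of the localization before the moment estimates.
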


\begin{prop}\label{theorem-2.4}
	The following results hold.
	\begin{itemize}
		\item [\rm(i)]
		Let $\rho_1^{(3)}$ be a fixed constant satisfying $0<\rho_1^{(3)}<\min\left\{\frac{\gamma_b}2, \frac1n, \frac{\gamma_b}{4\sigma^*}\right\}$.
		For any $T>r$ and $m>0$ there exists
		a finite constant $K_{m,T}$ such that
		$$\Ephi\norm{ X_{i,t}}^{p_0\rho_1^{(3)}}\leq K_{m,T}\phi_i^{p_0\rho_1^{(3)}}(0),\;\forall t\in [r,T], i=1,\dots,n,$$
		given that $$\abs{\Bphi(0)}+\int_{-r}^0\mu(ds)\int_s^0e^{\gamma(u-s)}h\big(\Bphi(u)\big)du<m,$$
		where $\vec X_t=:(X_{1,t},\dots,X_{n,t})$ and $\Bphi=:(\phi_1,\dots,\phi_n)$ is the initial value.
		\item[\rm(ii)] For any $T>r$, $\eps>0$, $R>0$, there exists an $\eps_1>0$ such that
		\begin{equation}\label{cont-ini}
		\PP\left\{\big\|{\vec X^{\Bphi_1}_T-\vec X^{\Bphi_2}_T}\big\|\leq \eps\right\}\geq 1-\eps\text{ whenever }V_{\vec 0}(\Bphi_i)<R, \norm{\Bphi_1-\Bphi_2}\leq \eps_1.
		\end{equation}
		Moreover, the solution $(\vec X_t)$ has the Feller property in $\C_+$.
	\end{itemize}
\end{prop}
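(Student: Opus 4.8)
The plan is to prove the two parts separately, both resting on the Lyapunov estimate \eqref{est-LV} of Lemma \ref{LV} and the moment bound \eqref{EV-bounded-ne} of Theorem \ref{existence}.

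For (i) I would apply the Lyapunov functional $V_{\Vrho}$ with the single--coordinate exponent vector $\Vrho=\rho_1^{(3)}\vec e_i$ (the vector whose only nonzero entry is $\rho_1^{(3)}$ in the $i$-th slot), which satisfies the smallness constraint \eqref{cd-p0} by the hypothesis $0<\rho_1^{(3)}<\min\{\gamma_b/2,1/n,\gamma_b/(4\sigma^*)\}$, so that $q:=p_0\rho_1^{(3)}$ is the effective power on $x_i$. Two structural observations drive the argument. First, the factorization $V_{\Vrho}^{p_0}(\Vphi)=x_i^{q}\,V_{\vec 0}^{p_0}(\Vphi)$, which at the initial datum reads $V_{\Vrho}^{p_0}(\Bphi)=\phi_i^{q}(0)\,V_{\vec 0}^{p_0}(\Bphi)$; under the hypothesis $\abs{\Bphi(0)}+\int_{-r}^0\mu(ds)\int_s^0 e^{\gamma(u-s)}h(\Bphi(u))\,du<m$ the factor $V_{\vec 0}^{p_0}(\Bphi)$ is bounded by a constant $C(m)$, so every a priori estimate that is linear in $V_{\Vrho}^{p_0}(\Bphi)$ automatically carries the factor $\phi_i^{q}(0)$. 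Second, the pointwise lower bound $V_{\Vrho}^{p_0}(\Vphi)\ge x_i^{q}$ (both $(1+\vec c^\top\vec x)^{p_0}\ge1$ and the exponential factor are $\ge1$ since $h\ge1$), which gives
\[
\norm{X_{i,t}}^{q}=\sup_{u\in[t-r,t]}X_i(u)^{q}\le\sup_{u\in[t-r,t]}V_{\Vrho}^{p_0}(\vec X_u).
\]
It therefore suffices to bound $\Ephi\sup_{u\in[t-r,t]}V_{\Vrho}^{p_0}(\vec X_u)$ by $K_{m,T}\phi_i^{q}(0)$.

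To obtain the supremum bound I would run the functional It\^o/Dynkin formula for $V_{\Vrho}^{p_0}(\vec X_u)$ on the window $[t-r,t]$, bound the drift crudely by $\Lom V_{\Vrho}^{p_0}\le p_0A_0V_{\Vrho}^{p_0}$ (dropping the negative terms in \eqref{est-LV}), and pass to the supremum:
\[
\sup_{u\in[t-r,t]}V_{\Vrho}^{p_0}(\vec X_u)\le V_{\Vrho}^{p_0}(\vec X_{t-r})+p_0A_0\int_{t-r}^t V_{\Vrho}^{p_0}(\vec X_v)\,dv+\sup_{u\in[t-r,t]}\abs{N(u)-N(t-r)},
\]
where $N$ is the martingale part. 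The first two terms have expectation at most $C(m,T)\phi_i^{q}(0)$ by \eqref{EV-bounded-ne}. For the martingale term Burkholder--Davis--Gundy reduces matters to the quadratic variation, whose density is of order $V_{\Vrho}^{2p_0}\sum_j g_j^2(\vec X_v)$ (the factors $x_j\partial_{x_j}\log V_{\Vrho}=\vec c_jx_j/(1+\vec c^\top\vec x)+\rho_j$ are bounded). Writing $V_{\Vrho}^{2p_0}=V_{\Vrho}^{p_0}\cdot V_{\Vrho}^{p_0}$ and using Young's inequality lets me split off an $\eps\,\Ephi\sup_u V_{\Vrho}^{p_0}$ term, absorbed into the left-hand side, against $\eps^{-1}\Ephi\int_{t-r}^t V_{\Vrho}^{p_0}\sum_j g_j^2\,dv$. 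This last integral is controlled by the \emph{negative} term $-\tfrac{\gamma_b}{2}\sum_j(\abs{f_j}+g_j^2)$ in \eqref{est-LV}: rearranging the Dynkin identity for $V_{\Vrho}^{p_0}$ gives $\tfrac{p_0\gamma_b}{2}\Ephi\int_0^t V_{\Vrho}^{p_0}\sum_j(\abs{f_j}+g_j^2)\,dv\le V_{\Vrho}^{p_0}(\Bphi)+p_0A_0\int_0^t\Ephi V_{\Vrho}^{p_0}\,dv\le C(m,T)\phi_i^{q}(0)$. Collecting terms yields the claim. The main obstacle is precisely this absorption step, which is what forces the use of the full strength of Lemma \ref{LV}, namely its $\gamma_b$-term, rather than the crude bound alone.

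For (ii) I would couple the two solutions by driving $\vec X^{\Bphi_1},\vec X^{\Bphi_2}$ with the same Brownian motion $\vec B$, set $\Delta(t)=\vec X^{\Bphi_1}(t)-\vec X^{\Bphi_2}(t)$, and introduce the exit time $\tau_{R_2}=\inf\{t\ge0:\norm{\vec X^{\Bphi_1}_t}\vee\norm{\vec X^{\Bphi_2}_t}>R_2\}$. On $[0,T\wedge\tau_{R_2}]$ the coefficients are Lipschitz with a fixed constant $L_{R_2}$ (Assumption \ref{asp1}(2)), so applying It\^o to $\abs{\Delta(t)}^2$, together with the local Lipschitz bounds, BDG, and Gronwall, gives
\[
\E\sup_{u\le T\wedge\tau_{R_2}}\abs{\Delta(u)}^2\le C(R_2,T)\,\norm{\Bphi_1-\Bphi_2}^2.
\]
By Lemma \ref{X-bounded} one chooses $R_2$ so large that $\PP\{\tau_{R_2}\le T\}<\eps/2$ for both initial data, whose segments over $[0,T]$ remain bounded with high probability; then Chebyshev's inequality applied to the Gronwall bound, with $\eps_1$ chosen so that $C(R_2,T)\eps_1^2/\eps^2<\eps/2$, yields $\PP\{\sup_{u\in[T-r,T]}\abs{\Delta(u)}>\eps\}<\eps$, which is exactly \eqref{cont-ini} since $\norm{\vec X^{\Bphi_1}_T-\vec X^{\Bphi_2}_T}=\sup_{u\in[T-r,T]}\abs{\Delta(u)}$. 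The delicate point is the initial sub-interval $[0,r]$, where the segments $\vec X^{\Bphi_k}_t$ reach back into $\Bphi_1,\Bphi_2$; there one uses that the two initial data lie in a common bounded subset of $\C$, so that the localization ball contains them. Finally, the Feller property follows at once: if $\Bphi_k\to\Bphi$ in $\C_+$ then $\norm{\Bphi_k}$ and $V_{\vec 0}(\Bphi_k)$ stay bounded, \eqref{cont-ini} shows $\vec X^{\Bphi_k}_T\to\vec X^{\Bphi}_T$ in probability, and for bounded continuous $F$ the bounded convergence theorem gives $\E_{\Bphi_k}F(\vec X_T)\to\E_{\Bphi}F(\vec X_T)$.
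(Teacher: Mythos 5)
The paper does not actually prove Proposition \ref{theorem-2.4} here: it is restated from Part I with the proof deferred to \cite{NNY21}, so there is no in-paper argument to compare against line by line. On its own terms, your treatment of part (i) is sound and is the natural route: the factorization $V_{\Vrho}^{p_0}=x_i^{q}V_{\vec 0}^{p_0}$ with $\Vrho=\rho_1^{(3)}\vec e_i$, the lower bound $V_{\Vrho}^{p_0}\geq x_i^{q}$, the crude drift bound $\Lom V_{\Vrho}^{p_0}\leq p_0A_0V_{\Vrho}^{p_0}$ for the window $[t-r,t]$, and the BDG/Young absorption with the quadratic variation controlled by the $-\tfrac{\gamma_b}{2}\sum_j(\abs{f_j}+g_j^2)$ term of \eqref{est-LV} all fit together correctly (modulo the routine localization needed to justify the absorption and the Dynkin rearrangement, which you should at least mention).

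Part (ii) contains one genuine gap, and it is exactly at the step you yourself flag as delicate. You assert that $V_{\vec 0}(\Bphi_i)<R$ places the two initial data "in a common bounded subset of $\C$," so that the local Lipschitz constants of $f_i,g_i$ apply on $[0,r]$. This is false in general: $V_{\vec 0}(\Bphi)=(1+\vec c^\top\Bphi(0))\exp\{A_2\int_{-r}^0\mu(ds)\int_s^0e^{\gamma(u-s)}h(\Bphi(u))du\}$ controls only $\abs{\Bphi(0)}$ and a $\mu$-integrated functional of the history; if, say, $\mu$ is concentrated near $s=0$, then $V_{\vec 0}(\Bphi)<R$ is compatible with $\norm{\Bphi}$ arbitrarily large. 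This is precisely why Lemma \ref{X-bounded} only asserts $\norm{\vec X_t}\leq R_2$ for $t\in[r,T]$ and not on $[0,r)$. Consequently your stopping time $\tau_{R_2}$ cannot be guaranteed to exceed $r$ with high probability, and the Lipschitz constant $L_{R_2}$ entering the Gronwall estimate on the initial sub-interval is not uniform over $\{V_{\vec 0}(\Bphi)<R\}$. You need either an additional localization of the initial histories (e.g., running the coupling argument on the event that the segments remain in a sup-norm ball, whose probability must be shown close to $1$ under the stated hypothesis, or restricting to sup-norm bounded initial data), or a different handling of $[0,r]$. Note that for the Feller property itself your argument survives, since a sequence $\Bphi_k\to\Bphi$ in $\C_+$ is automatically sup-norm bounded, so the weaker version of \eqref{cont-ini} with $\norm{\Bphi_i}\leq R$ in place of $V_{\vec 0}(\Bphi_i)<R$ would suffice for that conclusion; but as written your proof does not establish \eqref{cont-ini} under the hypothesis actually stated.
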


\subsection{Random occupation measures: tightness and convergence properties}
Next, we deal with the tightness and uniform integrability of the random normalized occupation measures, which are defined by
\begin{equation*}
\wdt\Pi_t(\cdot):=\dfrac1t\int_0^t\1_{\{\vec X_s\in\cdot\}}ds,\,t>0.
\end{equation*}

\begin{lm}\label{lm4.5}
	Suppose that Assumptions  {\rm\ref{asp1}} and {\rm\ref{a.extn2}} are satisfied. Then the following results hold
	\begin{itemize}
		\item There is  a $\wdt G>0$ such that $\text{ for all }\Bphi\in\C_+$
		\begin{align*}
		\PPphi\Bigg\{\limsup_{T\to\infty}&\dfrac1T\int_0^T \left[\left(1+\sum_i c_iX_i(t)\right)^{p_2}h(\vec X(t))\right.\\ &\left.+\int_{-r}^0\left(1+\sum_{i}c_iX_i(t+s)\right)^{p_2}h(\vec X(t+s))\mu(ds)\right]dt\leq \wdt G\Bigg\}=1,
		\end{align*}
		where $p_2$ is as in Assumption {\rm\ref{a.extn2}}.
		\item
		Suppose that we have a sample path  of $\vec X$ satisfying
		$$
		\begin{aligned}
		\limsup_{T\to\infty}\dfrac1T\int_0^T &\left[\left(1+\sum_i c_iX_i(t)\right)^{p_2}h(\vec X(t))\right.\\ &\qquad\left.+\int_{-r}^0\left(1+\sum_{i}c_iX_i(t+s)\right)^{p_2}h(\vec X(t+s))\mu(ds)\right]dt\leq \wdt G,
		\end{aligned}
		$$
		and that there is a sequence $(T_k)_{k\in\N}\subset\R^n_+$ such that
		$\lim_{k\to\infty}T_k=\infty$ and
		$\left\{\wdt\Pi_{T_k}(\cdot)\right\}_{k\in\N}$ converges weakly to an invariant probability measure $\pi$ of $\vec X$
		when $k\to\infty$ .
		Then for this sample path, we have
		$$\int_{\C_+}K(\Vphi)\wdt\Pi_{T_k}(d\Vphi)\to \int_{\C_+}K(\Vphi)\pi(d\Vphi),$$
		for any continuous function $K:\C_+\to\R$ satisfying $\forall\Vphi\in \C_+, 0<p<p_2,$
		$$|K(\Vphi)|<C_K\left[\left(1+\vec c^\top\vec x\right)^{p}h(\vec x)+\int_{-r}^0\left(1+\vec c^\top\Vphi(s)\right)^{p}h(\Vphi(s))\mu(ds)\right],$$
		with $C_K$ being a positive constant.
		\item There is a constant $\hat K_1>1$ such that
		\begin{equation}\label{e1-lm4.7}
		\PPphi\left\{\liminf_{t\to\infty} \dfrac{1}t\int_0^t\1_{\{\|\vec X_s\|\leq \hat K_1\}}ds\geq\dfrac12\right\}=1,\,\Bphi\in\C_+.
		\end{equation}
		Moreover, for any $\eps_1$ and $\eps_2>0$, there is a $\beta>0$ such that
		for each $i=1,\dots,n$,
		\begin{equation}\label{e2-lm4.7}
		\PPphi\{X_i(t)>\beta\,,\forall\, t\in[0,n^*T_e]\}>1-\eps_1\,\text{ if }\, \Bphi\in\C_+,V_{\vec 0}(\Bphi)\leq \hat K_1, \phi_i(0)>\eps_2,
		\end{equation}
		where $n^*$ and $T_e$ are as in Lemma \ref{lm4.2}.
	\end{itemize}
\end{lm}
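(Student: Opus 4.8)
The plan is to establish the three assertions in order, the first carrying the main quantitative content.

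\emph{Part 1.} I would apply the functional It\^o formula to $U(t):=(1+\vec c^\top\vec X(t))^{p_2}$. With $Y(t):=1+\vec c^\top\vec X(t)$ and $d\langle E_i,E_j\rangle=\sigma_{ij}\,dt$, the drift of $dU$ equals $p_2Y^{p_2-1}\sum_i c_iX_if_i(\vec X_t)+\tfrac12 p_2(p_2-1)Y^{p_2-2}\sum_{i,j}\sigma_{ij}c_ic_jX_iX_jg_ig_j$, which I reorganize as $p_2$ times the bracketed expression on the left of \eqref{asp4-3a} plus $\tfrac12 p_2^2Y^{p_2-2}\sum_{i,j}\sigma_{ij}c_ic_jX_iX_jg_ig_j$. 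Assumption \ref{a.extn2} bounds the first piece; for the second I use $c_iX_i\le Y$ to get $Y^{p_2-2}\sum_{i,j}\sigma_{ij}c_ic_jX_iX_j|g_ig_j|\le n\sigma^*\,Y^{2p_2}\sum_i g_i^2$ and then invoke \eqref{asp4-3b}. The only recalcitrant term is the positive delayed integral $\int_{-r}^0(1+\vec c^\top\vec X(t+s))^{p_2}h(\vec X(t+s))\mu(ds)$; integrating in $t$ and substituting $u=t+s$ (Fubini) bounds its time integral by $\int_0^T(\cdots)(u)\,du$ plus a finite term depending only on $\Bphi$. For $p_2$ small enough that $B_1-B_2>p_2 n\sigma^* B_3$, the net coefficient $\theta$ of $\int_0^T Y^{p_2}h\,dt$ is strictly negative, and since $U(T)\ge0$ I obtain $\theta\int_0^T Y(t)^{p_2}h(\vec X(t))\,dt\le p_2B_0T+C(\Bphi)+M(T)$, where $M$ is the It\^o integral arising from $dU$. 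Its quadratic variation is again dominated by a constant multiple of $\int_0^T(Y^{p_2}h+\int_{-r}^0\cdots\mu(ds))\,dt$, so the strong law of large numbers for continuous local martingales gives $M(T)/T\to0$ a.s.\ (separating $\langle M\rangle_\infty<\infty$ from $\langle M\rangle_\infty=\infty$). Dividing by $T$, letting $T\to\infty$, and folding the delayed integral back in through the same substitution yields the asserted bound with an explicit $\wdt G$.

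\emph{Part 2.} Set $\Psi(\Vphi):=(1+\vec c^\top\vec x)^{p_2}h(\vec x)+\int_{-r}^0(1+\vec c^\top\Vphi(s))^{p_2}h(\Vphi(s))\mu(ds)$, so that $\int_{\C_+}\Psi\,d\wdt\Pi_{T_k}=\frac1{T_k}\int_0^{T_k}\Psi(\vec X_t)\,dt\le\wdt G$ eventually and $\wdt\Pi_{T_k}\to\pi$ weakly. Bounded continuous functions already pass to the limit, so it suffices to show $\{K\}$ is uniformly integrable for $\{\wdt\Pi_{T_k}\}$. The conceptual point is that, because $h\ge1$ and $\vec x\ge\vec 0$, the quantity $(1+\vec c^\top\vec x)^ph(\vec x)$ can be large only when $1+\vec c^\top\vec x$ is large (a bounded $\vec x$ keeps $h(\vec x)$ bounded by continuity); hence the ratio $(1+\vec c^\top\vec x)^{p_2-p}$ of the $p_2$-weight to the $p$-weight tends to $\infty$ precisely where the $p$-weight blows up. Thus for any $\eps>0$ there is $L$ with $(1+\vec c^\top\vec x)^ph(\vec x)\le\eps(1+\vec c^\top\vec x)^{p_2}h(\vec x)$ whenever the left side exceeds $L$, and likewise for the integrand under $\mu$; integrating against $\wdt\Pi_{T_k}$ and using the uniform bound on $\int\Psi\,d\wdt\Pi_{T_k}$ from Part 1 controls the tails of $K$ uniformly in $k$. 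Truncating $K$ at level $L$, passing to the limit, and letting $L\to\infty$ gives the convergence; lower semicontinuity of $\Psi$ yields $\int\Psi\,d\pi\le\wdt G$, so $\int_{\C_+}|K|\,d\pi<\infty$.

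\emph{Part 3.} For \eqref{e1-lm4.7}, Part 1 with $h\ge1$ gives $\limsup_{T}\frac1T\int_0^T\1_{\{|\vec X(t)|>L\}}\,dt\le\wdt G(1+c_{\min}L)^{-p_2}$, with $c_{\min}=\min_i c_i>0$, which falls below $\tfrac12$ once $L$ is large. The remaining step, and the main obstacle of Part 3, is to upgrade this instantaneous bound to one for the segment norm $\|\vec X_t\|=\sup_{u\in[t-r,t]}|\vec X(u)|$: I would use the local boundedness and H\"older regularity of Lemmas \ref{X-bounded} and \ref{Holder} to exclude thin excursions, so that a large segment norm forces $|\vec X|$ to be large on a time set of comparable measure; choosing $\hat K_1$ with a margin above $L$ then keeps the occupation of $\{\|\vec X_s\|\le\hat K_1\}$ above $\tfrac12$. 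For \eqref{e2-lm4.7} I argue on the fixed horizon $[0,n^*T_e]$: since $V_{\vec 0}(\Bphi)\le\hat K_1$, Lemma \ref{X-bounded} gives $\|\vec X_t\|\le R_2$ there with probability exceeding $1-\eps_1/2$, and on that event $f_i,g_i$ are bounded by Assumption \ref{asp1}(2). Writing $d\ln X_i=(f_i-\tfrac12\sigma_{ii}g_i^2)\,dt+g_i\,dE_i$, the drift is bounded below and, by Doob's maximal inequality, the martingale part stays above $-C$ with probability exceeding $1-\eps_1/2$; hence $\ln X_i(t)\ge\ln\eps_2-C'$ throughout, and $\beta:=\eps_2e^{-C'}$ satisfies the claim.
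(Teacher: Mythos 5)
Your Part 1 follows the same route as the paper: apply the functional It\^o formula to $(1+\vec c^\top\vec X(t))^{p_2}$, use \eqref{asp4-3a} for the drift, shift the delayed integral forward by Fubini so that it contributes $B_2$ against $-B_1$ plus a finite term depending only on the initial segment, bound the quadratic variation of the martingale part via \eqref{asp4-3b}, and invoke the strong law of large numbers for local martingales. The one substantive difference is that you explicitly isolate the second-order It\^o correction $\tfrac12 p_2^2Y^{p_2-2}\sum_{i,j}\sigma_{ij}c_ic_jX_iX_jg_ig_j$ and absorb it through \eqref{asp4-3b}; the paper's display \eqref{lm4.5-e2} passes from the It\^o formula to \eqref{asp4-3a} as if this nonnegative term were absent. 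Your treatment is the more careful one, but note that it closes only under the smallness condition $B_1-B_2>p_2\,n\sigma^*B_3$, which is not part of Assumption \ref{a.extn2} as stated and cannot be obtained by simply shrinking $p_2$ (the inequalities \eqref{asp4-3a}--\eqref{asp4-3b} are not monotone in $p_2$); you should state this as an additional standing requirement rather than as something you may "choose". For Part 2 the paper defers to \cite[Lemma 3.5]{NNY21}; your uniform-integrability argument via the ratio $(1+\vec c^\top\vec x)^{p-p_2}\to0$, applied termwise to the endpoint weight and to the integrand under $\mu$, is the standard way to carry that out and is sound. For Part 3 the paper cites \cite[Lemma 5.5]{HN18}, which is a delay-free statement; you correctly observe that the instantaneous occupation bound from Part 1 does not by itself control the segment norm $\|\vec X_s\|=\sup_{u\in[s-r,s]}|\vec X(u)|$ (a thin excursion contaminates a segment-time interval of length $r$), and that Lemmas \ref{X-bounded} and \ref{Holder} must be used to rule this out, essentially as in the proof of Lemma \ref{lm4.6}; this step is only sketched in your write-up and deserves the full $Y_k$/compact-set argument. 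Your derivation of \eqref{e2-lm4.7} by bounding $f_i,g_i$ on the high-probability event $\{\|\vec X_t\|\le R_2\}$ and controlling $\ln X_i$ with a stopped martingale and Doob's inequality is correct and matches the cited argument.
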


\begin{proof}
	Consider the first assertion.
	We obtain from the functional It\^o formula and \eqref{asp4-3a} that
	\begin{equation}\label{lm4.5-e2}
	\begin{aligned}
	&\dfrac{\left(1+\vec c^\top \vec X(t)\right)^{p_2}-\left(1+\vec c^\top \vec X(0)\right)^{p_2}}t\\
	&\qquad\leq
	B_0- \dfrac{B_1}t\int_0^t (1+\vec c^\top \vec X(s))^{p_2}h(\vec X(s))ds\\
	&\qquad\quad+\dfrac{B_2}t\int_0^t ds\int_{-r}^0(1+\vec c^\top \vec X(u+s))^{p_2}h(\vec X(u+s))\mu(du)
	+\dfrac{\bar L(t)}t\\
	&\qquad\leq B_0+\dfrac{B_2}t\int_0^r ds\int_{-r}^0(1+\vec c^\top \vec X(u+s))^{p_2}h(\vec X(u+s))\mu(du)\\
	&
	\qquad\quad- \dfrac{B_1-B_2}t\int_0^t (1+\vec c^\top \vec X(s))^{p_2}h(\vec X(s))ds
	+\dfrac{\bar L(t)}t,
	\end{aligned}
	\end{equation}
	where $\bar L(t)$ is the diffusion part of $\left(1+\vec c^\top \vec X(t)\right)^{p_2}$.
	Due to \eqref{asp4-3b}, we have the following estimate for the quadratic variation $\langle \bar L., \bar L.\rangle_t$ of $\bar L(t)$
	$$
	\begin{aligned}\langle \bar L., \bar L.\rangle_t\leq& B_3\int_0^t(1+\vec c^\top \vec X(s))^{p_2}h(\vec X(s))ds\\
	&+B_3\int_0^t ds\int_{-r}^0(1+\vec c^\top \vec X(u+s))^{p_2}h(\vec X(u+s))\mu(du)\\
	\leq&
	B_3\int_0^r ds\int_{-r}^0(1+\vec c^\top \vec X(u+s))^{p_2}h(\vec X(u+s))\mu(du)
	\\
	&+
	2B_3\int_0^t(1+\vec c^\top \vec X(s))^{p_2}h(\vec X(s))ds.
	\end{aligned}
	$$
	It follows from the strong law of large numbers for local martingales that
	\begin{equation}\label{lm4.5-e3}
	\limsup_{t\to\infty}\left( -\frac{B_1-B_2}{2t}\int_0^t (1+\vec c^\top \vec X(s))^{p_2}h(\vec X(s))ds
	+\dfrac{\bar L(t)}t\right)
	\leq 0 \text{ a.s.}
	\end{equation}
	Applying \eqref{lm4.5-e3} and noting
$\liminf_{t\to\infty}\dfrac{(1+\vec c^\top \vec X(t))^{p_2}-(1+\vec c^\top \vec X(0))^{p_2}}t\geq 0$ to \eqref{lm4.5-e2}, we have
	$$
	\limsup_{t\to\infty} \dfrac1t\int_0^t (1+\vec c^\top \vec X(s))^{p_2}h(\vec X(s))ds\leq \dfrac{2B_0}{B_1-B_2}\a.s
	$$
	Therefore, the first part of Lemma \ref{lm4.5} is proved.
	
	The proof of the second part is the same as
	that of
	\cite[Lemma 3.5]{NNY21}
	and is omitted.
	The
		proof of the third part can be found in \cite[Proof of Lemma 5.5]{HN18}.
\end{proof}

Compared to \cite[Lemma 5.7]{HN18}, it is much more difficult to prove the tightness and characterize the limit of the  normalized occupation measures in this setting because  $\C$ is an infinite dimensional space.

\begin{lm}\label{lm4.6}
Let Assumptions {\rm\ref{asp1}} and {\rm\ref{a.extn2}} be satisfied.
	For any initial condition $\Bphi\in\C_+$,
	the family $\left\{\wdt \Pi_t(\cdot), t\geq 1\right\}$ is tight in $\C_+$,
	and its weak$^*$-limit set, denoted by $\U=\U(\omega)$,
	is a family of invariant probability measures of $\vec X_t$ with probability 1.
\end{lm}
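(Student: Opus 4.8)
The plan is to establish the two assertions in turn: first the tightness of the family $\{\wdt\Pi_t\}$, then the invariance of every weak$^*$-limit point. The tightness is where the infinite-dimensionality of $\C$ creates the genuine difficulty, so I treat it as the heart of the argument.

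\emph{Tightness.} By Prokhorov's theorem it suffices to produce, for every $\eps>0$, a relatively compact $K_\eps\subset\C_+$ with $\limsup_{t\to\infty}\wdt\Pi_t(\C_+\setminus K_\eps)\leq\eps$ almost surely. Since $\C=\C([-r,0];\R^n)$ consists of continuous functions, the Arzel\`a--Ascoli theorem identifies the relatively compact sets as the uniformly bounded, equicontinuous ones; in particular each H\"older ball $B_R:=\{\Vphi\in\C_+:\norm{\Vphi}_{2\alpha}\leq R\}$ is relatively compact. Thus the goal reduces to showing that $\limsup_{t\to\infty}\frac1t\int_0^t \1_{\{\norm{\vec X_s}_{2\alpha}>R\}}\,ds\to 0$ as $R\to\infty$, a.s. I would control this in two stages. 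For \emph{boundedness of values}: because $h\geq 1$ and $(1+\vec c^\top\vec x)^{p_2}$ is coercive on $\R^n_+$ (as $\vec c^\top\vec x\geq(\min_i c_i)\abs{\vec x}$ there), the first assertion of Lemma \ref{lm4.5} gives $\limsup_{T}\frac1T\int_0^T(1+\vec c^\top\vec X(t))^{p_2}dt\leq\wdt G$, whence $\limsup_T\frac1T\int_0^T\1_{\{\abs{\vec X(t)}>N\}}dt\to0$ as $N\to\infty$; together with the sharper bound \eqref{e1-lm4.7} on the occupation of $\{\norm{\vec X_s}\leq\hat K_1\}$ this pins down the trajectory's values off an arbitrarily small time fraction. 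For \emph{equicontinuity}: at such good times the segment has bounded value and bounded $V_{\vec 0}$, so the regularity estimates of Lemma \ref{Holder} and Proposition \ref{theorem-2.4}, applied through the Markov property at an anchor time $\tau=t-2r$, yield $\PPphi\{\norm{\vec X_t}_{2\alpha}>R_4\mid\F_\tau\}\leq\eps$ on the event that $V_{\vec 0}(\vec X_\tau)$ is bounded. Averaging in $\tau$ and combining with the first stage produces concentration on the compact $B_{R_4}$.

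\emph{Invariance.} Let $\pi$ be the weak$^*$-limit of $\wdt\Pi_{T_k}$ along some $T_k\to\infty$, and let $(P_s)_{s\geq0}$ be the transition semigroup of the segment process $\vec X_t$, which maps $C_b(\C_+)$ into itself by the Feller property of Proposition \ref{theorem-2.4}. For fixed $s>0$ and $g\in C_b(\C_+)$ I would write
\[
\int_{\C_+}\!P_sg\,d\wdt\Pi_t-\int_{\C_+}\!g\,d\wdt\Pi_t=\frac1t\int_0^t\!\big(P_sg(\vec X_u)-g(\vec X_{u+s})\big)du+\frac1t\int_0^t\!\big(g(\vec X_{u+s})-g(\vec X_u)\big)du.
\]
The second integral telescopes into the boundary terms $\frac1t\big(\int_t^{t+s}-\int_0^s\big)g(\vec X_u)\,du$, which vanish since $g$ is bounded and $s$ is fixed. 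For the first, the integrand $Z_u:=P_sg(\vec X_u)-g(\vec X_{u+s})$ satisfies $\E[Z_u\mid\F_u]=0$ and $\E[Z_uZ_v]=0$ whenever $\abs{u-v}>s$, so $\E\big[(\frac1t\int_0^tZ_u\,du)^2\big]\leq C/t\to0$; passing to an almost surely convergent subsequence of $\{T_k\}$ makes it vanish too. Hence $\int P_sg\,d\wdt\Pi_{T_k}-\int g\,d\wdt\Pi_{T_k}\to0$ a.s., and since $P_sg,g\in C_b(\C_+)$ the weak$^*$ convergence gives $\int P_sg\,d\pi=\int g\,d\pi$, i.e.\ $\pi$ is invariant. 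Running this over a countable convergence-determining class of test functions, rational $s$, and the sequences realizing $\U(\omega)$ upgrades the conclusion to hold simultaneously with probability $1$.

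The main obstacle is the tightness step. In the non-delay setting a coercive Lyapunov function has compact sublevel sets, but here boundedness of the trajectory's values controls only a finite-dimensional slice and does not by itself keep the \emph{segment} in a compact subset of the infinite-dimensional $\C$. The delicate point is to convert the a.s.\ time-average bounds on values into an a.s.\ equicontinuity (H\"older) bound on segments via the conditional regularity estimates and the Markov property, while reconciling the pointwise and history-averaged quantities that appear in Assumption \ref{a.extn2} and drive Lemma \ref{lm4.5} with the supremum norm that governs compactness in $\C$.
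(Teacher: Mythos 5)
Your proposal is correct and follows essentially the same route as the paper's proof: both use the time-average bound of Lemma \ref{lm4.5} (transferred to the history integral via a Fubini manipulation) to show the segment process spends a $(1-\eps)$-fraction of time in a set where $V_{\vec 0}$ is bounded, then upgrade to occupation of a compact H\"older ball via Lemma \ref{Holder} and the Markov property, and finally establish invariance of the limit points by the standard semigroup/telescoping-plus-martingale argument over a countable class of test functions. The one step you compress into ``averaging in $\tau$'' is where the paper does its real work---a strong law of large numbers for the martingale part of $\sum_l \1_{\{\vec X_{lr}\in\K\}}$ over the grid times $kr$, followed by the observation that segments at two consecutive grid times lying in a H\"older ball force all intermediate segments into a compact set---but this is exactly the conversion your sketch calls for, so the skeleton is the same.
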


\begin{proof}
	For simplicity of notation, denote $\wdt h(\vec x)=(1+\vec c^\top \vec x)^{p_2}h(\vec x)$. We have
	\begin{equation}\label{e-l6-eq1}
	\begin{aligned}
	\int_0^T &\left[\int_{-r}^0\wdt h(\vec X_t(u))du\right]dt=\int_0^T \left[\int_{-r}^0\wdt h(\vec X(t+u))du\right]dt\\
	&\qquad\leq  \int_{-r}^0\left[\int_{-r}^T \wdt h(\vec X(t))dt\right]du
	\leq r\int_0^T \wdt h(\vec X(t))dt+r\int_{-r}^0\wdt h(\vec X(u))du.
	\end{aligned}
	\end{equation}
	Hence, by Lemma \ref{lm4.5} and \eqref{e-l6-eq1}, we deduce that
	\begin{equation}\label{e-l6-eq2}
	\lim_{T\to\infty}\int_0^T \left[\wdt h(\vec X(t))+\int_{-r}^0 \wdt h(\vec X_t(u))du\right]dt\leq\wdt G (1+r).
	\end{equation}
	Define
	$$\wdt C_R:=\left\{\Vphi\in\C_+^\circ: \wdt h(\vec x)+\int_{-r}^0\wdt h(\Vphi(u))du<R,\vec x=\Vphi(0)\right\}.$$
	A consequence of \eqref{e-l6-eq2} is that for any $\eps>0$, there is an $R>0$ such that
	\begin{equation}\label{e-l6-eq3}
	\lim_{T\to\infty}\frac 1T\int_0^T \1_{\{\vec X_t\in\wdt C_R\}}dt>1-\eps.
	\end{equation}
	It is easily seen
	that
	$$
	\sup_{\Vphi\in\wdt C_R}V_{\vec 0}^{p_0}(\Vphi)<\infty\;\forall R>0.
	$$
	By Lemma \ref{Holder}, there is a compact set $\K:=\{\Vphi:\|\Vphi\|_{2\alpha}<R_4\}$ of $\C^\alpha$, for some $\alpha>0$ and $R_4=R_4(\eps,R)$ such that
	\begin{equation}\label{e-l6-eq4}
	\PPphi\{\vec X_t\in\K\;\forall t\in[2r,3r]\}\geq 1-\eps, \quad\Bphi\in\wdt C_R.
	\end{equation}
	Let
	$Y_k:=\1_{\{\vec X_{kr}\in\K\}}$ for $k\in \N$,
	then $\sum_{l=1}^k Y_l=A_k+M_k$
	with $$A_k:=\sum_{l=1}^k\E\left[Y_l|\F_{(l-1)r}\right]\;;\;M_k:=Y_0+\sum_{l=1}^k\left(Y_l-\E\left[Y_l|\F_{(l-1)r}\right]\right).$$
	By strong law of large number for martingales, it is easily seen that
	\begin{equation}\label{e-l6-eq5}
	\lim_{k\to\infty}\frac{M_k}{k}=0\a.s
	\end{equation}
	To proceed, we estimate $\E \left[Y_l|\F_{(l-1)r}\right]$ for $l\geq 2$. In the event
	$\left\{\frac 1r\int_{(l-2)r}^{(l-1)r}\1_{\{\vec X_t\in \wdt C_R\}}dt>0\right\},$
	$\vec X_t\in \wdt C_R$ for some $t\in[(l-2)r,(l-1)r]$ and then, by the strong Markov property of $\vec X_t$ and \eqref{e-l6-eq4}, we have
	\begin{equation*}
	\E\left[Y_l\Big|\frac 1r\int_{(l-2)r}^{(l-1)r}\1_{\{\vec X_t\in \wdt C_R\}}dt>0\right]\geq 1-\eps.
	\end{equation*}
	Thus, owing to $\frac 1r\int_{(l-2)r}^{(l-1)r}\1_{\{\vec X_t\in\wdt C_R\}}dt\leq 1$, we can write
	\begin{equation*}
	\E\left[Y_l|\F_{(l-1)r}\right]\geq \frac {1-\eps}r\int_{(l-2)r}^{(l-1)r}\1_{\{\vec X_t\in \wdt C_R\}}dt\;\a.s
	\end{equation*}
	As a result,
	\begin{equation*}
	\frac{A_k}k\geq \frac {1-\eps}{kr}\int_0^{(k-1)r}\1_{\{\vec X_t\in\wdt C_R\}}dt\a.s,
	\end{equation*}
	which together with \eqref{e-l6-eq3} implies that
	\begin{equation}\label{e-l6-eq6}
	\liminf_{k\to\infty}\frac{A_k}k\geq (1-\eps)^2\a.s
	\end{equation}
	We deduce from \eqref{e-l6-eq5} and \eqref{e-l6-eq6} that
	\begin{equation}\label{e-l6-eq7}
	\liminf_{k\to\infty}\frac{\sum_{l=1}^kY_l}k\geq 1-2\eps\a.s
	\end{equation}
	We have the following
	estimate
	\begin{equation}\label{e-l6-eq8}
	1-\1_{\{\vec X_{lr}\in\K\text{ and }\vec X_{(l+1)r}\in\K\}}\leq (1-Y_l)+(1-Y_{l+1}).
	\end{equation}
	Due to \eqref{e-l6-eq7} and \eqref{e-l6-eq8}, we get
	\begin{equation}\label{e-l6-eq9}
	\lim_{k\to\infty}\frac{\sum_{l=1}^k \1_{\{\vec X_{lr}\in\K\text{ and }\vec X_{(l+1)r}\in\K\}}}{k}\geq 1-4\eps\a.s
	\end{equation}
	By Lemma \ref{Holder}, it is easy to show that there is a compact set $\wdt\K$ of $\C$ such that
	\begin{equation}\label{e-l6-eq10}
	\vec X_t\in\wdt \K,\forall t\in[lr,(l+1)r]\text{ if }\vec X_{lr}\in\K\text{ and }\vec X_{(l+1)r}\in\K.
	\end{equation}
	A consequence of \eqref{e-l6-eq9} and \eqref{e-l6-eq10} is that
	\begin{equation}\label{e-16-eq11}
	\lim_{T\to\infty}\frac 1T\int_0^T \1_{\{\vec X_t\in\wdt\K\}}\geq 1-4\eps\a.s
	\end{equation}
	As a result, the tightness of $\{\wdt\Pi_t(\cdot)\}$ is obtained.

	Moreover, from \eqref{e-16-eq11}, we obtain a family of compact sets $\{\K_\eps\subset\C_+,\eps\in(0,1)\}$ such that
	for any $\eps>0$, $\PPphi\left\{\U(\omega)\in \U_\K\right\}=1$,
	where $\U_\K$ is the set of probability measures on $\C$ satisfying  $ \pi(\K_\eps)>1-\eps \text{ for all } \eps\in(0,1).$
	From the definition of $\U_\K$, there exists a countable family $\{v_k\}$
of
bounded and continuous functions from $\C$ to $\R$ such that
	for any bounded and continuous function $v$ and  measure $\pi\in \U_\K$, we have
	\begin{equation}\label{e-16-eq12}
	\int v(\Vphi)\pi(d\Vphi)=\lim_{k_n\to\infty}\int v_{k_n}(\Vphi)\pi(d\Vphi).
	\end{equation}
Using the standard arguments in \cite[Proof of Theorem 4.2]{EHS15},
we can show that outside a null set,  any weak limit of $\{\wdt\Pi_t(\cdot)\}$, denoted by $\wdt\pi$, satisfies
	\begin{equation}\label{e-16-eq13}
	\int_{\C_+}\wdt\pi(d\Vphi)\int P(t,\Vphi,\boldsymbol{\psi})v_k(\boldsymbol{\psi})=\int v_k(\Vphi)\wdt\pi(d\Vphi).
	\end{equation}
	 From \eqref{e-16-eq12} and \eqref{e-16-eq13}, we have that outside a null set, for any  bounded and continuous function $v$,
	\begin{equation}
	\int_{\C_+}\wdt\pi(d\Vphi)\int P(t,\Vphi,\boldsymbol{\psi})v(\boldsymbol{\psi})=\int v(\Vphi)\wdt\pi(d\Vphi).
	\end{equation}
	The lemma is thus proved.
\end{proof}

\section{Extinction}\label{sec:ext}
Following the development in the last section, this section focuses on obtaining the criteria of extinction.
To start, we have the following
Lemma, whose proof is easily obtained by modifying the proof of \cite[Lemma 5.1]{HN18}.

\begin{lm}\label{lm4.1}
For any $\pi\in\M$ and $i\in I_\pi$, we have
$\lambda_i(\pi)=0.$
\end{lm}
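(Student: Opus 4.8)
The plan is to apply It\^o's formula to $\ln X_i$, read off $\lambda_i(\pi)$ as the almost sure exponential growth rate of $X_i$ under the stationary process, and then force that rate to vanish by a stationarity-in-distribution argument. Fix $\pi\in\M$ and $i\in I_\pi$, and let $\vec X_t$ be the stationary solution of \eqref{Kol-Eq} with $\vec X_0$ distributed according to $\pi$; write $\PP_\pi$ for the law of this stationary process. Since $\pi(\C_+^{I_\pi,\circ})=1$ and $\pi$ is invariant, $\vec X_t\sim\pi$ for every $t\ge 0$, so for $i\in I_\pi$ one has $X_i(t)>0$ for all $t$, $\PP_\pi$-a.s., and the one-dimensional law of $X_i(t)$ (hence of $\ln X_i(t)$) does not depend on $t$.

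First I would record, via It\^o's formula applied to the scalar semimartingale $X_i$ and using $d\langle E_i,E_i\rangle_t=\sigma_{ii}\,dt$,
\begin{equation*}
\ln X_i(t)-\ln X_i(0)=\int_0^t\Big(f_i(\vec X_s)-\tfrac12\sigma_{ii}g_i^2(\vec X_s)\Big)ds+\int_0^t g_i(\vec X_s)\,dE_i(s).
\end{equation*}
Dividing by $t$, the next step is to control the two terms as $t\to\infty$. Assumption \ref{asp-2} furnishes the $\pi$-integrability of $\abs{f_i}+g_i^2$ (the $\pi$-uniform integrability noted just after Assumption \ref{asp-2}), so $f_i-\tfrac12\sigma_{ii}g_i^2\in L^1(\pi)$ and $\lambda_i(\pi)$ is finite; Birkhoff's ergodic theorem for the ergodic measure $\pi$ then gives $\frac1t\int_0^t(f_i(\vec X_s)-\tfrac12\sigma_{ii}g_i^2(\vec X_s))\,ds\to\lambda_i(\pi)$, $\PP_\pi$-a.s. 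For the martingale $N(t):=\int_0^t g_i(\vec X_s)\,dE_i(s)$, the same integrability yields $\frac1t\langle N\rangle_t=\frac{\sigma_{ii}}t\int_0^t g_i^2(\vec X_s)\,ds\to\sigma_{ii}\int g_i^2\,d\pi<\infty$, so $\langle N\rangle_t$ grows at most linearly and the strong law of large numbers for local martingales gives $N(t)/t\to0$, $\PP_\pi$-a.s. Combining, $\tfrac{1}{t}\ln X_i(t)\to\lambda_i(\pi)$, $\PP_\pi$-a.s.

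Finally I would exploit stationarity to pin this rate to zero. Because $\ln X_i(t)$ is a.s.\ finite and identically distributed in $t$, one has, for each $\eps>0$, $\PP_\pi(\abs{\ln X_i(t)}>\eps t)=\PP_\pi(\abs{\ln X_i(0)}>\eps t)\to0$, i.e.\ $\tfrac{1}{t}\ln X_i(t)\to0$ in $\PP_\pi$-probability. The previous step gives $\tfrac{1}{t}\ln X_i(t)\to\lambda_i(\pi)$ $\PP_\pi$-a.s., hence in probability, and uniqueness of limits in probability forces $\lambda_i(\pi)=0$.

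The hard part is not the scalar computation, which is the standard growth-rate argument of \cite[Lemma 5.1]{HN18}; it is the justification that $f_i-\tfrac12\sigma_{ii}g_i^2$ is genuinely $\pi$-integrable, so that both the ergodic average and the quadratic-variation control are legitimate. This is exactly where Assumption \ref{asp-2} and the infinite-dimensional, functional nature of the coefficients (they depend on the whole segment $\vec X_s$, not merely on $X(s)$) enter, and it must be handled through the companion integrability estimates rather than treated as a routine bound.
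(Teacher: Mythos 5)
Your proposal is correct and follows essentially the same route as the paper, which simply defers to a modification of \cite[Lemma 5.1]{HN18}: It\^o's formula for $\ln X_i$, Birkhoff's ergodic theorem plus the strong law for local martingales to identify $\lambda_i(\pi)$ as the a.s.\ growth rate, and stationarity of the law of $\ln X_i(t)$ to force that rate to be zero. You also correctly flag that the only nontrivial adaptation to the functional setting is the $\pi$-integrability of $\abs{f_i}+g_i^2$ supplied by Assumption \ref{asp-2}, which is exactly the point the paper relies on.
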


The intuition behind Lemma \ref{lm4.1} is clear. If the process evolves in the interior of the support of an ergodic invariant measure $\mu$, it will eventually approach the ``stationary" state with probability measure $\mu$ and it cannot
grow or decay exponentially fast.

It is shown in \cite[Lemma 4]{SBA11}, by the min-max
principle, that condition \eqref{ae3.2} is equivalent to the existence of
$0<\hat \alpha_i<p_0, i\in I$
such that
$$\inf_{\nu\in\partial \M^I}\sum_{i\in I}\hat \alpha_i\lambda_i(\nu)>0.$$
Thus, there is an $\alpha_*>0$ sufficiently small such that
\begin{equation}\label{e3.2}
\begin{aligned}
\inf_{\nu\in\partial \M^I}\sum_{i\in I}\hat \alpha_i\lambda_i(\nu)-\alpha_*\max_{i\in I^c}\{\lambda_i(\nu)\}>0.
\end{aligned}
\end{equation}
In view of \eqref{e3.2}, \eqref{ae3.1}, and Lemma \ref{lm4.1}, there is a $\kappa_e>0$ such that for any $\nu\in\M^I$,
\begin{equation}\label{e3.3}
\sum_{i\in I}\hat \alpha_i\lambda_i(\nu)-\alpha_*\max_{i\in I^c}\left\{\lambda_i(\nu)\right\}>3\kappa_e.
\end{equation}
Now, denote
\begin{equation*}
\begin{aligned}
Q_{\vec 0}(\Vphi)=&A_2 h(\vec x)\int_{-r}^0 e^{-\gamma s}\mu(ds)-A_2\int_{-r}^0h\big(\Vphi(s)\big)\mu(ds)
\\&-A_2\gamma \int_{-r}^0\mu(ds)\int_s^0e^{\gamma(u-s)}h\big(\Vphi(u)\big)du
\\&+\dfrac{\sum_{i=1}^n c_ix_if_i(\Vphi)}{1+\vec c^\top\vec x}-\dfrac 12\sum_{i,j=1}^n  \dfrac{c_ic_j\sigma_{ij}x_ix_jg_i(\Vphi)g_j(\Vphi)}{\Big(1+\vec c^\top\vec x\Big)^2},
\end{aligned}
\end{equation*}
and let $n^*$ be a sufficient large integer such that
\begin{equation}\label{e:n*}
\gamma_0 (n^*-1)- A_0>0.
\end{equation}

\begin{lm}\label{lm4.2}
Suppose that Assumptions {\rm\ref{asp1}}, {\rm\ref{asp-2}}, and {\rm\ref{a.extn}} hold. Let $I\subset\{1,\dots, n\}$ satisfy Assumption {\rm\ref{a.extn}}.
Then there are $T_e\geq 0$ and $\delta_e>0$ such that for any $T\in[T_e,n^*T_e]$, $\Bphi\in\C^{\circ}_+\cap \C_{V,M}, \phi_i(0)<\delta_e, \forall i\in I^c$, we have
\begin{equation}\label{e3.4}
\begin{aligned}
\dfrac1T\int_0^T\Ephi Q_{\vec 0}(\vec X_t)dt&-\sum_{i\in I}\hat \alpha_i\dfrac1T\int_0^T\Ephi\left(f_i(\vec X_t)-\dfrac{\sigma_{ii}g^2_i(\vec X(t))}{2} \right)dt\\
&+\alpha_*\max_{i\in I^c}\left\{\dfrac1T\int_0^T\Ephi\left(f_i(\vec X_t)-\dfrac{\sigma_{ii}g^2_i(\vec X_t)}{2} \right)dt\right\}
\leq -\kappa_e,
\end{aligned}
\end{equation}
\end{lm}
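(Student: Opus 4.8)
The plan is to interpret $Q_{\vec 0}$ as the generator acting on $\ln V_{\vec 0}$, and then to run a contradiction argument driven by the random occupation measures. First I would check that $Q_{\vec 0}(\Vphi)=\Lom\ln V_{\vec 0}(\Vphi)$. Writing $\ln V_{\vec 0}(\Vphi)=\ln(1+\vec c^\top\vec x)+A_2\int_{-r}^0\mu(ds)\int_s^0 e^{\gamma(u-s)}h(\Vphi(u))du$, the vertical (Dupire) derivative of the path-integral term vanishes because $\Vphi(0)$ enters only on a $du$-null set, so that term contributes neither a diffusion nor a second-order correction; its horizontal derivative is found by shifting the window (substitute $v=t+u$ and differentiate by Leibniz) and reproduces exactly the first three terms of $Q_{\vec 0}$, while the Itô formula applied to $\ln(1+\vec c^\top\vec x)$ supplies the last two. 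Consequently, by the functional Itô formula and standard localization using the moment bounds of Theorem \ref{existence}, for every invariant probability measure $\nu$ of $\vec X_t$ with $\ln V_{\vec 0},Q_{\vec 0}\in L^1(\nu)$ one has $\int_{\C_+}Q_{\vec 0}\,d\nu=0$; the integrability is provided by Assumption \ref{asp-2} and Lemma \ref{lm4.5}.

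Next I would rewrite the left-hand side of \eqref{e3.4} through the averaged occupation measure $\Pi_T(\cdot):=\frac1T\int_0^T\PPphi\{\vec X_t\in\cdot\}\,dt$, namely as $\int Q_{\vec 0}\,d\Pi_T-\sum_{i\in I}\hat\alpha_i\int(f_i-\tfrac{\sigma_{ii}g_i^2}{2})\,d\Pi_T+\alpha_*\max_{i\in I^c}\int(f_i-\tfrac{\sigma_{ii}g_i^2}{2})\,d\Pi_T$, and argue by contradiction. If the conclusion fails, then taking $T_e=k$, $\delta_e=1/k$ yields, for each $k$, some $T_k\in[k,n^*k]$ and initial data $\Bphi_k\in\C^\circ_+\cap\C_{V,M}$ with $\max_{i\in I^c}\phi_{k,i}(0)<1/k$ along which this expression stays $>-\kappa_e$. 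Using the bound \eqref{EV-bounded} for tightness (as in Lemma \ref{lm4.6}) and the Feller property of Proposition \ref{theorem-2.4}, I would pass to a subsequence so that $\Pi_k$ converges weakly to an invariant probability measure $\nu$; the $\pi$-uniform integrability of $\sum_i(|f_i|+g_i^2)$ from Assumption \ref{asp-2} and Lemma \ref{lm4.5} lets me pass to the limit in each of the unbounded integrands $Q_{\vec 0}$ and $f_i-\tfrac{\sigma_{ii}g_i^2}{2}$, while the maximum of finitely many convergent integrals converges to the maximum of the limits.

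The crux is to show that $\nu$ is concentrated on the invariant face $\C_+^I=\{\Vphi:\norm{\varphi_i}=0,\ i\in I^c\}$. Since $\max_{i\in I^c}\phi_{k,i}(0)\to0$ and each face $\{X_i\equiv0\}$ is invariant, the external coordinates start arbitrarily close to $\C_+^I$; Assumption \ref{a.extn} in the form \eqref{ae3.1} makes that face attracting in the $I^c$-directions, so the mass $\Pi_k$ places away from $\C_+^I$ should be asymptotically negligible. I would make this quantitative with a Lyapunov function such as $U(\Vphi)=\sum_{i\in I^c}\varphi_i(0)^{p}$ for small $p>0$, whose expected drift near the face is governed by $p(\lambda_i+O(p))<0$; this forces $\int U\,d\Pi_k\to0$ and hence $\int U\,d\nu=0$, i.e.\ $\nu(\C_+\setminus\C_+^I)=0$. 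This is where the infinite-dimensional setting bites: even when $\vec X(t)$ is near the boundary its history segment need not be, so controlling excursions off the face requires the segment-level estimates of Lemmas \ref{X-bounded}--\ref{lm4.5} rather than a pointwise argument, and I expect this to be the main obstacle.

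Finally, with $\nu$ invariant and supported on $\C_+^I\subset\partial\C_+$, the integrals against $\nu$ reduce to Lyapunov exponents, $\int(f_i-\tfrac{\sigma_{ii}g_i^2}{2})\,d\nu=\lambda_i(\nu)$, while $\int Q_{\vec 0}\,d\nu=0$ from the first step. Decomposing $\nu$ into ergodic components (each in $\M^I$) and using that \eqref{e3.3} survives such mixtures — the linear terms average and the $\max$ term only helps by concavity — the limit of the left-hand side equals $-\big(\sum_{i\in I}\hat\alpha_i\lambda_i(\nu)-\alpha_*\max_{i\in I^c}\lambda_i(\nu)\big)<-3\kappa_e$. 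This contradicts the standing bound $>-\kappa_e$, completing the proof; the range $T\in[T_e,n^*T_e]$ serves only to guarantee $T_k\to\infty$ in the contradiction.
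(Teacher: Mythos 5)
Your overall skeleton --- pass to averaged occupation measures, extract a weak limit $\nu$, combine $\int Q_{\vec 0}\,d\nu=0$ with \eqref{e3.3} and the ergodic decomposition to reach a contradiction --- is the same engine the paper runs, and your remarks on uniform integrability (Assumption \ref{asp-2}, Lemma \ref{lm4.5}) and on the favorable direction of the convexity of the $\max$ under ergodic decomposition are correct. The gap is exactly where you flag it: the claim that the weak limit $\nu$ is concentrated on the face $\C_+^I$. Your proposed fix --- a Lyapunov function $U(\Vphi)=\sum_{i\in I^c}\varphi_i(0)^{p}$ with ``expected drift near the face governed by $p(\lambda_i+O(p))<0$'' --- does not work as stated: the drift of $X_i^p$ is $pX_i^p\big(f_i-\tfrac{1-p}{2}\sigma_{ii}g_i^2\big)$, which has no pointwise sign; the negativity in \eqref{ae3.1} is only an \emph{average} against ergodic measures supported on the face. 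Converting that averaged negativity into a supermartingale-type decay for trajectories started near (but off) the face is precisely the content of Lemma \ref{lm4.3} and Proposition \ref{prop4.1}, both of which sit downstream of the present lemma, so this route is circular. It is also not clear the concentration claim is even true over horizons $T_k\in[k,n^*k]$ without such control: nothing proved so far prevents the process from spending a non-negligible fraction of its time away from $\C_+^I$.

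The paper sidesteps the issue by splitting the argument differently. Step one runs the compactness/contradiction argument for initial data in a compact set $\K$, where weak limits of the occupation measures are invariant measures to which \eqref{e3.3} applies directly, yielding the inequality with the stronger constant $-2\kappa_e$ and a uniform $T_\K$; no concentration estimate for $\nu$ is needed there. Step two extends to initial data $\Bphi\in\C^{\circ}_+\cap\C_{V,M}$ with $\phi_i(0)<\delta_e$ for $i\in I^c$ by continuous dependence on the initial condition over the \emph{fixed finite} window $[T_e,n^*T_e]$, via Proposition \ref{theorem-2.4}(ii) supplemented by Lemma \ref{Holder} (needed because bounded sets of $\C$ are not pre-compact, so the Feller property alone does not supply the required uniformity); this is where $\delta_e$ is chosen and why the constant degrades from $-2\kappa_e$ to $-\kappa_e$. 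Note in particular that in your write-up the restriction $T\le n^*T_e$ ``serves only to guarantee $T_k\to\infty$,'' whereas in the actual proof the upper bound on $T$ is essential: it is what makes the finite-time continuity step available. You should restructure your argument along these lines rather than attempting to prove concentration of $\nu$ on $\C_+^I$.
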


\begin{proof}
The proof is similar to \cite[Lemma 4.2]{NNY21}.
First, using \eqref{e3.3}, we can prove that for any compact set $\K$, there exists a $T_\K>0$ such that for any $T>T_\K, \Bphi\in\C^{\circ}_+\cap \K$, we have
\begin{equation}
\begin{aligned}
\dfrac1T\int_0^T\Ephi Q_{\vec 0}(\vec X_t)dt&-\sum_{i\in I}\hat \alpha_i\dfrac1T\int_0^T\Ephi\left(f_i(\vec X_t)-\dfrac{\sigma_{ii}g^2_i(\vec X_t)}{2} \right)dt\\
&+\alpha_*\max_{i\in I^c}\left\{\dfrac1T\int_0^T\Ephi\left(f_i(\vec X_t)-\dfrac{\sigma_{ii}g^2_i(\vec X_t)}{2} \right)dt\right\}
\leq -2\kappa_e.
\end{aligned}
\end{equation}
Then, although the Feller property of $(\vec X_t)$ is not directly applied here because  a bounded set in an infinite dimensional space is not necessarily pre-compact,
we can overcome the difficulty by using Lemma \ref{Holder} and Proposition \ref{theorem-2.4}(ii). The detailed calculations are analogous to that of
\cite[Lemma 4.2]{NNY21}
and are omitted.
\end{proof}

\begin{lm}\label{lm4.3}
Suppose that Assumptions {\rm\ref{asp1}}, {\rm\ref{asp-2}}, and {\rm\ref{a.extn}} hold, and let $\hat \alpha_i,\alpha_*, \delta_e, n^*,T_e$
as in Lemma \ref{lm4.2}.
Then there is a $\theta\in(0,p_0)$ such that for any $T\in[T_e,n^*T_e]$ and $\Bphi\in\C^{\circ}_+\cap \C_{V,M}$ satisfying  $\phi_i(0)<\delta_e,$ $\forall i\in I^c$ one has
$$\Ephi \hat U_\theta(\vec X_T)\leq \exp\left(-\frac{1}{4}\theta \kappa_eT\right) \hat U_\theta(\Bphi),$$
where
$$
\begin{aligned}
\hat U_\theta(\Vphi):=&\sum_{i\in I^c}V_{\Vrho^{i,e}}^{\theta}(\Vphi)\\
=&\sum_{i\in I^c}\left[(1+\vec c^\top\vec x)\dfrac{x_i^{\alpha_*}}{\prod_{j\in I} x_j^{\hat \alpha_j}}\exp\Big\{A_2\int_{-r}^0\mu(ds)\int_s^0 e^{\gamma(u-s)}h\big(\Vphi(u)\big)du\Big\}\right]^\theta,
\end{aligned}
$$
and
$\Vrho^{i,e}=(\rho^{i,e}_1,\dots,\rho^{i,e}_n)$  and
$$\rho^{i,e}_j=\alpha_*\text{ if }j=i, \;\;\rho^{i,e}_j=-\hat \alpha_j\text{ if }j\neq i, j\in I\text{ and otherwise, }\rho^{i,e}_j=0.$$
\end{lm}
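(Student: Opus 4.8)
The plan is to use linearity to reduce to a single summand, compute the action of the generator on each $V_{\Vrho^{i,e}}^\theta$, and then convert the \emph{averaged} exponential decay supplied by Lemma \ref{lm4.2} into decay of the $\theta$-th moment by a perturbation argument in small $\theta$. Since $\hat U_\theta=\sum_{i\in I^c}V_{\Vrho^{i,e}}^\theta$, it suffices to prove $\Ephi V_{\Vrho^{i,e}}^\theta(\vec X_T)\le e^{-\frac14\theta\kappa_e T}V_{\Vrho^{i,e}}^\theta(\Bphi)$ for each fixed $i\in I^c$ and then sum. The exponents $\Vrho^{i,e}$ satisfy the smallness condition \eqref{cd-p0} provided $\alpha_*$ and the $\hat\alpha_j$ are small (as arranged in \eqref{e3.2}--\eqref{e3.3}, together with $\hat\alpha_i<p_0$), so Lemma \ref{LV} applies to $V_{\Vrho^{i,e}}$.

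Writing $\ln V_{\Vrho^{i,e}}=\ln V_{\vec 0}+\sum_j\rho^{i,e}_j\ln x_j$ and applying the functional It\^o formula, the drift of $\ln V_{\Vrho^{i,e}}(\vec X_t)$ is exactly
$$D_i(\Vphi):=Q_{\vec 0}(\Vphi)+\alpha_*\Big(f_i(\Vphi)-\tfrac{\sigma_{ii}g_i^2(\Vphi)}2\Big)-\sum_{j\in I}\hat\alpha_j\Big(f_j(\Vphi)-\tfrac{\sigma_{jj}g_j^2(\Vphi)}2\Big),$$
because $Q_{\vec 0}$ is precisely the drift of $\ln V_{\vec 0}$ and $i\notin I$. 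Hence $\Lom V_{\Vrho^{i,e}}^\theta=\theta V_{\Vrho^{i,e}}^\theta\big(D_i+\tfrac\theta2 q_i\big)$, where $q_i\ge0$ is the quadratic-variation rate of $\ln V_{\Vrho^{i,e}}$, bounded by $C\sum_k g_k^2$. Two facts about $D_i$ drive the argument: (i) \emph{pointwise dissipativity} from Lemma \ref{LV}, namely $D_i+\tfrac\theta2 q_i\le A_0\1_{\{|\vec x|<M\}}-\gamma_0-Ah(\vec x)-\tfrac{\gamma_b}2\sum_k(|f_k|+g_k^2)$ (using $\theta\le p_0$ and $q_i\ge0$); and (ii) \emph{average decay} from Lemma \ref{lm4.2}: since $\alpha_*(f_i-\tfrac{\sigma_{ii}g_i^2}2)\le\alpha_*\max_{i'\in I^c}\{\cdots\}$, the specific-$i$ form of \eqref{e3.4} yields $\frac1T\int_0^T\Ephi D_i(\vec X_t)\,dt\le-\kappa_e$ for every admissible $\Bphi$.

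Now factor out the deterministic $V_{\Vrho^{i,e}}^\theta(\Bphi)$ and set $\Delta:=\ln V_{\Vrho^{i,e}}(\vec X_T)-\ln V_{\Vrho^{i,e}}(\Bphi)=\int_0^T D_i(\vec X_t)\,dt+M_i(T)$, with $M_i$ the martingale part, so the goal becomes $\Ephi e^{\theta\Delta}\le e^{-\frac14\theta\kappa_e T}$. The dissipativity (i), together with Assumption \ref{asp-2} and the moment bound \eqref{EV-bounded-ne}, makes $M_i$ a true martingale (so $\Ephi\Delta=\int_0^T\Ephi D_i\,dt\le-\kappa_e T$ by (ii)) and supplies the uniform integrability needed for the remainder. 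From $e^{x}\le1+x+\tfrac{x^2}2e^{x^+}$ one gets $\Ephi e^{\theta\Delta}\le1-\theta\kappa_e T+\tfrac{\theta^2}2\Ephi\big[\Delta^2 e^{\theta\Delta^+}\big]$, and the pointwise bound $\int_0^T D_i\le A_0T-\tfrac{\gamma_b}2\int_0^T\sum_k(|f_k|+g_k^2)\,dt$ suppresses exactly those paths along which $\Delta$ is large in magnitude, forcing $\Ephi[\Delta^2 e^{\theta\Delta^+}]$ to be bounded uniformly over admissible $\Bphi$ and over $T\in[T_e,n^*T_e]$. Because $T\ge T_e>0$ is bounded below and the range of $T$ is bounded, one fixes $\theta\in(0,p_0)$ small enough that $\tfrac{\theta^2}2\Ephi[\Delta^2 e^{\theta\Delta^+}]\le\tfrac34\theta\kappa_e T$; then $\Ephi e^{\theta\Delta}\le1-\tfrac14\theta\kappa_e T\le e^{-\frac14\theta\kappa_e T}$, which is the claim.

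The hard part is the last step: upgrading the averaged, first-order decay of Lemma \ref{lm4.2} to genuine decay of the $\theta$-th moment \emph{uniformly} over the non-compact family of admissible initial data (which permits the persistent coordinates $x_j$, $j\in I$, to be arbitrarily small, so that $V_{\Vrho^{i,e}}(\Bphi)$ is unbounded). The remainder $\Ephi[\Delta^2 e^{\theta\Delta^+}]$ involves the unbounded coefficients $f_k,g_k^2$ through both $D_i$ and $\langle M_i\rangle$, and is controlled only because the strong term $-\tfrac{\gamma_b}2\sum_k(|f_k|+g_k^2)$ in Lemma \ref{LV} renders the drift very negative exactly where those coefficients are large; making this estimate uniform in the infinite-dimensional setting is where the regularity and moment estimates of Section \ref{sec:key} (Proposition \ref{theorem-2.4}, Lemma \ref{Holder}) and the $\pi$-uniform integrability of Assumption \ref{asp-2} are used, while the factor $\tfrac14$ against the $\kappa_e$ of Lemma \ref{lm4.2} is the slack that absorbs the $O(\theta^2)$ error for small $\theta$. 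This follows the template of the corresponding arguments in \cite{NNY21} and \cite{HN18}.
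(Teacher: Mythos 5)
Your proposal is correct and follows essentially the same route as the paper: the paper's own proof simply reduces to the single summands $V_{\Vrho^{i,e}}^{\theta}$ and invokes the argument of \cite[Proposition 4.1]{NNY21} (itself patterned on \cite{HN18}), which is exactly the perturbation-in-small-$\theta$ scheme you spell out — identify the drift $D_i$ of $\ln V_{\Vrho^{i,e}}$, use Lemma \ref{lm4.2} for the averaged bound $\frac1T\int_0^T\Ephi D_i\,dt\le-\kappa_e$, and absorb the $O(\theta^2)$ remainder (controlled uniformly via the dissipative term $-\frac{\gamma_b}{2}\sum_k(|f_k|+g_k^2)$ in Lemma \ref{LV}) into the slack between $\kappa_e$ and $\frac14\kappa_e$. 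Your write-up is in fact more detailed than the paper's, and the steps you flag as delicate are precisely the ones deferred to the companion paper.
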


\begin{proof}
The argument to prove this Proposition is similar to
 that of
 \cite[Proposition 4.1]{NNY21}.
 For each $i\in I^c$, by making use of Lemma \ref{lm4.2}, there exists a $\theta>0$ such that for $T\in[T_e,n^*T_e]$, $\Bphi\in \C_+^\circ\cap\C_{V,M}$ with $\phi_i(0)<\delta_e$, we have
$$\Ephi V_{\Vrho^{i,e}}^\theta(\vec X_T)\leq \exp\left(-\frac{1}{4}\theta \kappa_eT\right) V_{\Vrho^{i,e}}^\theta(\Bphi).$$
Therefore, the Proposition follows from the definition of $\hat U_\theta$.
\end{proof}

\begin{prop}\label{prop4.1}
Under Assumptions {\rm\ref{asp1}}, {\rm\ref{asp-2}}, and {\rm\ref{a.extn}}.
For any $\eps>0$, there exists $\delta=\delta(\eps)>0$ such that
\begin{equation}
\PPphi\left\{\lim_{t\to\infty}\hat U_\theta(\vec X_t)=0\right\}\geq 1-\eps,\text{ for all }\Bphi \text{ satisfying }\hat U_\theta(\Bphi)<\delta.
\end{equation}
\end{prop}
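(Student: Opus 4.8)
The plan is to promote the one-step geometric contraction of Lemma \ref{lm4.3} into almost-sure decay of $\hat U_\theta(\vec X_t)$ on an event of probability at least $1-\eps$, by a stopping-time and supermartingale argument that parallels the corresponding result in the companion paper \cite{NNY21}. The difficulty is that Lemma \ref{lm4.3} only applies from base points lying in $\C^{\circ}_+\cap\C_{V,M}$ with $\phi_i(0)<\delta_e$ for all $i\in I^c$, so I must first arrange a skeleton of sampling times at which these hypotheses hold. Concretely, I would construct $\F$-stopping times $0=\tau_0<\tau_1<\tau_2<\cdots$ with increments $\tau_{k+1}-\tau_k\in[T_e,n^*T_e]$, chosen inside each window so that $\vec X_{\tau_k}\in\C_{V,M}$. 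The existence of such a return time within a window of length $n^*T_e$, with probability close to $1$, is exactly what the Foster--Lyapunov estimate \eqref{EV-bounded} for $V_{\vec 0}$ (Theorem \ref{existence}) together with the choice \eqref{e:n*} of $n^*$ is designed to provide; this is also the reason Lemma \ref{lm4.3} is formulated with the horizon $T$ ranging over $[T_e,n^*T_e]$ rather than being fixed.

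On the event that at step $k$ the base point $\vec X_{\tau_k}$ lies in $\C^{\circ}_+\cap\C_{V,M}$ and satisfies $X_i(\tau_k)<\delta_e$ for every $i\in I^c$, Lemma \ref{lm4.3} applies with horizon $\tau_{k+1}-\tau_k\in[T_e,n^*T_e]$ and yields
\[
\Ephi\big[\hat U_\theta(\vec X_{\tau_{k+1}})\mid\F_{\tau_k}\big]\leq e^{-\frac14\theta\kappa_e(\tau_{k+1}-\tau_k)}\hat U_\theta(\vec X_{\tau_k})\leq e^{-\frac14\theta\kappa_e T_e}\hat U_\theta(\vec X_{\tau_k}).
\]
Let $\sigma$ be the first index $k$ at which these hypotheses fail, i.e.\ at which either $\vec X_{\tau_k}\notin\C_{V,M}$ or $X_i(\tau_k)\geq\delta_e$ for some $i\in I^c$. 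Then $N_k:=\hat U_\theta(\vec X_{\tau_{k\wedge\sigma}})$ is a nonnegative supermartingale that is contracted by the factor $e^{-\frac14\theta\kappa_e T_e}$ at every step before $\sigma$; in particular $\Ephi N_k\leq e^{-\frac14\theta\kappa_e T_e\,k}\,\hat U_\theta(\Bphi)$ as long as no failure has occurred.

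It remains to bound $\PPphi\{\sigma<\infty\}$ by $\eps$ for $\hat U_\theta(\Bphi)<\delta$ small. I would split the failure into two types. If $\sigma$ is triggered by $X_i(\tau_\sigma)\geq\delta_e$ for some $i\in I^c$ while still in $\C_{V,M}$, then the constraint $|\vec x|\leq M$ bounds the factors $(1+\vec c^\top\vec x)$, $\prod_{j\in I}x_j^{\hat\alpha_j}$ and the history exponential appearing in $\hat U_\theta$, so the corresponding summand is bounded below by a positive constant $\eta=\eta(\delta_e,M)$; hence $N_\sigma\geq\eta$, and the maximal inequality for the nonnegative supermartingale $N_k$ gives $\PPphi\{\text{type-(i) failure}\}\leq\Ephi N_0/\eta=\hat U_\theta(\Bphi)/\eta\leq\delta/\eta$. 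The second type of failure, leaving $\C_{V,M}$, is controlled by the recurrence furnished by \eqref{EV-bounded}: enlarging the window (this is where the full range up to $n^*T_e$ and the choice \eqref{e:n*} enter) makes the per-window probability of not returning to $\C_{V,M}$ arbitrarily small, and aggregating these contributions bounds the total type-(ii) probability by $\eps/2$. Choosing $\delta$ so that $\delta/\eta<\eps/2$ then yields $\PPphi\{\sigma=\infty\}\geq1-\eps$. On $\{\sigma=\infty\}$ the contraction holds at every step, so $\hat U_\theta(\vec X_{\tau_k})\to0$ both in expectation and, by the supermartingale convergence theorem together with Borel--Cantelli, almost surely; finally the H\"older regularity of the segments (Lemma \ref{Holder}) and the continuity estimates of Proposition \ref{theorem-2.4} transfer the decay along the skeleton $(\tau_k)$ to the full continuous-time limit $\hat U_\theta(\vec X_t)\to0$.

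The step I expect to be the main obstacle is the control of the second failure mode, i.e.\ making the recurrence to $\C_{V,M}$ uniform over the infinitely many windows. The function $\hat U_\theta$ carries negative exponents on the $I$-coordinates and is therefore not covered by the clean drift bound \eqref{EV-bounded}, so one must genuinely interlace two Lyapunov mechanisms --- $\hat U_\theta$ driving the $I^c$-coordinates to the boundary and $V_{\vec 0}$ forcing return to $\C_{V,M}$ --- while keeping the escape probabilities summable. Because bounded sets in the infinite-dimensional state space $\C$ are not precompact, this recurrence cannot invoke the Feller property directly and must instead route through the H\"older estimate of Lemma \ref{Holder} and Proposition \ref{theorem-2.4}(ii), exactly as in the workaround already used in the proof of Lemma \ref{lm4.2}.
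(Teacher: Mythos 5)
Your skeleton/stopping-time architecture and the treatment of your type-(i) failure (the coordinate $X_i(\tau_k)$ for $i\in I^c$ rising above $\delta_e$, handled by the maximal inequality for the nonnegative supermartingale) match the paper's use of $\beta_m:=\inf\{k:Z(k)\geq m\}$ and the bound $\Ephi \1_{\{\beta_m>k\}}Z(k)\leq q_1^kZ(0)$. But your type-(ii) failure control is a genuine gap, and you have correctly sensed where the problem is. With windows constrained to $[T_e,n^*T_e]$, the per-window probability that the chain fails to be in $\C_{V,M}$ at the next sampling time does \emph{not} tend to zero along the sequence of windows; the Foster--Lyapunov bound \eqref{EV-bounded} only gives a time-uniform bound of the form $\PPphi\{\vec X_t\notin\C_{V,M}\}\leq c$ for a fixed constant $c>0$, so the sum of the escape probabilities over infinitely many windows diverges and cannot be made $\leq\eps/2$. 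There is no way to ``interlace'' the two Lyapunov mechanisms at the level of sampling times the way you propose.

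The paper avoids this entirely: it never requires the process to return to $\C_{V,M}$. The point of \eqref{et3.1} is that \emph{outside} $\C_{V,M}$ the function $\hat U_\theta$ itself has strictly negative pointwise drift, $\Lom\hat U_\theta\leq-\theta\gamma_0\hat U_\theta$, so excursions out of $\C_{V,M}$ accelerate the decay rather than threaten it. Combining this with Lemma \ref{lm4.3} inside $\C_{V,M}$ (via the Dynkin formula over a window of length $n^*T_e$, with $n^*$ chosen so that $\gamma_0(n^*-1)-A_0>0$ makes the decaying portion dominate the at-most-one subwindow of growth at rate $A_0\theta$) yields the single unconditional contraction $\Ephi Z(1)\leq q_1Z(0)$ for the truncated quantity $Z(k)=d(\delta_e)\wedge\hat U_\theta(\vec X_{kn^*T_e})$ under the sole hypothesis $\hat U_\theta(\Bphi)\leq d(\delta_e)$; the constant $C_0$ in the definition of $d(\delta_e)$ guarantees that smallness of $\hat U_\theta$ automatically forces $x_i<\delta_e$ for $i\in I^c$, so membership in $\C_{V,M}$ never needs to be tracked as a separate event. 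A secondary (smaller) issue: your transfer from the skeleton to continuous time via H\"older regularity of paths is insufficient, since $\hat U_\theta$ contains the negative powers $x_j^{-\hat\alpha_j}$, $j\in I$, which can blow up between sampling times even along equicontinuous paths; the paper instead controls $\sup_{s\in[kn^*T_e,(k+1)n^*T_e]}\hat U_\theta(\vec X_s)$ through the exit time $\zeta_u$ and the supermartingale $e^{-A_0\theta t}\hat U_\theta(\vec X_t)$ coming from $\Lom\hat U_\theta\leq A_0\theta\hat U_\theta$, followed by Borel--Cantelli.
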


\begin{proof}
Let
$$
\begin{aligned}
C_0&:=\sup_{ \Vphi\in\C^{\circ}_+}\left\{\dfrac{\prod_{i\in I} x_i^{\hat \alpha_i}}{(1+\vec c^\top\vec x)\exp\Big\{A_2\int_{-r}^0\mu(ds)\int_s^0 e^{\gamma(u-s)}h\big(\Vphi(u)\big)du\Big\}}: \vec x=\Vphi(0)\right\}\\
&<\infty,
\end{aligned}
$$
and
\begin{equation}\label{de}
d(\delta_e):=\dfrac{(\delta_e)^{\theta\alpha_*}}{C_0^\theta},
\end{equation}
where $\delta_e$ is as in Lemma \ref{lm4.2}.
By \eqref{est-LV}, we have
\begin{equation}\label{et3.1}
\Lom \hat U_\theta(\Vphi)\leq -\theta\gamma_0\hat U_\theta(\Vphi) \text{ if } \Bphi\in\C^\circ_+,\Vphi\notin\C_{V,M}.
\end{equation}
Because of \eqref{et3.1}, \eqref{EV-bounded-ne}, and Proposition \ref{prop4.1}, by the same  procedure as
\cite[Theorem 4.1]{NNY21}, we obtain that
\begin{equation}\label{e-p1-eq1}
\text{ if }\hat U_\theta(\Bphi) \leq d(\delta_\eps)\text{ then } \Ephi Z(1)\leq q_1 Z(0), \text { for some } q_1\in(0,1),
\end{equation}
where
$Z(k):=d(\delta_e)\wedge \hat U_\theta(\vec X_{kn^*T_e}), k\in\N$. The reader can also see \cite[Proof of Theorem 5.1]{HN18} for detailed calculations of this argument.

For each $m<d(\delta_e)$, define the stopping time
$$\beta_{m}:=\inf\{k\in\N:Z(k)\geq m\}.$$
By \eqref{e-p1-eq1},
\begin{equation}\label{e-p1-eq2}
\Ephi \1_{\{\beta_m> k\}}Z(k)\leq q_1^kZ(0).
\end{equation}
In view of \eqref{e-p1-eq2}, we have
\begin{equation*}
\PPphi\left\{\beta_m>k\right\}\geq 1- \eps,\forall k\in\N\text{ if }\hat U_\theta(\Bphi)<m\eps.
\end{equation*}
Hence,  letting $k\to\infty$
leads to
\begin{equation}\label{e-p1-eq7}
\PPphi\left\{\beta_m=\infty\right\}\geq 1-\eps\text{ if }\hat U_\theta(\Bphi)<m\eps.
\end{equation}
On the other hand, using \eqref{est-LV} again and
 by the definition of $\hat U_\theta$, we have
\begin{equation*}
\Lom \hat U_\theta(\Vphi)\leq A_0\theta\hat U_\theta(\Vphi).
\end{equation*}
Hence, by a standard argument as in \cite[Proof of Theorem 3.1]{NNY21},
we get
\begin{equation*}
\Ephi e^{-A_0\theta (t\wedge \zeta_u)}\hat U_\theta (X_{t\wedge \zeta_u})\leq \hat U_\theta (\Bphi),
\end{equation*}
where for each $u>0$
$$\zeta_u:=\inf\{t\geq 0: \hat U_\theta (\vec X_t)>u\hat U_\theta (\Bphi)\},$$
which implies that
\begin{equation*}
ue^{-A_0\theta t}\hat U_\theta(\Bphi)\PPphi\{\zeta_u>t\}\leq \hat U_\theta(\Bphi).
\end{equation*}
Thus, for any $u>0$,
\begin{equation}\label{e-p1-eq3}
\PPphi\{\zeta_u>n^*T_e\}\leq \frac{e^{A_0\theta n^*T_e}}{u}.
\end{equation}
Let $q_2,q_3\in (0,q_1)$, $q_2<q_3$, where $q_1$ is as in \eqref{e-p1-eq1}. We obtain from \eqref{e-p1-eq2} that
\begin{equation}\label{e-p1-eq4}
\PPphi\left\{\1_{\{\beta_m>k\}}Z(k)\leq Z(0)q_2^k\right\}\geq 1-\left(\frac {q_1}{q_2}\right)^k.
\end{equation}
We deduce from \eqref{e-p1-eq3}, the Markov property of $\vec X_t$, and \eqref{e-p1-eq4} that
\begin{equation}\label{e-p1-eq5}
\begin{aligned}
\PPphi&\left\{\1_{\{\beta_m>k\}}\sup_{s\in [kn^*T_e,(k+1)n^*T_e]}\hat U_{\theta}(\vec X_s)\leq Z(0)q_3^k\right\}\\
&\geq \left(1-\frac{e^{A_0\theta n^*T_e}}{\left(\frac{q_3}{q_2}\right)^k}\right)\PPphi\left\{\1_{\{\beta_m>k\}}Z(k)\leq Z(0)q_2^k\right\}\\
&\geq \left(1-\frac{e^{A_0\theta n^*T_e}}{\left(\frac{q_3}{q_2}\right)^k}\right)\cdot\left(1-\left(\frac{q_1}{q_2}\right)^k\right)\\
&\geq 1-\left(e^{A_0\theta n^*T_e}\left(\frac{q_2}{q_3}\right)^k+\left(\frac{q_1}{q_2}\right)^k\right),
\end{aligned}
\end{equation}
for any $k>k_0$, where $k_0$ satisfies
$$
e^{A_0\theta n^*T_e}<\left(\frac {q_3}{q_2}\right)^{k_0}.
$$
Since
$$\sum_{k=k_0}^\infty\left(e^{A_0\theta n^*T_e}\left(\frac{q_2}{q_3}\right)^k+\left(\frac{q_1}{q_2}
\right)^k\right)<\infty,$$
by the Borel-Cantelli Lemma, we deduce from \eqref{e-p1-eq5} that
\begin{equation}\label{e-p1-eq6}
\PPphi\left\{\lim_{k\to\infty}\1_{\{\beta_m>k\}}\sup_{s\in [kn^*T_e,(k+1)n^*T_e]}\hat U_\theta (\vec X_s)=0\right\}=1.
\end{equation}
Combining \eqref{e-p1-eq6} and \eqref{e-p1-eq7} implies that
\begin{equation*}
\PPphi\left\{\lim_{s\to\infty}\hat U_\theta (\vec X_s)=0\right\}\geq 1-\eps\text{ if }\hat U_\theta(\Bphi)<m\eps.
\end{equation*}
Then the proposition is proved.
\end{proof}

\begin{lm}\label{e-l4}
Assume Assumption {\rm\ref{asp1}} and {\rm\ref{asp-ginverse}} hold.
For any $\eps,R>0$, $\vec y^*=(y_1^*,\dots,y_n^*)\in \R_+^{n,\circ}:=\left\{\vec y\in \R^n: y_i>0\;\forall i\right\}$, $\delta>0$, $t^*\geq 2r$
\begin{equation}\label{e-l4-eq1}
\inf_{\Bphi\in D_{\eps,R}}\PPphi\left\{\vec X_{t^*}\in B_{\vec y^*,\delta}\right\}>0,
\end{equation}
where
$$B_{\vec y^*,\delta}:=\left\{\Vphi\in\C_+^\circ: \abs{\Vphi(s)-\vec y^*}<\delta;\forall s\in [-r,0]\right\}.$$
\end{lm}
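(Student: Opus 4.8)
The plan is to read \eqref{e-l4-eq1} as a uniform support (controllability) estimate: starting from any $\Bphi\in D_{\eps,R}$ one steers the solution into an arbitrarily thin tube around a nominal path that is frozen at $\vec y^*$ over the last time-window of length $r$, and the probability of staying in this tube is bounded below by a constant depending only on $\eps,R,\vec y^*,\delta,t^*$, not on the individual $\Bphi$. First I would pass to logarithmic coordinates $Y_i=\ln X_i$, under which \eqref{Kol-Eq} reads $dY_i(t)=\big(f_i(\vec X_t)-\tfrac12\sigma_{ii}g_i^2(\vec X_t)\big)dt+g_i(\vec X_t)dE_i(t)$. The diffusion of $\vec Y$ is governed by the matrix $\diag(g_i(\vec X_t))\Gamma^\top$, with covariance $(\sigma_{ij}g_i(\vec X_t)g_j(\vec X_t))$, which is positive definite by Assumption~\ref{asp1}(1); since the factors $x_ix_j$ are bounded above and below on any $D_{\eps',R'}$, Assumption~\ref{asp-ginverse} gives a uniformly bounded inverse there, and as $\Sigma=\Gamma^\top\Gamma$ is nonsingular and each $g_i$ is bounded away from $0$ on the relevant region, $\diag(g_i)\Gamma^\top$ is invertible with uniformly bounded inverse.

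Next, for each $\Bphi$ I would fix a smooth nominal path $\xi=\xi^{\Bphi}:[0,t^*]\to\R_+^{n,\circ}$ with $\xi(0)=\Bphi(0)$ and $\xi(u)\equiv\vec y^*$ for $u\in[t^*-r,t^*]$; the steering interval $[0,t^*-r]$ has length $t^*-r\geq r>0$, so such $\xi$ exists, and since $\Bphi(0)\in[\eps,R]^n$ the whole family $\{\xi^{\Bphi}:\Bphi\in D_{\eps,R}\}$ can be taken inside one fixed compact set $\mathcal Q\subset\R_+^{n,\circ}$ with uniformly bounded velocities. Then I would apply Girsanov's theorem with the feedback drift $\vec u(t)=\big(\diag(g_i(\vec X_t))\Gamma^\top\big)^{-1}\big(\dot{\boldsymbol\eta}(t)-\vec b(\vec X_t)\big)$, where $\boldsymbol\eta=\ln\xi$ and $\vec b$ is the drift of $\vec Y$, so that under the tilted measure $\widetilde\PP$ the drift of $\vec Y$ becomes exactly $\dot{\boldsymbol\eta}$ and $\vec Y(t)-\boldsymbol\eta(t)=\int_0^t\diag(g_i(\vec X_s))\Gamma^\top d\widetilde{\vec B}(s)$. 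Localizing to the exit time from a fixed bounded region with coordinates bounded below (the region traversed by the tube), the coefficients $f_i,g_i$ are bounded by Assumption~\ref{asp1}(2) and the inverse above is bounded, so $\vec u$ is bounded by a constant depending only on $\eps,R,\vec y^*,\delta,t^*$; hence the density $\exp\big(\int_0^{t^*}\vec u\cdot d\widetilde{\vec B}-\tfrac12\int_0^{t^*}|\vec u|^2\big)$ is bounded below on the tube event by a uniform positive constant. A standard small-ball/Gronwall estimate for the non-degenerate martingale $\vec Y-\boldsymbol\eta$ then gives $\widetilde\PP\{\sup_{u\in[0,t^*]}|\vec X(u)-\xi(u)|<\delta\}\geq c_0>0$ uniformly, and transferring back through the density yields $\PPphi\{\sup_{u\in[0,t^*]}|\vec X(u)-\xi(u)|<\delta\}\geq c_1>0$. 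On this event $\vec X(u)$ lies within $\delta$ of $\xi(u)=\vec y^*$ for all $u\in[t^*-r,t^*]$, i.e. $\vec X_{t^*}\in B_{\vec y^*,\delta}$, which is exactly \eqref{e-l4-eq1}.

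The main obstacle is the uniformity of the lower bound over $D_{\eps,R}$, which is bounded but not compact in the infinite-dimensional space $\C$; the subtlety is that during $[0,r]$ the coefficients $f_i(\vec X_t),g_i(\vec X_t)$ depend on the entire history segment, hence on all of $\Bphi$ and not merely on $\Bphi(0)$. The resolution is that only boundedness of $\Bphi$, not compactness, is needed: on $D_{\eps,R}$ the coefficients are uniformly bounded by Assumption~\ref{asp1}(2) and the diffusion is uniformly non-degenerate by Assumption~\ref{asp-ginverse}, the nominal paths $\xi^{\Bphi}$ stay in the fixed compact $\mathcal Q$ with uniformly bounded derivatives, and the traversed region is confined to a fixed $D_{\eps',R'}$, so every constant entering the Girsanov and tube estimates depends only on $\eps,R,\vec y^*,\delta,t^*$. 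I would also note that a purely soft argument --- regularizing into a compact subset of $\C$ via Lemma~\ref{Holder} and then using the Feller property of Proposition~\ref{theorem-2.4}(ii) together with lower semicontinuity on that compact set --- does not by itself suffice, because the H\"older regularization consumes a full interval of length $r$ and leaves no room to steer when $t^*=2r$; the quantitative Girsanov/tube construction is what makes the bound hold uniformly down to the endpoint $t^*=2r$.
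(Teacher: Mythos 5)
Your strategy is the same as the paper's: both proofs are tube/controllability arguments in which one fixes a deterministic nominal path that starts at $\Bphi(0)$, is frozen at $\vec y^*$ on the final window of length $r$, and stays in a region where all coordinates are bounded below, and then one bounds from below --- uniformly over $\Bphi\in D_{\eps,R}$, using only boundedness of the coefficients on bounded sets (Assumption \ref{asp1}(2)) and the uniform non-degeneracy supplied by Assumption \ref{asp-ginverse} on the traversed region --- the probability that the solution stays in a thin tube around that path. The only difference is in execution: the paper sets up the path $\vec k$ and the quantity $D(t)=\abs{\vec X(t)-\vec k(t)}^2-(\delta_1/2)^2$ and then delegates the quantitative core to \cite[Lemma 3.8]{Ha09}, whereas you carry out that core by hand via logarithmic coordinates, a Girsanov change of drift, and a small-ball estimate for the resulting driftless non-degenerate martingale; your observation that the key point is boundedness (not compactness) of $D_{\eps,R}$, and that $D_{\eps,R}$ constrains only $\Vphi(0)$ so the tube event keeps all segments inside a fixed $D_{\eps',R'}$, is exactly what makes the paper's argument work too. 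One step of yours needs repair: the Girsanov density $\exp\bigl(\int_0^{t^*}\vec u\cdot d\widetilde{\vec B}-\tfrac12\int_0^{t^*}\abs{\vec u}^2\,ds\bigr)$ is \emph{not} pointwise bounded below on the tube event merely because $\vec u$ is bounded, since the stochastic integral can be arbitrarily negative there; the standard fix is to transfer the bound by Cauchy--Schwarz, $\widetilde\PP(A)\leq \PPphi(A)^{1/2}\,\Ephi\bigl[(d\widetilde\PP/d\PPphi)^2\bigr]^{1/2}$ with the second factor controlled by the uniform bound on $\vec u$, which yields the same uniform positive lower bound and leaves your conclusion intact.
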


\begin{proof}
To prove \eqref{e-l4-eq1}, we  modify slightly the proof of \cite[Lemma 3.8]{Ha09} as follows.
Let $\delta_1\in (0,\frac{\delta}2)$ be sufficiently small such that $\min\left\{\min_{i=1,\dots,n}y_i^*-\delta_1,\eps-\delta_1\right\}=:2\delta_2>2\delta_1$.
Define
$$D(t):=\abs{\vec X(t)-\vec k(t)}^2-(\delta_1/2)^2,$$
where $\vec k:[0,t^*]\to\R^n$ is continuously differentiable with Lipschitz constant at most $\frac{2(R+|\vec y^*|+\delta)}r$ satisfying
$$
k_i(t)\geq 2\delta_2\;\forall t\in[0,t^*],\;\forall i,
\text{ and }
\vec k(0)=\Bphi(0)-(\delta_1/2,0,\dots,0);\vec k(t)=\vec y^*,t\in[r,t^*].
$$
It is noted that
\begin{equation}\label{e-l4-eq4}
X_i(t)\geq \delta_2\;\forall i\text{ and }\abs{\vec X(t)}<2(R+|\vec y^*|+\delta),t\in[0,t^*]\text{ whenever }\abs{D(t)}\leq (\delta_1/4)^2.
\end{equation}
Hence, under Assumption \ref{asp-ginverse} for
 the diffusion coefficients and \eqref{e-l4-eq4}, we can mimic the remaining of proof in \cite[Lemma 3.8]{Ha09}(with $\vec k$ in place of $\vec h$) and obtain that
\begin{equation*}
\inf_{\Bphi\in D_{\eps,R}}\PPphi\left\{\vec X_{t^*}\in B_{\vec y^*,\delta}\right\}>0.
\end{equation*}
\end{proof}

\begin{thm}
Assume that Assumptions {\rm\ref{asp1}}, {\rm\ref{asp-2}}, {\rm\ref{a.extn}}, and {\rm\ref{asp-ginverse}} hold.
For any $p<p_0$
and any $\Bphi\in\C^{\circ}_+$, we have
\begin{equation}\label{e.extinction}
\lim_{T\to\infty}\frac 1T\int_0^T\Ephi \bigwedge_{i=1}^n \norm{X_{i,t}}^{p}dt=0,
\end{equation}
where $\bigwedge_{i=1}^n x_i=\min_{i=1,\dots,n}\{x_i\}$ and $\vec X_t=(X_{1,t},\dots,X_{n,t})$.
\end{thm}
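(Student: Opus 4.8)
The plan is to combine the near-boundary contraction of $\hat U_\theta$ from Proposition \ref{prop4.1} (which already encodes, through \eqref{e3.3}, the attracting structure of the face $\C_+^I$ granted by Assumption \ref{a.extn}) with the accessibility estimate of Lemma \ref{e-l4} and the recurrence and tightness of the occupation measures (Lemmas \ref{lm4.5} and \ref{lm4.6}). Together these should show that a solution started in $\C_+^\circ$ is repeatedly pulled toward $\C_+^{I,\circ}$, so that on average it spends a vanishing fraction of time away from $\partial\C_+$.

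First I would record two reductions. (i) Since $\hat U_\theta(\Vphi)=\sum_{i\in I^c}V_{\Vrho^{i,e}}^\theta(\Vphi)$ and each summand carries the factor $x_i^{\alpha_*}\prod_{j\in I}x_j^{-\hat\alpha_j}$ with $i\in I^c$, on any event where $\vec X_t$ remains in a fixed bounded region the convergence $\hat U_\theta(\vec X_t)\to0$ forces $\min_{i\in I^c}X_i(t)\to0$, hence $\bigwedge_{i=1}^n\|X_{i,t}\|\to0$; recurrence to such a bounded region is supplied by \eqref{e1-lm4.7}. (ii) By the uniform moment and regularity estimates of Theorem \ref{existence}, Proposition \ref{theorem-2.4} and Lemma \ref{Holder}, the family $\{\bigwedge_i\|X_{i,t}\|^{p}\}_{t}$ is uniformly integrable (a slightly larger power $p_0>p$ has $t$-uniformly bounded expectation). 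A standard splitting at the level $\{\bigwedge_i\|X_{i,t}\|<\eta\}$, bounding the small part by $\eta^p$, then shows that \eqref{e.extinction} follows once one proves $\frac1T\int_0^T\PPphi\{\bigwedge_i\|X_{i,t}\|\ge\eta\}\,dt\to0$ for each fixed $\eta>0$.

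For the core estimate, by Lemma \ref{e-l4} the process reaches, in a fixed time $t^*$ and with probability bounded below, a small ball around any prescribed $\vec y^*\in\R_+^{n,\circ}$; choosing $\vec y^*$ with tiny $I^c$-coordinates places $\vec X_{t^*}$ in $\{\hat U_\theta<\delta\}$, and Proposition \ref{prop4.1} then yields $\hat U_\theta(\vec X_t)\to0$ (hence $\bigwedge_i\|X_{i,t}\|\to0$ by step (i)) with probability at least $1-\eps$. I would iterate these two facts along the successive visits to a fixed bounded set guaranteed by \eqref{e1-lm4.7}: a renewal argument based on the strong Markov property shows that the $\limsup$ over $T$ of the expected fraction of time spent in the bulk $\{\bigwedge_i\|X_{i,t}\|\ge\eta,\ \|\vec X_t\|\le R\}$ is $O(\eps)$. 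To remove the bounded-region restriction I would use the tightness of $\{\wdt\Pi_t\}$ (Lemma \ref{lm4.6}, via \eqref{e-16-eq11}) to discard the time spent in $\{\|\vec X_t\|>R\}$ up to a further $O(\eps)$. Letting $\eps\downarrow0$ gives the Cesàro-probability limit, and reduction (ii) then delivers \eqref{e.extinction}.

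The hard part will be the renewal step, carried out in an infinite-dimensional state space: one must splice the local contraction of $\hat U_\theta$ near $\C_+^{I,\circ}$ to the accessibility from the bulk while keeping in mind that a bounded subset of $\C$ is not precompact, so the only available compactness is the H\"older-norm compactness of Lemma \ref{Holder}. A second delicate point, already visible in reduction (i), is that $\hat U_\theta(\vec X_t)\to0$ controls the current values $X_i(t)$ but not directly the segments $X_{i,t}$; transferring pointwise decay of the $I^c$-coordinates to decay of the norms $\|X_{i,t}\|$ relies on the a priori local boundedness and regularity of the solution (Lemmas \ref{X-bounded} and \ref{Holder}) to ensure the recent history cannot drift far from the present state.
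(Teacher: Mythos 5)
Your opening move --- using Proposition \ref{prop4.1} together with the accessibility Lemma \ref{e-l4} to show that the solution is attracted to the boundary --- is exactly the paper's, but from there the two arguments part ways, and the route you choose is where the trouble lies. The paper does \emph{not} run a renewal argument. It only needs the much weaker statement that from every initial datum the event $\{\lim_{t\to\infty}\hat U_\theta(\vec X_t)=0\}$ has \emph{positive} probability; this already rules out the existence of any invariant probability measure on $\C_+^\circ$ (an ergodic interior measure would force the occupation measures to converge to it almost surely from almost every interior starting point, which is incompatible with a positive chance of drifting to the boundary). Combined with Lemma \ref{lm4.6}, which identifies every weak$^*$ limit of $\wdt\Pi_t(\cdot)$ as an invariant measure, this shows the limits are concentrated on $\partial\C_+$, and the conclusion follows by applying the occupation-measure convergence lemma of Part I to the test function $\Vphi\mapsto\bigwedge_{i}\norm{\varphi_i}^{p}$, which vanishes on $\partial\C_+$ and satisfies the required growth condition. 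No uniform-in-the-starting-point lower bound, and no splicing of excursions, is ever needed.

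The gap in your version is precisely the step you flag as ``the hard part'' and then leave unresolved. Lemma \ref{e-l4} is uniform only over $D_{\eps,R}$, i.e.\ over segments whose \emph{current value} is bounded below by $\eps$ in every coordinate, whereas the recurrence \eqref{e1-lm4.7} returns the process only to the bounded set $\{\norm{\vec X_s}\le\hat K_1\}$. At a return time the process may sit near the boundary without satisfying $\hat U_\theta<\delta$ (smallness of $\min_i X_i(t)$ does not control the ratio $x_i^{\alpha_*}/\prod_{j\in I}x_j^{\hat\alpha_j}$ appearing in $\hat U_\theta$), so such a visit triggers neither the accessibility bound nor the contraction of Proposition \ref{prop4.1}, and your claimed $O(\eps)$ bound on the expected fraction of time spent in the bulk does not follow from the sketch; you would need either a uniform lower bound on the extinction probability over the whole recurrent set or the paper's detour through invariant measures. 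A secondary remark: your worry in reduction (i) about transferring pointwise decay to the segment norms is a non-issue, since $X_i(s)\to0$ as $s\to\infty$ immediately gives $\norm{X_{i,t}}=\sup_{s\in[t-r,t]}X_i(s)\to0$; the genuine infinite-dimensional difficulties of this theorem are absorbed into Lemmas \ref{lm4.5} and \ref{lm4.6}, which the paper's invariant-measure argument then exploits directly.
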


\begin{proof}
It is clear that if $\lim_{t\to\infty}\hat U_\theta(\vec X_t)=0$ then $\vec X_t$ tends to the boundary of $\C_+$ as $t\to\infty$. Moreover,
we can choose suitable $y^*$ and $\delta_1$ such that $\forall \Vphi\in B_{y^*,\delta_1}$, $\hat U_\theta(\Vphi)$ is small enough.
Therefore,
in view of 
Proposition \ref{prop4.1}
and Lemma \ref{e-l4},
the probability that $\vec X_t$ tends to the boundary is positive for any initial data. As a consequence,
there is no invariant probability measure in $\C^\circ_+$.
Therefore, we can deduce that the weak$^*$-limit of $\Pi_t^{\Bphi}(\cdot)$ is a probability measure concentrated on $\partial \C_+$. By noting that the function
$\left(\bigwedge_{i=1}^n \varphi_i^{p}(0)\right)$, $p<p_0$ of variable $\Vphi$ satisfies the condition
\cite[(3.41)]{NNY21},
 the Theorem follows from \cite[Lemma 3.5]{NNY21}. \end{proof}

\begin{lm}\label{lm4.9}
Suppose that Assumption {\rm \ref{asp1}}, {\rm\ref{a.extn}}, and {\rm\ref{a.extn2}} are satisfied and let $I$ be the subset of $\{1,\dots,n\}$
in Assumption {\rm\ref{a.extn}}.
Then
 the following results hold:
\begin{itemize}
\item For any $\Bphi\in\C^{\circ}_+$,
$$\PPphi\Big\{\U(\omega)\subset\conv(\M^{I})\Big\}=\PPphi\Big\{\U(\omega)\subset \conv(\M^{I,\circ})\Big\}.$$
\item For any $m>0$, $\delta>0$, and $\eps>0$,
there is a $R>0$ such that
$$
\begin{aligned}
\PPphi&\bigg\{\U(\omega)\subset\conv(\M^{I,\circ})\,\text{ and }\\
&\qquad\lim_{t\to\infty}\dfrac{\ln X_i(t)}t\subset\left\{\lambda_i(\pi), \pi\in\conv(\M^{I,\circ})\right\}, i\in I^c\bigg\}\\
&>1-\eps,\quad\text{ for all }\Bphi\in\Delta^{m,\delta,R}_I,
\end{aligned}
$$
where
$$
\begin{aligned} 
\Delta^{m,\delta,R}_I:=&\bigg\{\Vphi\in\C^{\circ}_+: m\leq x_i\text{ for } i\in I,x_i<\delta\text{ for }i\in I^c\text{ and }\\
&\qquad V_{\vec 0}(\Vphi)<R,\vec x:=\Vphi(0)\bigg\}.
\end{aligned}
$$
\end{itemize}
\end{lm}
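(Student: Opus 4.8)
The plan is to prove the two assertions in order, with the second relying on the first; the driving mechanism throughout is Proposition~\ref{prop4.1}, which turns smallness of $\hat U_\theta$ at the initial datum into almost-sure decay of $\hat U_\theta(\vec X_t)$, combined with the tightness and uniform integrability of the occupation measures from Lemmas~\ref{lm4.6} and \ref{lm4.5}.

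For the first assertion, the inclusion $\conv(\M^{I,\circ})\subset\conv(\M^I)$ gives $\PPphi\{\U(\omega)\subset\conv(\M^{I,\circ})\}\le\PPphi\{\U(\omega)\subset\conv(\M^I)\}$ for free, so only the reverse is at issue: I must show that, almost surely on $\{\U(\omega)\subset\conv(\M^I)\}$, no weak$^*$ limit charges $\partial\C_+^I$. This is exactly the repelling character of $\partial\C_+^I$ inside the $I$-subsystem: by Lemma~\ref{lm4.1} and \eqref{ae3.2}, every $\nu\in\conv(\partial\M^I)$ satisfies $\max_{i\in I}\lambda_i(\nu)>0$, so the persistence estimates of Part~I \cite{NNY21} (equivalently, a Lyapunov function built from the negative powers $\prod_{i\in I}x_i^{-\hat\alpha_i}$) force the normalized occupation time spent in any fixed neighborhood of $\partial\C_+^I$ to tend to $0$. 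Consequently every $\pi\in\U(\omega)$ assigns zero mass near $\partial\C_+^I$, and when combined with $\pi\in\conv(\M^I)$ this yields $\pi\in\conv(\M^{I,\circ})$, proving equality of the two probabilities.

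For the second assertion I would first make $\hat U_\theta$ uniformly small on $\Delta^{m,\delta,R}_I$ by choosing $R$ sufficiently small relative to $m,\delta,\eps$. Using the factorization $V_{\Vrho^{i,e}}(\Vphi)=V_{\vec 0}(\Vphi)\,x_i^{\alpha_*}/\prod_{j\in I}x_j^{\hat\alpha_j}$, on $\Delta^{m,\delta,R}_I$ one has $\hat U_\theta(\Vphi)\le |I^c|\big(R\,\delta^{\alpha_*}/m^{\sum_{j\in I}\hat\alpha_j}\big)^\theta$, which falls below the threshold $\delta(\eps)$ of Proposition~\ref{prop4.1} once $R$ is small. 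Proposition~\ref{prop4.1} then gives $\PPphi\{\lim_{t\to\infty}\hat U_\theta(\vec X_t)=0\}\ge 1-\eps$ on $\Delta^{m,\delta,R}_I$. On this event I claim $\U(\omega)\subset\conv(\M^I)$: indeed, for fixed $\eta>0$ and $L>0$, the set $\{\Vphi:\|\Vphi\|\le L,\ x_i\ge\eta\text{ for some }i\in I^c\}$ is one on which $\hat U_\theta$ is bounded below by a positive constant, so $\hat U_\theta(\vec X_t)\to0$ forces the normalized occupation time of this set to vanish; letting $L\to\infty$ (tightness, Lemma~\ref{lm4.6}) and then $\eta\to0$ shows every limit $\pi\in\U(\omega)$ is supported on $\C_+^I$ and hence lies in $\conv(\M^I)$. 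The first assertion then upgrades this to $\U(\omega)\subset\conv(\M^{I,\circ})$, which is the first membership defining $P_{\Bphi}^I$.

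It remains to identify the growth rates of the extinct coordinates. For $i\in I^c$ the functional It\^o formula gives $\frac{\ln X_i(t)}{t}=\frac{\ln X_i(0)}{t}+\int_{\C_+}\big(f_i(\Vphi)-\tfrac12\sigma_{ii}g_i^2(\Vphi)\big)\wdt\Pi_t(d\Vphi)+\frac{M_i(t)}{t}$, where $M_i$ is a local martingale with $\frac{M_i(t)}{t}\to0$ by the strong law of large numbers for martingales (its normalized quadratic variation is controlled by Lemma~\ref{lm4.5}). Along any sequence $T_k\to\infty$ with $\wdt\Pi_{T_k}$ converging weakly to some $\pi\in\conv(\M^{I,\circ})$, the second part of Lemma~\ref{lm4.5}—this is precisely where Assumption~\ref{a.extn2} is used—lets me pass to the limit in the occupation-measure integral, giving $\frac{\ln X_i(T_k)}{T_k}\to\lambda_i(\pi)$. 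Hence every limit point of $\frac{\ln X_i(t)}{t}$ lies in $\{\lambda_i(\pi):\pi\in\conv(\M^{I,\circ})\}$, completing the description of $P_{\Bphi}^I$. The main obstacle I anticipate is this last limit passage: because $f_i$ and $g_i$ are unbounded on the infinite-dimensional space $\C$, weak convergence of $\wdt\Pi_{T_k}$ alone does not transfer to the unbounded integrand, and only the $\pi$-uniform integrability furnished by Assumption~\ref{a.extn2} through Lemma~\ref{lm4.5} closes this gap; the same integrability guarantees each $\lambda_i(\pi)$ is finite and, via \eqref{ae3.1}, negative, so that the $I^c$-coordinates indeed go extinct.
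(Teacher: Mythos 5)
Your overall strategy is correct and shares the paper's skeleton (Proposition \ref{prop4.1} to force $\hat U_\theta(\vec X_t)\to 0$, Lemma \ref{lm4.6} for invariance of weak$^*$ limits, Lemma \ref{lm4.5} for the uniform integrability needed to pass to the limit in the occupation-measure integrals, and the functional It\^o formula plus the martingale SLLN for the Lyapunov exponents), but the middle step is argued by a genuinely different route. Where you deduce $\U(\omega)\subset\conv(\M^I)$ directly from the event $\{\lim_t\hat U_\theta(\vec X_t)=0\}$ — lower-bounding $\hat U_\theta$ by $(\eta^{\alpha_*}L^{-\sum_{j\in I}\hat\alpha_j})^\theta$ on $\{\|\Vphi\|\le L,\ x_i>\eta \text{ for some } i\in I^c\}$ and applying Portmanteau, then letting $L\to\infty$, $\eta\to 0$ — the paper instead works with the truncated function $U_e=d(\delta_e)\wedge\hat U_\theta$ sampled only along the skeleton $kn^*T_e$, introduces the set $\A_I=\{V_{\vec 0}\le\hat K_1,\ \max_{i\in I^c}x_i\ge 1\}$ with $\nu(\A_I)>0$ for $\nu\in\M\setminus\M^I$, and rules out positive occupation of $\A_I$ by a contradiction argument combining the hitting time $\tau_{\A_I}$, the strong Markov property, and the third part of Lemma \ref{lm4.5} (which guarantees $\max_{i\in I^c}X_i$ stays above a level $k_0$ for a full window $[\tau_{\A_I},\tau_{\A_I}+n^*T_e]$ with probability $1/2$). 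Your version is cleaner because Proposition \ref{prop4.1} already delivers continuous-time control of $\sup_{s\in[kn^*T_e,(k+1)n^*T_e]}\hat U_\theta(\vec X_s)$ via Borel--Cantelli; the paper's version only needs the weaker skeleton estimate \eqref{e-p1-eq7} and is closer to the finite-dimensional template of \cite{HN18}. Two small points you should make explicit: (i) your Portmanteau step only gives $\pi\{\varphi_i(0)=0,\ i\in I^c\}=1$, and you need the invariance of $\pi$ (Lemma \ref{lm4.6}) to upgrade this to $\pi(\C_+^I)=1$, i.e.\ to $\|\varphi_i\|=0$; (ii) for the first bullet the paper simply cites \cite[Lemma 5.8]{HN18}, and your sketch via \eqref{ae3.2} and the Part I persistence machinery on the face $\C_+^I$ is the right idea at a comparable level of detail.

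One caution on quantifiers. You achieve smallness of $\hat U_\theta$ on $\Delta^{m,\delta,R}_I$ by shrinking $R$, which is literal to the statement as printed but is not what the proof can afford: since $V_{\vec 0}\ge 1$, taking $R$ too small makes $\Delta^{m,\delta,R}_I$ empty and the conclusion vacuous, and for large given $\delta$ your bound $|I^c|\bigl(R\,\delta^{\alpha_*}m^{-\sum_{j\in I}\hat\alpha_j}\bigr)^\theta$ indeed forces $R<1$. The paper's proof in fact fixes $R$ and shrinks $\delta$ (see \eqref{e3-lm4.8}), so the intended quantification is ``for any $m,R,\eps$ there is $\delta$''; your argument goes through verbatim under that reading, and that is the form in which the lemma is actually invoked later.
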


\begin{proof}
The proof of the first part is similar to \cite[Proof of Lemma 5.8]{HN18}.

We proceed to prove the second part.
By the third part of Lemma \ref{lm4.5}, there is a
$k_0>0$ such that
\begin{equation}\label{e2-lm4.8}
\PPphi\left\{\max_{i\in I^c}\{X_i(t)\}>k_0\,,\forall\, t\in[0,n^*T_e]\right\}>\frac{1}{2},\,\Bphi\in\A_I,
\end{equation}
where
$$\A_I=\{\Vphi\in\C_+: V_{\vec 0}(\Vphi)\leq \hat K_1, \,\max_{i\in I^c}\{x_i\}\geq1,\vec x:=\Vphi(0)\}.$$
It can be seen that
\begin{equation}\label{e:nu_supp}
\nu(\A_I)>0 \,\text{ for }\,\nu\in\M\setminus\M^{I}.
\end{equation}
As in Proposition \ref{prop4.1}, consider $U_e(\Vphi):=d(\delta_e)\wedge \hat U_\theta(\Vphi)$, where $\delta_e$ is defined in \eqref{de}.
By the definition of $ U_e(\cdot)$, there is a
$\delta>0$ sufficiently small such that
\begin{equation}\label{e3-lm4.8}
\sup_{\Vphi\in\Delta^{m,\delta,R}_I}\{U_e(\Vphi)\}\leq \dfrac{\eps}{2}\inf_{\Vphi\in\C^{\circ}_+,x_i\geq k_0, \text{ for some }i\in I^c}\{U_e(\Vphi)\}.
\end{equation}
In view of \eqref{e-p1-eq7}, we obtain if $ \Bphi\in\Delta^{m,\delta,R}_I$
\begin{equation*}
\PPphi\left\{U_e(\vec X(kn^*T_e))<\inf_{\Vphi\in\C^{\circ}_+,x_i\geq k_0, \text{ for some }i\in I^c}\{U_e(\Vphi)\},\,\text{ for all } k\in\N\right\}>1-\dfrac\eps2.
\end{equation*}
Thus
\begin{equation}\label{e4-lm4.8}
\PPphi\left\{\max_{i\in I^c}\{X_i(kn^*T_e)\}< k_0\,\text{ for all } k\in\N\right\}>1-\dfrac\eps2\, \text{ if } \Bphi\in\Delta^{m,\delta,R}_I.
\end{equation}

Now, we prove
\begin{equation}\label{e7-lm4.8}
\PPphi\left\{
\lim_{t\to\infty}\dfrac1t\int_0^t \1_{\{\vec X_s\in \A_I\}}ds=0
\right\}>1-\eps, \,\Bphi\in \Delta^{m,\delta,R}_I,
\end{equation}
by a contradiction argument. Assume that
there is a $\Bphi\in\Delta^{m,\delta,R}_I$ satisfying
\begin{equation}\label{e5-lm4.8}
\PPphi\left\{
\limsup_{t\to\infty}\dfrac1t\int_0^t \1_{\{\vec X_s\in \A_I\}}ds>0
\right\}>\eps.
\end{equation}
Then
\begin{equation}\label{e6-lm4.8}
\PPphi\{\tau_{\A_I}<\infty\}>\eps,
\end{equation}
where $\tau_{\A_I}=\inf\{t>0: \vec X_t\in\A_I\}$.
By the strong Markov property of $\{\vec X_t\}$,
it follows from \eqref{e2-lm4.8} and \eqref{e6-lm4.8}
that
$$
\PPphi\left(\{\tau_{\A_I}<\infty\}\bigcap\left\{\max_{i\in I^c}\{X_i(t)\}\geq k_0\,\text{ for } t\in[\tau_{\A_I},\tau_{\A_I}+n^*T_e]\right\}\right)>\frac{1}{2}\eps,
$$
which contradicts \eqref{e4-lm4.8} and hence, \eqref{e7-lm4.8} holds.

We observe that if for an $\omega\in\Omega$ and a sequence $\{t_j\}$ with $\lim_{j\to\infty}t_j=\infty$,
 $\wdt \Pi_{t_j}(\cdot)$ converges weakly to an invariant probability of the form
$\pi_0=(1-q)\pi_1+q\pi_2$
 with $q\in[0,1]$,
 $\pi_1\in\conv(\M^{I})$, and $\pi_2\in\conv(\M\setminus\M^{I})$,
 then by \eqref{e:nu_supp}
$$\limsup_{j\to\infty}\dfrac1{t_j}\int_0^{t_j} \1_{\{\vec X_s\in \A_I\}}ds\geq \pi_0(\A_I)\geq q\pi_2(\A_I).$$
This inequality combined with Lemma \ref{lm4.6}, \eqref{e:nu_supp}, and \eqref{e7-lm4.8} implies that $q=0$ and
$$\PPphi\left\{
\U(\omega)\subset\conv(\M^{I})
\right\}>1-\eps, \,\Bphi\in \Delta^{m,\delta,R}_I.
$$
The first claim of Lemma \ref{lm4.9} and the above estimates lead to
\begin{equation}\label{e8-lm4.8}
\PPphi\Big\{
\U(\omega)=\{\conv(\M^{I,\circ})\}
\Big\}>1-\eps, \Bphi\in\Delta^{m,\delta,R}_I.
\end{equation}
In view of Lemma \ref{lm4.5} and \eqref{e8-lm4.8}, we have for $\Bphi\in\Delta^{m,\delta,R}_I$ and for each $i=1,\dots,n$ that
\begin{equation}\label{e9-lm4.8}
\PPphi\left\{\lim_{t\to\infty}\dfrac1t\int_0^{t}\left(f_i(\vec X_s)-\dfrac{\sigma_{ii}g_i^2(\vec X_s)}2\right)ds\subset \left\{\lambda_i(\pi): \pi\in\conv(\M^{I,\circ})\right\}\right\}>1-\eps.
\end{equation}
On the other hand, it is easy to see
\begin{equation}\label{e6-lm4.9}
\PPphi\left\{\lim_{t\to\infty}\dfrac1t\int_0^tg_i(\vec X_s)dE_i(s)=0,\,i=1,\dots,n\right\}=1.
\end{equation}
The second claim of Lemma \ref{lm4.9} follows from \eqref{e9-lm4.8}, \eqref{e6-lm4.9}, and an application of the functional It\^o formula.
\end{proof}

With the above
Lemmas in hand, we can modify slightly the proof of \cite[Theorem 5.2]{HN18} to obtain the following Theorem.

\begin{thm}
Suppose that Assumptions {\rm\ref{asp1}}, {\rm\ref{a.extn}}, {\rm\ref{asp-ginverse}}, {\rm\ref{a.extn2}}, and {\rm\ref{a.extn3}} are satisfied.
Then for any $\Bphi\in\C^{\circ}_+$
\begin{equation}
\sum_{I\in S} P_{\Bphi}^I=1,\quad P_{\Bphi}^I>0,
\end{equation}
where for $\Bphi\in\C^{\circ}_+$,
$$
\begin{aligned} 
P_{\Bphi}^I:=\PPphi\bigg\{\U(\omega)&\subset\conv(\M^{I,\circ})\,\text{ and }\\
&\lim_{t\to\infty}\dfrac{\ln X_i(t)}t\in\left\{\lambda_i(\pi), \pi\in\conv(\M^{I,\circ})\right\}, i\in I^c\bigg\}.
\end{aligned}
$$
\end{thm}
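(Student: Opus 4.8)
The plan is to globalize the local convergence statement of Lemma~\ref{lm4.9} by means of the accessibility afforded by non-degeneracy, following the scheme of \cite[Theorem 5.2]{HN18}. By Lemma~\ref{lm4.6}, for $\PPphi$-almost every $\omega$ the weak$^*$-limit set $\U(\omega)$ is a nonempty family of invariant probability measures of $\vec X_t$. The extinction result established in \eqref{e.extinction}, together with the absence of an invariant measure in $\C^{\circ}_+$ noted in its proof, forces every element of $\U(\omega)$ to be supported on $\partial\C_+$, so that $\U(\omega)\subset\conv(\M)$ almost surely. The task is therefore to show that, almost surely, $\U(\omega)$ concentrates on a single attracting face $\conv(\M^{I,\circ})$ with $I\in S$, and that the empirical growth rates $\frac1t\ln X_i(t)$, $i\in I^c$, settle into $\{\lambda_i(\pi):\pi\in\conv(\M^{I,\circ})\}$.

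For the decomposition $\sum_{I\in S}P_\Bphi^I=1$, note first that distinct interior faces carry disjoint families of ergodic measures, so the events defining the $P_\Bphi^I$ are mutually exclusive and it suffices to prove their union has full measure. I would argue by excluding the non-attracting faces: if some limit measure in $\U(\omega)$ assigned positive weight to $\cup_{J\notin S}\M^{J,\circ}$, then by Assumption~\ref{a.extn3} the associated invasion rate satisfies $\max_i\lambda_i(\cdot)>0$, which is incompatible with the realized trajectory approaching the boundary, since along such a limit some coordinate would exhibit a strictly positive time-averaged growth rate (by the ergodic averaging of Lemma~\ref{lm4.5} and Lemma~\ref{lm4.6}). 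Hence $\U(\omega)\subset\conv(\cup_{I\in S}\M^{I})$ almost surely. Concentration on a single $I$ then follows from the attractor structure encoded in Assumption~\ref{a.extn}: condition~\eqref{ae3.2} makes each sub-boundary $\partial\M^I$ repelling inside the face, so the repelling estimate driving Proposition~\ref{prop4.1} prevents $\U(\omega)$ from mixing an attracting face with measures on its own sub-boundary. Combining this with the first part of Lemma~\ref{lm4.9}, which identifies $\conv(\M^I)$ and $\conv(\M^{I,\circ})$ as limit sets, yields $\U(\omega)\subset\conv(\M^{I,\circ})$ for a single $I\in S$, and the growth-rate identification is then exactly the content of the second part of Lemma~\ref{lm4.9}.

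For the strict positivity $P_\Bphi^I>0$ with $I\in S$, I would use accessibility of the face interiors. Fix $I\in S$ and choose an interior target $\vec y^*\in\R_+^{n,\circ}$ whose $I^c$-coordinates are below the threshold $\delta$ of Lemma~\ref{lm4.9} and whose $I$-coordinates exceed $m$. Starting from $\Bphi\in\C^{\circ}_+$, the solution enters a fixed set $D_{\eps,R}$ with positive probability by the bounds of Theorem~\ref{existence} and Lemma~\ref{X-bounded}; Lemma~\ref{e-l4} then guarantees, uniformly over $D_{\eps,R}$, a positive probability of reaching the neighborhood $B_{\vec y^*,\delta}$, which we may arrange to sit inside $\Delta^{m,\delta,R}_I$. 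Once the segment process lands in $\Delta^{m,\delta,R}_I$, the second part of Lemma~\ref{lm4.9} gives probability at least $1-\eps$ that $\U(\omega)\subset\conv(\M^{I,\circ})$ with the stated growth rates. The strong Markov property then multiplies these two positive quantities to yield $P_\Bphi^I>0$.

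The main obstacle is the step localizing $\U(\omega)$ to a single attracting face. In finite dimensions one exploits the Feller property on compacta and a clean ergodic decomposition of the limit set; here the segment process lives in the infinite-dimensional space $\C$, where bounded sets need not be precompact and where the state may be near the boundary while its history is not. Controlling the dynamics near the intersections of faces, so as to rule out limit measures that straddle two faces or sit on a repelling sub-boundary, therefore requires the H\"older-regularity and local-compactness estimates of Lemma~\ref{Holder} and Proposition~\ref{theorem-2.4} to substitute for compactness, exactly as in the replacement used in Lemma~\ref{lm4.2}. Verifying that these substitutions go through uniformly, and that the repelling estimate of Proposition~\ref{prop4.1} still applies along the sub-boundaries of an already-attracting face, is the delicate part of the argument.
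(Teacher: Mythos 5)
Your proposal is correct and follows essentially the same route as the paper, whose own proof consists of the single remark that one ``modifies slightly'' the proof of \cite[Theorem 5.2]{HN18} using the preceding lemmas: you assemble exactly those ingredients (Lemma~\ref{lm4.6} for tightness and invariance of the limit measures, the extinction theorem to push $\U(\omega)$ onto $\partial\C_+$, Assumption~\ref{a.extn3} plus the occupation-of-$\A_I$ estimate behind Lemma~\ref{lm4.9} to exclude non-attracting faces and mixtures, Lemma~\ref{e-l4} for accessibility, and Lemma~\ref{lm4.9} for the local convergence and growth rates). You also correctly identify the infinite-dimensional compactness issue, handled via Lemma~\ref{Holder} and Proposition~\ref{theorem-2.4}, as the point where the finite-dimensional argument of \cite{HN18} needs modification.
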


\section{Applications}\label{sec:app}
This section
presents a number of applications of our main results
Theorems \ref{thm4.1} and \ref{thm4.2}.
We provide sufficient conditions for the extinction of several popular biological and ecological systems.
These results are complements of
the permanence characterization in the first part \cite{NNY21}
in
that excluding the critical cases, if the system is not permanent, the extinction will happen and vice versa.

\subsection{Stochastic delay Lotka-Volterra competitive model}\label{subsec:1}
Introduced in \cite{Lot25,Vol26} by Lotka and Volterra in 1926, the Lotka-Volterra model is one of the most important models in mathematical biology and
has been studied widely in
the literature.
The Lotka-Volterra competitive models are used to describe the dynamics of the species when
they live in proximity, share the same basic
resources,
and compete for food, habitat, territory, etc.
Because of the influences of many complex properties in real life, other terms (white noises, Markov switching, delayed time, etc.) are added to the original system to reflect better the phenomena.
Stochastic delay Lotka-Volterra competitive models
have also been
widely studied;
see, for example, \cite{Mao04,Liu17} and references therein.
However, there is no unified general framework to handle that
except the work \cite[Section 5.1]{NNY21}, which provided criteria for persistence.

For the case of two-dimensional competitive stochastic delay system, this kind model takes the form
\begin{equation}\label{5.1-eq0}
\begin{cases}
dX_1(t)=X_1(t)\left(a_1-b_{11}X_1(t)-b_{12}X_2(t)-\hat b_{11}X_1(t-r)-\hat b_{12}X_2(t-r)\right)dt\\
\hspace{2cm}+X_1(t)dE_1(t),\\[1ex]
dX_2(t)=X_2(t)\left(a_2-b_{21}X_1(t)-b_{22}X_2(t)-\hat b_{21}X_1(t-r)-\hat b_{22}X_2(t-r)\right)dt\\
\hspace{2cm}+X_2(t)dE_2(t),
\end{cases}
\end{equation}
where
$X_i(t)$ denotes the size of the species $i$ at time $t$;
$a_i>0$ represents the growth rate of the species $i$;
$b_{ii}>0$ stands for the intra-specific competition of the $i^{th}$ species;
$b_{ij}\geq 0$, ($i\neq j$) is the inter-specific competition;
$\hat b_{ij}> -b_{ii}$ ($i,j=1,2$) (i.e., $\hat b_{ij}$ can be negative);
$r$ is the delay time;
$(E_1(t),E_2(t))^\top=\Gamma^\top\vec B(t)$ with
$\vec B(t)=(B_1(t), B_2(t))^\top$ being a vector of independent standard Brownian motions and
$\Gamma$ being a $2\times 2$ matrix such that
$\Gamma^\top\Gamma=(\sigma_{ij})_{2\times 2}$ is a positive definite matrix.

As a complement of
\cite[Section 5.1]{NNY21} that provides
the conditions for the persistence,
 we continue to characterize the extinction to complete the long-time characterization in this paper.
Applying
our Theorems in Section \ref{sec:res},
we
have that $\lambda_i(\bdelta^*)=a_i-\dfrac{\sigma_{ii}}2, i=1,2$.
In view of Theorem \ref{thm4.1}, if $\lambda_1(\bdelta^*)<0$, (resp. $\lambda_2(\bdelta^*)<0$) there is no invariant probability measure on $\C^{\circ}_{1+}:=\{(\varphi_1,0)\in\C_+: \varphi_1(s)>0\;\forall s\in[-r,0]\}$ (resp. $\C^{\circ}_{2+}:=\{(0,\varphi_2)\in\C_+: \varphi_2(s)>0\;\forall s\in[-r,0]\}$).
By Lemma \ref{lm4.1},
we have
$$\lambda_i(\pi_i)=a_i-\dfrac{\sigma_{ii}}2-\int_{\C^{\circ}_{i+}}\left(b_{ii}\varphi_i(0)+\hat b_{ii}\varphi_i(-r)\right)\pi_i(d\Vphi)=0, \text{ where }\Vphi=(\varphi_1,\varphi_2),$$
which implies
\begin{equation}\label{e2-ex1}
\int_{\C^{\circ}_{i+}}\left(b_{ii}\varphi_i(0)+\hat b_{ii}\varphi_i(-r)\right)\pi_i(d\Vphi)=a_i-\dfrac{\sigma_{ii}}{2}.
\end{equation}
Since $\pi_i$ is an invariant probability measure of $\{\vec X_t\}$, it is easy to see that
\begin{equation}\label{ex1-eq55}
\int_{\C^{\circ}_{i+}}\varphi_i(0)\pi_i(d\Vphi)=\lim_{T\to\infty}\dfrac 1T\int_{0}^T X_{i,t}(0)dt=\lim_{T\to\infty}\dfrac 1T\int_{0}^T X_{i}(t)dt,
\end{equation}
where $(X_{1,t},X_{2,t})=\vec X_t$. Similarly,
\begin{equation}\label{ex1-eq56}
\int_{\C^{\circ}_{i+}}\varphi_i(-r)\pi_i(d\Vphi)=\lim_{T\to\infty}\dfrac 1T\int_{0}^T X_{i}(t-r)dt.
\end{equation}
By virtue of \eqref{ex1-eq55} and \eqref{ex1-eq56}, we can prove that
\begin{equation}\label{ex-eq00}
\int_{\C^{\circ}_{i+}}\varphi_i(0)\pi_i(d\Vphi)=\int_{\C^{\circ}_{i+}}\varphi_i(-r)\pi_i(d\Vphi).
\end{equation}
Combining \eqref{e2-ex1} and \eqref{ex-eq00} yields that
$$
\int_{\C^{\circ}_{i+}}\varphi_i(0)\pi_i(d\Vphi)=\int_{\C^{\circ}_{i+}}\varphi_i(-r)\pi_i(d\Vphi)=\dfrac{a_i-\frac{\sigma_{ii}}2}{b_{ii}+\hat b_{ii}}.
$$
Therefore, we have
$$
\begin{aligned}
\lambda_2(\pi_1)&=\int_{\C^{\circ}_{1+}}\left[a_2-\frac{\sigma_{22}}{2}-b_{21}\varphi_1(0)-\hat b_{21}\varphi_1(-r)\right]\pi_1(d\Vphi)\\
&=a_2-\frac{\sigma_{22}}{2}-\left(a_1-\frac{\sigma_{11}}2\right)\cdot\dfrac{b_{21}+\hat b_{21}}{b_{11}+\hat b_{11}},
\end{aligned}
$$
and
$$
\begin{aligned}
\lambda_1(\pi_2)&=\int_{\C^{\circ}_{2+}}\left[a_1-\frac{\sigma_{11}}{2}-b_{12}\varphi_2(0)-\hat b_{12}\varphi_2(-r)\right]\pi_2(d\Vphi)\\
&=a_1-\frac{\sigma_{11}}{2}-\left(a_2-\frac{\sigma_{22}}2\right)\cdot\dfrac{b_{12}+\hat b_{12}}{b_{22}+\hat b_{22}}.
\end{aligned}
$$
By applying Theorem \ref{thm4.2} to characterize the extinction together with the characterization of  persistence in \cite[Section 4.1]{NNY21}, we have the following results.
\begin{itemize}
	\item If $\lambda_i(\bdelta^*)<0, i=1,2$ then $X_i(t)$ converges to $0$ almost surely with the exponential rate $\lambda_i(\bdelta^*)$ for any initial condition $\Bphi=(\phi_1,\phi_2)\in\C^{\circ}_+$.
	\item If $\lambda_i(\bdelta^*)>0, \lambda_j(\bdelta^*)<0$ for one $i\in\{1,2\}$ and $j\in\{1,2\}\setminus\{i\}$, then $\lambda_j(\pi_i)<0$  and $X_j(t)$ converges to $0$ almost surely with the exponential rate $\lambda_j(\pi_i)$ for any initial condition $\Bphi=(\phi_1,\phi_2)\in\C^{\circ}_+$ and the randomized occupation measure
	converges weakly to $\pi_i$ almost surely.
	\item If $\lambda_i(\bdelta^*)>0$, $i\in\{1,2\}$ and $\lambda_1(\pi_2)<0,\lambda_2(\pi_1)<0$  then $P^{\Bphi}_i>0,i=1,2$ and $P^{\Bphi}_1+P^{\Bphi}_2=1$ where
	$$P^{\Bphi}_i=\PPphi\left\{\U(\omega)=\{\pi_i\}\text{ and }\lim_{t\to\infty}\dfrac{\ln X_j(t)}t=\lambda_j(\pi_i), j\in\{1,2\}\setminus\{i\}\right\}.$$
	\item If $\lambda_1(\bdelta^*)>0, \lambda_2(\bdelta^*)>0$, $\lambda_j(\pi_i)<0, \lambda_i(\pi_j)>0$ for $i,j\in\{1,2\}, i\ne j$ then $X_j(t)$ converges to $0$ almost surely with the exponential rate $\lambda_j(\pi_i)$ and the randomized occupation measure
	converges weakly to $\pi_i$ almost surely for any initial condition $\Bphi=(\phi_1,\phi_2)\in\C^{\circ}_+$.
	\item If $\lambda_1(\bdelta^*)>0, \lambda_2(\bdelta^*)>0$ and $\lambda_1(\pi_2)>0, \lambda_2(\pi_1)>0$,
	any invariant probability measure in $\partial\C_+$ has the form
	$\pi=q_0\bdelta^*+q_1\pi_1+q_2\pi_2$ with $0\leq q_0,q_1,q_2$ and $q_0+q_1+q_2=1$.
	Then, one has
	$\max_{i=1,2}\left\{\lambda_i(\pi)\right\}>0$
	for any $\pi$ having the form above.
	Therefore, there is a unique invariant probability measure $\pi^*$ on $\C^{\circ}_+$.
\end{itemize}
The above characterization generalizes the results of long-term properties in \cite{Liu17}.

Although we only provide the explicit computations for $2$-dimension cases,
our results (in both this paper and \cite{NNY21}) can be applied to characterize the long-time behavior of solutions for stochastic delay Lotka-Volterra competitive models with $n$-species,

\subsection{Stochastic delay Lotka-Volterra predator-prey model}
This section is devoted to the application of our results to stochastic delay Lotka-Volterra predator-prey models.
In contrast to Lotka-Volterra competitive model in which two species compete for food, habitat, territory, etc, the Lotka-Volterra predator-prey models
are
frequently used to describe the dynamics of biological systems in which two species interact, one as a predator and the other one as prey.
The Lotka-Volterra predator-prey system with one prey and two competing predators is given as follows
\begin{equation}\label{eq-ex-2}
\begin{cases}
dX_1(t)=X_1(t)\Big\{a_1-b_{11} X_1(t)-b_{12}X_2(t)-b_{13}X_3(t)\\
\hspace{2cm}-\hat b_{11}X_1(t-r)-\hat b_{12}X_2(t-r)- \hat b_{13} X_3(t-r)
\Big\}dt+X_1(t)dE_1(t),\\[1ex]
dX_2(t)=X_2(t)\Big\{-a_2+b_{21} X_1(t)-b_{22}X_2(t)-b_{23}X_3(t)\\
\hspace{2cm}-\hat b_{21}X_1(t-r)-\hat b_{22}X_2(t-r)- \hat b_{23} X_3(t-r)
\Big\}dt+X_2(t)dE_2(t),\\[1ex]
dX_3(t)=X_3(t)\Big\{-a_3+b_{31} X_1(t)-b_{32}X_2(t)-b_{33}X_3(t)\\
\hspace{2cm}-\hat b_{31}X_1(t-r)-\hat b_{32}X_2(t-r)- \hat b_{33} X_3(t-r)
\Big\}dt+X_3(t)dE_3(t),
\end{cases}
\end{equation}
where
$X_1(t)$, $X_2(t)$, and $X_3(t)$
denote the densities at time $t$ of the prey, and two predators,
respectively;
$a_1>0$
denotes  the growth rate;
$a_2,a_3>0$
represent
the death rate of $X_2,X_3$;
$b_{ii}>0, i=1,2,3$
are the intra-specific competition coefficient of $X_i$;
$b_{ij}\geq 0, i\neq j=1,2,3$, in which $b_{12},b_{13}$ represent the capture rates, $b_{21},b_{31}$ represent the growth from food, and $b_{23}$ and  $b_{32}$ signify the competitions between predators (species 2 and 3);
 $\hat b_{ij}$ is either positive or in $(-b_{ii},0]$;
$r$ is the time delay for each $i,j\in\{1,2,3\}$;
$(E_1(t),E_2(t),E_3(t))^\top=\Gamma^\top\vec B(t)$ with
$\vec B(t)=(B_1(t),B_2(t), B_3(t))^\top$ being a vector of independent standard Brownian motions and
$\Gamma$ being a $3\times 3$ matrix such that
$\Gamma^\top\Gamma=(\sigma_{ij})_{3\times 3}$ is a positive definite matrix.
It is worth noting that system \eqref{eq-ex-2} is the (stochastic delay)
Lotka-Volterra model
with two predators
competing for one prey,
which was considered in \cite{Liu17-dcds}.
If we switch the sign of $a_i$ or $b_{ij},i\neq j$, we can obtain a stochastic time-delay
Lotka-Volterra system with the prey and the mesopredator or intermediate predator.
The case involving a superpredator or top predator, was studied in \cite{Liu17-1,Wu19}, and the stochastic time-delay
Lotka-Volterra system with one predator and two preys was investigated in \cite{Gen17}.

Our assumptions are verified for \eqref{eq-ex-2} in the first part \cite[Section 5.2]{NNY21}.
To characterize the extinction,
first, let us consider the equation on the boundary $\{(0, \varphi_2, \varphi_3)\in\C_+: \varphi_2(s), \varphi_3(s)\geq0\;\forall s\in[-r,0]\}$.
Since $\lambda_i(\bdelta^*)=-a_i-\frac{\sigma_{ii}}{2}<0, i=2,3$,
by applying Theorem \ref{thm4.1} for the space $\{(0, \varphi_2, \varphi_3): \varphi_2(s), \varphi_3(s)\geq0\;\forall s\in[-r,0]\}$, we obtain that there is only one invariant probability measure on $\{(0, \varphi_2, \varphi_3): \varphi_2(s), \varphi_3(s)\geq0\;\forall s\in[-r,0]\}$,
which is $\bdelta^*$.
It indicates that without the prey, both predators die out.

Second, we consider the equation on the boundaries $\C_{12+}:=\{(\varphi_1, \varphi_2, 0)\in\C_+: \varphi_1(s), \varphi_2(s)\geq0\;\forall s\in[-r,0]\}$
and
$\C_{13+}:=\{(\varphi_1, 0, \varphi_3)\in\C_+: \varphi_1(s),\varphi_3(s)\geq0,\;\forall s\in[-r,0]\}$.
If $\lambda_1(\bdelta^*)=a_1-\frac{\sigma_{11}}{2}<0$, an application of Theorem \ref{thm4.1} implies that $\bdelta^*$ is the unique invariant probability measure on $\C_+$.
If $\lambda_1(\bdelta^*)>0$, there is an invariant probability measure $\pi_1$ on $\C^{\circ}_{1+}:=\{(\varphi_1, 0, 0)\in\C_+: \varphi_1(s)>0\;\forall s\in[-r,0]\}$.

In view of Lemma \ref{lm4.1}, we obtain
\begin{equation}\label{e2-ex2}
\int_{\C^{\circ}_{1+}}\left(b_{11}\varphi_1(0)+\hat b_{11}\varphi_1(-r)\right)\pi_1(d\Vphi)=a_1-\dfrac{\sigma_{11}}2.
\end{equation}
Similar to the process of getting \eqref{ex-eq00}, we  obtain from \eqref{e2-ex2} that
$$
\int_{\C^{\circ}_{1+}}\varphi_1(0)\pi_1(d\Vphi)=\int_{\C^{\circ}_{1+}}\varphi_1(-r)\pi_1(d\Vphi)=\dfrac{a_1-\frac{\sigma_{11}}2}{b_{11}+\hat b_{11}}.
$$
Therefore,
$$
\begin{aligned}
\lambda_i(\pi_1)&=\int_{\C^{\circ}_{1+}}\left[-a_i-\frac{\sigma_{ii}}{2}+b_{i1}\varphi_1(0)-\hat b_{i1}\varphi_1(-r)\right]\pi_1(d\Vphi)\\
&=-a_i-\frac{\sigma_{ii}}{2}+\left(a_1-\dfrac{\sigma_{11}}2\right)\cdot\dfrac{b_{i1}-\hat b_{i1}}{b_{11}+\hat b_{11}},\; i=2,3.
\end{aligned}
$$
If $\lambda_1(\bdelta^*)>0$ and $\lambda_i(\pi_1)<0, i=2,3$, in view of Theorem \ref{thm4.1},
there is no invariant probability measure on $\C^\circ_{1i+}.$

By Theorem \ref{thm4.1} and Theorem \ref{thm4.2}, we have the following classification for extinction.
\begin{itemize}
	\item If $\lambda_1(\bdelta^*)<0$ then for any initial condition $\Bphi\in\C^{\circ}_+$, $X_1(t), X_2(t), X_3(t),$ converge to $0$ almost surely with the exponential rates $\lambda_i(\bdelta^*), i=1,2,3$, respectively.
	\item If $\lambda_1(\bdelta^*)>0$, $\lambda_i(\pi_1)<0, i=2,3$ then $X_i(t), i=2,3$ converge to $0$ almost surely with the exponential rate $\lambda_i(\pi_1), i=2,3$, respectively, and the occupation measure converges almost surely for any initial condition $\Bphi\in\C^{\circ}_+$ to $\pi_1$.
	\item If $\lambda_1(\bdelta^*)>0$, $\lambda_i(\pi_1)>0$, $\lambda_j(\pi_{1i})<0$, and $\lambda_j(\pi_{1})<0$  for $i,j\in\{2,3\}$ and $i\ne j$,  then $X_j(t)$ converges to  $0$ almost surely with the exponential rate $\lambda_j(\pi_{1i})$ and the occupation measure converges almost surely for any initial condition $\Bphi\in\C^{\circ}_+$ to $\pi_{1i}$.
	\item If $\lambda_1(\bdelta^*)>0$, $\lambda_2(\pi_1)>0$, $\lambda_3(\pi_1)>0$, $\lambda_j(\pi_{1i})<0$, $\lambda_i(\pi_{1j})>0$ for $i,j\in\{2,3\}$ and $i\ne j$,
	then $X_j(t)$ converges to  $0$ almost surely with the exponential rate $\lambda_j(\pi_{1i})$ and the occupation measure converges almost surely for any initial condition $\Bphi\in\C^{\circ}_+$ to $\pi_{1i}$.
	\item If $\lambda_1(\bdelta^*)>0$, $\lambda_2(\pi_1)>0$, $\lambda_3(\pi_1)>0$, $\lambda_2(\pi_{13})<0$, $\lambda_3(\pi_{12})<0$,  then $p^{\Bphi}_i>0,i=2,3$ and $p^{\Bphi}_2+p^{\Bphi}_3=1$, where
	$$p^{\Bphi}_i=\PPphi\left\{\U(\omega)=\{\pi_{1i}\}\,\text{ and }\,\lim_{t\to\infty}\dfrac{\ln X_i(t)}t=\lambda_i(\pi_{1j}), i\in\{2,3\}\setminus\{j\}\right\}.$$
\end{itemize}
The above assertions
generalize the results in \cite{Liu17-dcds}. Moreover, if we switch the sign of $a_i$ or $b_{ij},i\neq j$
as we mentioned at the beginning of this section and modify slightly
the
above characterization, we improve the results in \cite{Gen17,Liu17-1,Wu19}.

Restricting
our analysis and setting to $\C_{12+}$, which describes the evolution of one predator and its prey, we obtain
\begin{equation}\label{e3-ex2}
\begin{cases}
dX_1(t)=X_1(t)\left\{a_1-b_{11} X_1(t)-\hat b_{11}X_1(t-r)-b_{12}X_2(t)-\hat b_{12}X_2(t-r)
\right\}dt\\
\hspace{2cm}+X_1(t)dE_1(t),\\[1ex]
dX_2(t)=X_2(t)\left\{-a_2+b_{21}X_1(t)+\hat b_{21}X_1(t-r)-b_{22}X_2(t)-\hat b_{22}X_2(t-r)
\right\}dt\\
\hspace{2cm}+X_2(t)dE_2(t).
\end{cases}
\end{equation}
The above characterization can be specialized as:
\begin{itemize}
	\item If $\lambda_1(\bdelta^*)<0$ then $X_1(t), X_2(t)$ converge to 0 almost surely with the exponential rates $\lambda_1(\bdelta^*)$ and
	$\lambda_2(\bdelta^*)$, respectively.
	\item If $\lambda_1(\bdelta^*)>0$ and $\lambda_2(\pi_1)<0$ then
	$X_2(t)$ converges to $0$ almost surely with the exponential rate $\lambda_2(\pi_1)$ and the occupation measure
	converges to $\pi_1$.
\end{itemize}
This result generalizes that of \cite{Liu13}.

\subsection{Stochastic delay replicator equation}
The replicator equation, which is a deterministic monotone, non-linear, and non-innovative game dynamic system plays a
popular and important role in evolutionary game theory.
Such an equation was
introduced in 1978 by Taylor and Jonker in \cite{Tay78}.
Since then significant
contributions have been made in biology \cite{Hof98,Now04},  economics \cite{Wei97},
and
optimization and control for a variety of systems \cite{Bom00, Oba14,Ram10, Tem10}.
 To capture the random factors in nature, the deterministic system has been generalized to
stochastic
systems.
This section is devoted to applying our main results to stochastic delay replicator equation.
The replicator dynamics for a game with $n$ strategies,
involving
social-type time delay (see, e.g., \cite{AM04} for details of such delays)
and white noise perturbation
is given by
\begin{equation}\label{ex5-eq1}
\begin{cases}
dx_i(t)=x_i(t)\bigg(f_i(\vec x(t-r))-\dfrac 1X\displaystyle\sum_{j=1}^n x_j(t)f_j(\vec x(t-r))\bigg)dt\\
\hspace{2cm}+x_i(t)\bigg(\sigma_idB_i(t)-\dfrac 1X\displaystyle\sum_{j=1}^n \sigma_j x_jdB_j(t)\bigg);\;i=1,\dots,n,\\
\vec x(s)=\vec x_0(s);\;t\in[-r,0],
\end{cases}
\end{equation}
where
$X$ is the size of the populations;
$x_i(t)$
is the portion of population that has selected the $i^{th}$ strategy and the distribution of the whole population among the strategy;
the fitness functions $f_i(\cdot):\R^n_+\to\R$, $i=1,\dots,n$ are the payoffs obtained by the individuals playing the $i^{th}$ strategy;
$r$ is the time delay;
and  $\vec x_0(s)\in \Delta_X:=\{\vec x\in\R_+^n:\sum_{i=1}^nx_i=X\}$ for all $s\in[-r,0]$ is the initial value.
Some special cases of \eqref{ex5-eq1}
has been
studied in literature.
For instance,
\cite{Imh09,Imh05}
considered equation \eqref{ex5-eq1} without time delay
in the case $f_i(\cdot):\R^n_+\to \R, i=1,\dots,n$ being linear mappings;
and
\cite{AM04,Oba16} studied
the deterministic version of equation \eqref{ex5-eq1}.

Recall that by a similar argument as in \cite{Oba16,Wei97}, we can show that $\Delta_X$ remains invariant\a.s As a consequence, our assumptions are verified.
Hence, our results in
this paper
(Theorem \ref{thm4.1} and Theorem \ref{thm4.2}) hold for \eqref{ex5-eq1}; see \cite[Section 4.3]{NNY21}.
We first apply our results
to characterize the extinction for some
low-dimensional systems.
Consider equation \eqref{ex5-eq1} for two-dimensional systems.
Define
$$
\C_+^X:=\{(\varphi_1,\varphi_2):\varphi_1(s)+\varphi_2(s)=X\text{ and }\varphi_1(s),\varphi_2(s)\geq 0\text{ for all }s\in[-r,0]\},
$$
$$\partial\C_+^X:=\{(\varphi_1,\varphi_2)\in\C_+^X: \norm{\varphi_1}=0\text{ or }\norm{\varphi_2}=0\},$$
$$\C_{+}^{X,\circ}:=\{(\varphi_1,\varphi_2)\in\C_+^X: \varphi_1(s), \varphi_2(s)>0\text{ for all }s\in[-r,0]\}.$$
In this case, it is clear that there are two invariant probability measures on the boundary $\partial \C_+^X$, which are $\bdelta_1$ and $\bdelta_2$ concentrating on $(X,0)$ and $(0,X)$, respectively, where $0,X$ are understood to be constant functions.
We have
$$\lambda_1(\bdelta_2)=f_1((0,X))-f_2((0,X))-\dfrac{\sigma_1^2+\sigma_2^2}2,$$
$$\lambda_2(\bdelta_1)=f_2((X,0))-f_1((X,0))-\dfrac{\sigma_1^2+\sigma_2^2}2.$$
Using
Theorem \ref{thm4.1}, and Theorem \ref{thm4.2}, we have the following classification for
the extinction
of \eqref{ex5-eq1}:
	If $\lambda_1(\bdelta_2)<0$ (resp., $\lambda_2(\bdelta_1)<0$), there is no invariant probability measure on $\C_+^{X,\circ}$. Moreover, $x_1(t)$ tends to 0 (resp., $x_2(t)$) almost surely.

To proceed, we consider \eqref{ex5-eq1} for three-dimensional systems.
We define the following sets
$$
\begin{aligned} 
\C_+^X:=\Big\{(\varphi_1,\varphi_2,\varphi_3)&:\varphi_1(s)+\varphi_2(s)+\varphi_3(s)=X\text{ and }\\
&\varphi_1(s),\varphi_2(s),\varphi_3(s)\geq 0\text{ for all }s\in[-r,0]\Big\},
\end{aligned}
$$
$$\partial\C_+^X:=\C_{12+}^X\cup\C_{23+}^X\cup\C_{13+}^X,$$
$$
\C_{ij+}^X:=\{(\varphi_1,\varphi_2,\varphi_3)\in\C_+^X: \norm{\varphi_k}=0,k\neq i,j\}, \text{ for }i\neq j\in\{1,2,3\},$$
$$\C_{+}^{X,\circ}:=\{(\varphi_1,\varphi_2,\varphi_3)\in\C_+^X: \varphi_1(s), \varphi_2(s), \varphi_3(s)>0\text{ for all }s\in[-r,0]\}.$$
Denote by $\bdelta_1,\bdelta_2,\bdelta_3$ the invariant probability measures on the boundary $\partial\C_+^X$ of \eqref{ex5-eq1}, concentrating on $(X,0,0)$, $(0,X,0)$, and $(0,0,X)$, respectively. We have
$$\lambda_i(\bdelta_1)=f_i((X,0,0))-f_1((X,0,0))
-\dfrac{\sigma_1^2+\sigma_i^2}2,i=2,3,$$
$$\lambda_j(\bdelta_2)=f_j((0,X,0))-f_2((0,X,0))
-\dfrac{\sigma_2^2+\sigma_j^2}2,j=1,3,$$
and
$$\lambda_k(\bdelta_3)=f_k((0,0,X))-f_3((0,0,X))
-\dfrac{\sigma_3^2+\sigma_k^2}2,k=1,2.$$
If $\max_{j=1,3}\lambda_j(\bdelta_2)>0$ and $\max_{k=1,2}\lambda_k(\bdelta_3)>0$,
there is a unique invariant probability measure on $\C_{23+}^X$, denoted by $\pi_{23}$.
If $\max_{j=1,3}\lambda_j(\bdelta_2)<0$ or $\max_{k=1,2}\lambda_k(\bdelta_3)<0$, the invariant probability measure on $\C_{23+}^X$ does not exist.
If $\pi_{23}$ exists, we have
$$
\begin{aligned} 
\lambda_1(\pi_{23})=-\dfrac{\sigma_1^2}2
+\int_{\C_{23+}^X}\Big(f_1(\Vphi)&-\dfrac{2X\varphi_2(0)
	f_2(\Vphi)+\sigma_2^2\varphi_2^2(0)}{X^2}
\\
&-\dfrac{2X\varphi_3(0)f_3(\Vphi)
	+\sigma_3^2\varphi_3^2(0)}{X^2}\Big)\pi_{23}(d\Vphi).
\end{aligned}
$$
By Lemma \ref{lm4.1} and $\lambda_2(\pi_{23})=\lambda_3(\pi_{23})=0$,
we have
\begin{align*}
\int_{\C_{23+}^X}&\left(\dfrac{2X\varphi_2(0)f_2(\Vphi)+\sigma_2^2\varphi_2^2(0)}{2X^2}+\dfrac{2X\varphi_3(0)f_3(\Vphi)+\sigma_3^2\varphi_3^2(0)}{2X^2}\right)\pi_{23}(d\Vphi)\\
&=\dfrac{\sigma_2^2}2+\int_{\C_{23+}^X}f_2(\Vphi)\pi_{23}(d\Vphi)\\
&=\dfrac{\sigma_3^2}2+\int_{\C_{23+}^X}f_3(\Vphi)\pi_{23}(d\Vphi).
\end{align*}
As a result, one has
\begin{align*}
\lambda_1(\pi_{23})&=-\dfrac{\sigma_1^2+\sigma_2^2}2
+\int_{\C_{23+}^X}\left(f_1(\Vphi)-f_2(\Vphi)\right)\pi_{23}(d\Vphi)\\
&=-\dfrac{\sigma_1^2+\sigma_3^2}2+\int_{\C_{23+}^X}\left(f_1(\Vphi)-f_3(\Vphi)\right)\pi_{23}(d\Vphi).
\end{align*}
The conditions to guarantee the existence of the unique invariant  probability measure $\pi_{12},\pi_{13}$ on the boundary $\C_{12+}^X,\C_{13+}^X$ are similarly obtained and
$\lambda_2(\pi_{13}),\lambda_3(\pi_{12})$ can be
computed similar to $\lambda_1(\pi_{23})$.
Therefore, we have the following classification for
the extinction of
solution of \eqref{ex5-eq1}.
For $l\in\{1,2,3\}$, $X_l(t)$ tends to $0$ almost surely exponentially fast if one of following conditions holds:
\begin{itemize}
	\item $\max_{i\neq l}\lambda_i(\bdelta_l)<0$,
	\item $\max_{i=2,3}\lambda_i(\bdelta_1)>0$, $\max_{j=1,3}\lambda_j(\bdelta_2)>0$, $\max_{k=1,2}\lambda_k(\bdelta_3)>0$ and $\lambda_l(\pi_{ij})<0$, $\{i,j,l\}=\{1,2,3\}.$
\end{itemize}
The explicit characterization of \eqref{ex5-eq1} in $n$-dimensional systems is more complex. However, our results (Theorem \ref{thm4.1}, and Theorem \ref{thm4.2}) as well as that of \cite{NNY21}
are applicable
under
suitable conditions.
Finally, it is worth noting that if $r=0$ (i.e., there is no time delay) and $f_i(\cdot)$, $i=1,\dots,n$ are linear, the characterization of the long-term behavior of \eqref{ex5-eq1} in this section is equivalent to the results in \cite{Imh09,Imh05}.

\subsection{Stochastic delay epidemic SIR model}\label{ex-SIR}
The epidemic SIR model is one of the basic building blocks of compartmental models, from which many
infectious disease
models are derived; and was first introduced by Kermack and
McKendrick in \cite{Ker27,Ker32},
and are deemed effective to depict the spread of many common diseases with permanent immunity such as rubella, whooping cough, measles, and smallpox.
The model consists of three compartments, $S$ (the number of susceptible), $I$ (the number of infectious), and $R$  (the number of recovered (or immune)).
Much
effort
has been devoted to
studying
the behavior of the SIR epidemic systems and its variants; 
see \cite{Die16,DDN19,DN17,DN18} and the references therein. In this subsection, we investigate the stochastic epidemic SIR model with  time delay.
First, we consider the equation with linear incidence rate of the following
form
\begin{equation}\label{ex3-eq1}
\begin{cases}
dS(t)=\left(a-b_1 S(t)-c_1 I(t)S(t)-c_2I(t)S(t-r)\right)dt+S(t)dE_1(t),\\[1ex]
dI(t)=\left(-b_2I(t)+c_1 I(t)S(t)+c_2I(t)S(t-r)\right)dt+I(t)dE_2(t),
\end{cases}
\end{equation}
where
$S(t)$ is the density of susceptible individuals,
$I(t)$ is the density of infected individuals,
$a>0$ is the recruitment rate of the population,
$b_i>0$, $i=1,2$ are the death rates,
$c_i>0$, $i=1,2$ are the incidence rates,
$r$ is the delayed time,
$(E_1(t),E_2(t))^\top=\Gamma^\top\vec B(t)$ with
$\vec B(t)=(B_1(t), B_2(t))^\top$ being a vector of independent standard Brownian motions, and
$\Gamma$ being a $2\times 2$ matrix such that
$\Gamma^\top\Gamma=(\sigma_{ij})_{2\times 2}$ is a positive definite matrix.
It is well-known that
the dynamics of
recovered individuals have no effect on the disease transmission dynamics and that is why we only consider the dynamics of $S(t),I(t)$ in \eqref{ex3-eq1}.

While the conditions for persistence of \eqref{ex3-eq1} were given in \cite[Section 5.4]{NNY21}, we develop the conditions for extinction here.
First, we consider the equation on the boundary $\{(\varphi_1,0):\varphi_1(s)\geq 0\;\forall s\in[-r,0]\}$ and let $\hat S(t)$ be the solution of the equation on this boundary as following
\begin{equation}\label{ex3-eq2}
d\hat S(t)=\left(a-b_1 \hat S(t)\right)dt+\hat S(t)dE_1(t).
\end{equation}
Since the drift coefficient of this equation is negative if $\hat S(t)$ is sufficiently large and positive, if $\hat S(t)$ is sufficiently small,
we can show that there is a unique invariant probability measure $\pi$ of \eqref{ex3-eq1} on $\C_{1+}^\circ:=\{(\varphi_1,0):\varphi_1(s)> 0\;\forall s\in[-r,0]\}$. On the other hand, since $\lambda_2(\bdelta^*)=-b_2-\dfrac{\sigma_{22}}2<0$, there is no invariant probability measure in $\C_{2+}^\circ:=\{(0,\varphi_2):\varphi_2(s)>0;\forall s\in [-r,0]\}$.
We define the following threshold
\begin{equation}\label{ex3-eq3}
\lambda(\pi)=-b_2-\frac {\sigma_{22}}2+\int_{\C_{1+}^\circ}\left(c_1\varphi_1(0)+c_2\varphi_1(-r)\right)\pi(d\Vphi),
\end{equation}
whose sign will be able to characterize the permanence and extinction.
As an application of Lemma \ref{lm4.1}, we get
\begin{equation}\label{ex3-eq4}
\int_{\C_{1+}^\circ}\varphi_1(0)\pi(d\Vphi)=\frac a{b_1}.
\end{equation}
By \eqref{ex-eq00}, we have that
$$
\int_{\C_{1+}^\circ}\varphi_1(-r)\pi(d\Vphi)=\int_{\C_{1+}^\circ}\varphi_1(0)\pi(d\Vphi)=\dfrac{a}{b_1}.
$$
Therefore, under this condition, we obtain from \eqref{ex3-eq3} and \eqref{ex3-eq4} that
\begin{equation*}
\lambda(\pi)=-b_2-\frac {\sigma_{22}}2+\dfrac{a(c_1+c_2)}{b_1}.
\end{equation*}
Using the same idea and technique, it is possible to obtain similar results of Theorem \ref{thm4.1}, and Theorem \ref{thm4.2}.
Therefore, we have the following classifications:
\begin{itemize}
	\item If $\lambda(\pi)<0$, $I(t)$ converges to $0$ almost surely with exponential rate $\lambda(\pi)$ while $S(t)$ tends to $\hat S(t)$.
	\item If $\lambda(\pi)>0$, \eqref{ex3-eq1} has a unique invariant probability measure in $\C_+^\circ$ (follows the first part \cite[Section 5.4]{NNY21}).
\end{itemize}
This characterization is equivalent to the result in \cite{QLiu16-1,QLiu16}.

In the above, we consider the linear incidence to make our computations be more explicit. The characterizations still hold for the following stochastic delay SIR epidemic model with more general incidence rate
\begin{equation}\label{ex3-eq5}
\begin{cases}
dS(t)=\left(a-b_1 S(t)-I(t)f_1\big(S(t),S(t-r),I(t),I(t-r)\big)\right)dt+S(t)dE_1(t),\\[1ex]
dI(t)=\left(-b_2I(t)+I(t)f_2\big(S(t),S(t-r),I(t),I(t-r)\big)\right)dt+I(t)dE_2(t),
\end{cases}
\end{equation}
where
$f_i:\R^4\to \R, i=1,2$ are the incidence functions satisfying
\begin{itemize}
	\item $f_1(0,0,i_1,i_2)=f_2(0,0,i_1,i_2)=0$.
	\item there exists some $\kappa\in (0,\infty)$ such that for all $\Vphi\in\C_+$
	$$
	\begin{aligned} f_2\big(\varphi_1(0),&\varphi_1(-r),\varphi_2(0),\varphi_2(-r)\big)\leq \kappa f_1\big(\varphi_1(0),\varphi_1(-r),\varphi_2(0),\varphi_2(-r)\big)\\
	&\leq\kappa^2\left(1+\abs{\Vphi(0)}+\abs{\Vphi(-r)}\right).
	\end{aligned}
	$$
	\item $f_2(s_1,s_2,i_1,i_2)$ is non-decreasing in $s_1,s_2$ and is non-increasing in $i_1,i_2$.
\end{itemize}
It is important to mention that our conditions are verified by almost all incidence functions used in the literature, including linear functional response, Holling type II functional response, Beddington-DeAngelis functional response, etc.
In the general case, the long-run behavior is almost completely characterized the
same as the case of linear incidence rate by the threshold $\lambda(\pi)$ given by
$$
\lambda(\pi)=-b_2-\frac {\sigma_{22}}2+\int_{\C_{1+}^\circ}f_2\big(\varphi_1(0),\varphi_1(-r),\varphi_2(0),\varphi_2(-r)\big)\pi(d\Vphi),
$$
where
$\Vphi=(\varphi_1,\varphi_2)$ and $\pi$ is the invariant probability measure of \eqref{ex3-eq2}. These results significantly generalize and improve that of
\cite{Che09,QLiu16-2,Mah17}.

\subsection{Stochastic delay chemostat model}\label{ex-che}
Chemostat models,
introduced by Novick and
Szilard in \cite{Nov50},
play an important
role in microbiology, biotechnology, and
population biology.
 A chemostat is a bio-reactor, in which
 fresh medium is continuously added, and culture liquid containing left-over nutrients, metabolic end products, and microorganisms are continuously removed at the same rate to keep a constant culture volume.

This
section studies a model of  $n$-microbial populations competing for a single nutrient in a chemostat, in which we take both
the delayed times
and
the white noises
into consideration. Consider
the following system of stochastic functional differential equations
\begin{equation}\label{ex4-eq1}
\begin{cases}
dS(t)=\bigg(1-S(t)+aS(t-r)-\displaystyle\sum_{i=1}^nx_i(t)p_i(S(t))\bigg)dt+S(t)dE_0(t),\\[2ex]
dx_i(t)=x_i(t)\left(p_i(S(t-r))-1\right)dt+x_i(t)dE_i(t),\;i=1,\dots,n,
\end{cases}
\end{equation}
where
$S(t)$ is the concentration of nutrient at time $t$;
$0\leq a< 1$ is a constant;
$x_i(t),i=1,\dots,n$ are the concentrations of the competing microbial populations;
$p_i(S),i=1,\dots,n$ are the density-dependent uptakes of nutrient by population $x_i$;
$r$ is the delayed time; and
$(E_0(t),\dots,E_n(t))^\top=\Gamma^\top\vec B(t)$ with
$\vec B(t)=(B_0(t),\dots, B_n(t))^\top$ being a vector of independent standard Brownian motions and
$\Gamma$ being a $(n+1)\times (n+1)$ matrix such that
$\Gamma^\top\Gamma=(\sigma_{ij})_{(n+1)\times (n+1)}$ is a positive definite matrix. Moreover,
$\C:=\C([-r,0],\R^{n+1})$ instead of $\C([-r,0],\R^n)$.
While the deterministic version of \eqref{ex4-eq1} was studied with the long-time behavior characterized in \cite{Ell94,Fre89,Wol97},
recent
attention on the stochastic counterpart can be found in
 \cite{Sun18,Sun18-1,Zha19}.

Similar to Section \ref{ex-SIR} as well as \cite[Section 5.5]{NNY21}, if we assume that $p_i:\R\to\R, i=1,\dots,n$ satisfying non-decreasing and bounded properties and $p_i(0)=0$, then our Assumptions hold. Therefore, our results in this paper can be applied to \eqref{ex4-eq1}.
Before
considering the higher dimensional systems,
we consider $n=1$ and $2$.
If $n=1$, there is only one population $x_1$ together with the nutrient $S(t)$.
Similar to Section \ref{ex-SIR}, there is no invariant probability measure of $(S_t,x_{1t})$ in
$\C_{1+}^\circ:=\{(0,\varphi_1)\in\C_+:\varphi_1(s)>0,\forall s\in [-r,0]\},$
where $x_{1t}$ is the memory segment function of $x_1(t)$.
Moreover, there is a unique invariant probability measure $\pi_0$ in $\C_{0+}^\circ:=\{(\varphi_0,0)\in\C_+:\varphi_0(s)>0,\forall s\in [-r,0]\}$.
Hence, it is easy to see that for any invariant probability measure $\pi$ in $\partial\C_+$, we have
\begin{equation*}
\lambda_1(\pi)=\lambda_1(\pi_0)=-1-\dfrac {\sigma_{11}}2+\int_{\C_{0+}^\circ}p_1(\varphi_0(-r))\pi_0(d\Vphi).
\end{equation*}
Therefore, our results
yield the following classification.
\begin{itemize}
	\item If $\lambda_1(\pi_0)>0$ then $(S_t,x_{1t})$ admits a unique invariant probability measure in $\C_+^\circ$; followed by \cite[Section 5.5]{NNY21}.
	\item If $\lambda_1(\pi_0)<0$ then $x_1(t)$ tends to 0 almost surely with exponential rate $\lambda_1(\pi_0)$ while $S(t)$ tends to $\hat S(t)$, where $\hat S(t)$ is the solution of
	$$
	d\hat S(t)=\left(1-\hat S(t)+a\hat S(t-r)\right)dt+\hat S(t)dE_0(t).
	$$
\end{itemize}
To proceed, we study the characterization of the longtime behavior in the case $n=2$.
Similar to the case of $n=1$,
there is no invariant probability measure in
$\C_{i+}^\circ:=\{(0,\varphi_1,\varphi_2)\in\C_+:\norm{\varphi_j}=0, j\neq i \text{ and } \varphi_i(s)>0,\forall s\in [-r,0]\},$
and there is a unique measure $\pi_0$ in
$\C_{0+}^\circ:=\{(\varphi_0,0,0)\in\C_+:\varphi_0(s)>0,\forall s\in [-r,0]\}$.
If $\lambda_i(\pi_0)> 0$, where
\begin{equation*}
\lambda_i(\pi_0)=-1-\dfrac {\sigma_{ii}}2+\int_{\C_{0+}^\circ}p_i(\varphi_0(-r))\pi_0(d\Vphi),\;i=1,2,
\end{equation*}
then there is a unique invariant probability measure $\pi_{0i}$ in
$C_{0i+}^\circ:=\{(\varphi_0,\varphi_1,\varphi_2)\in\C_+:\norm{\varphi_j}=0, j\neq i \text{ and } \varphi_0(s), \varphi_i(s)>0,\forall s\in [-r,0]\}$. Hence, let
\begin{equation*}
\lambda_j(\pi_{0i})=-1-\dfrac {\sigma_{jj}}2+\int_{\C_{0+}^\circ}p_j(\varphi_0(-r))\pi_{0i}(d\Vphi),\;j\neq i.
\end{equation*}
The extinction is classified as follows.
\begin{itemize}
	\item If $\lambda_1(\pi_0)<0$, $\lambda_2(\pi_0)<0$ then $x_1(t),x_2(t)$ tend to 0 almost surely with exponential rate $\lambda_1(\pi_0)$, $\lambda_2(\pi_0)$, respectively, while $S(t)$ tends to $\hat S(t)$, where $\hat S(t)$ is the solution of
	$$
	d\hat S(t)=\left(1-\hat S(t)+a\hat S(t-r)\right)dt+\hat S(t)dE_0(t).
	$$
	\item If $\lambda_i(\pi_0)>0$, $\lambda_j(\pi_0)<0$, $\lambda_j(\pi_{0i})<0$, $i\neq j\in\{1,2\}$ then $x_j(t)$ converges to 0 almost surely with exponential rate $\lambda_j(\pi_{0i})$ and the random occupation measure converges to $\pi_{0i}$.
	\item If $\lambda_1(\pi_0)>0$, $\lambda_2(\pi_0)>0$, $\lambda_i(\pi_{0j})>0$, $\lambda_j(\pi_{0i})<0$, $i\neq j\in\{1,2\}$ then $x_j(t)$ converges to 0 almost surely with exponential rate $\lambda_j(\pi_{0i})$ and the random occupation measure converges to $\pi_{0i}$.
	\item If $\lambda_1(\pi_0)>0$, $\lambda_2(\pi_0)>0$, $\lambda_1(\pi_{02})<0$, $\lambda_2(\pi_{01})<0$ then $q_i>0,i=1,2$ and $q_1+q_2=1$ where
	$$q_i=\PPphi\left\{\U(\omega)=\{\pi_{0i}\}\,\text{ and }\,\lim_{t\to\infty}\dfrac{\ln X_i(t)}t=\lambda_i(\pi_{1j}), i\in\{1,2\}\setminus\{j\}\right\}.$$
\end{itemize}
On the other hand, combining this section and \cite[Section 5.5]{NNY21} leads to that $(S_t,x_{1t},x_{2t})$ admits a unique invariant probability measure in $\C_+^\circ$
if one of following conditions holds
\begin{itemize}
	\item $\lambda_1(\pi_0)>0$, $\lambda_2(\pi_0)<0$, $\lambda_2(\pi_{01})>0$.
	\item  $\lambda_1(\pi_0)<0$, $\lambda_2(\pi_0)>0$, $\lambda_1(\pi_{02})>0$.
	\item  $\lambda_1(\pi_0)>0$, $\lambda_2(\pi_0)>0$, $\lambda_1(\pi_{02})>0$, $\lambda_2(\pi_{01})>0$.
\end{itemize}

For higher dimensional systems,
although, it is somewhat difficult to show concretely in case of general functions $p_i(\cdot)$,
it
is computable in certain
cases.
These classifications improve the results in \cite{Sun18,Zha19}.

\begin{rem}{\rm
		In Sections  \ref{subsec:1}-\ref{ex-che},
		to present
		the main ideas without notation complication
		we used a single delay.
		However,
		the results for models with multi-delays or distributed delays
		can be obtained similarly.
		On the other hand, if $r=0$, i.e., there is no time delay, the above results are consistent with and/or even improve the existing results in the literature.
}\end{rem}

\end{document}